\documentclass{amsart}

\usepackage{amssymb,amsmath, amsthm, amsfonts,comment}
\usepackage{bbm}
\usepackage{mathrsfs}
\usepackage[ruled,linesnumbered]{algorithm2e}

\usepackage[margin=1in]{geometry}
\usepackage{graphicx, subcaption}
\usepackage{xcolor}
\usepackage{mathabx} % \smalltriangleup
\usepackage{bbm}    % mathbb for lowercase
\usepackage{tikz-cd,pgfplots}

\usepackage[doi=false,isbn=false,url=false,style=alphabetic,maxalphanames=99, maxbibnames=99]{biblatex}
\addbibresource{references_zotero.bib}

\setcounter{MaxMatrixCols}{20}

\def\spann{\mathop{\rm span}\nolimits}

\def\tr{\mathop{\rm trace}}

\DeclareMathOperator{\fold}{fold}
\def\diag{\mathop{\rm diag}}
\def\bdiag{\mathop{\rm bdiag}}

\renewcommand{\vec}{\mathop{\rm{vec}}}

\newcommand{\bbN}{\mathbb{N}}

\newcommand{\bbR}{\mathbb{R}}
\newcommand{\bbC}{\mathbb{C}}

\newcommand{\bbF}{\mathbb{F}}

\newcommand{\bbS}{\mathbb{S}}

\newcommand{\cA}{\mathcal{A}}
\newcommand{\cB}{\mathcal{B}}
\newcommand{\cC}{\mathcal{C}}
\newcommand{\cD}{\mathcal{D}}

\newcommand{\cU}{\mathcal{U}}
\newcommand{\cV}{\mathcal{V}}

\newcommand{\cX}{\mathcal{X}}
\newcommand{\cY}{\mathcal{Y}}
\newcommand{\cS}{\mathcal{S}}

\newcommand{\cQ}{\mathcal{Q}}
\newcommand{\cI}{\mathcal{I}}

\newcommand{\cW}{\mathcal{W}}

\newcommand{\ba}{\mathbf{a}}
\newcommand{\bb}{\mathbf{b}}

\newcommand{\bq}{\mathbf{q}}

\newcommand{\bx}{\mathbf{x}}

\usepackage{thmtools, thm-restate}
\declaretheorem[numberwithin=section]{theorem}

\newtheorem{corollary}[theorem]{Corollary}
\newtheorem{lemma}[theorem]{Lemma}
\newtheorem{proposition}[theorem]{Proposition}
\newtheorem{defn}[theorem]{Definition}
\theoremstyle{definition}
\newtheorem{example}{Example}[section]

\newtheorem{remark}{Remark}[section]

%For comments

\newcommand{\starM}{\star_{M}}

\usepackage{hyperref}
\hypersetup{
    colorlinks=true,
    linkcolor = blue,
    citecolor = blue,
    urlcolor = blue
}

\usepackage{cleveref}

% fonts
\def\mydefb#1{\expandafter\def\csname bf#1\endcsname{\mathbf{#1}}}
\def\mydefallb#1{\ifx#1\mydefallb\else\mydefb#1\expandafter\mydefallb\fi}
\mydefallb aAbBcCdDeEfFgGhHiIjJkKlLmMnNoOpPqQrRsStTuUvVwWxXyYzZ\mydefallb

\def\mydefb#1{\expandafter\def\csname #1bb\endcsname{\mathbb{#1}}}
\def\mydefallb#1{\ifx#1\mydefallb\else\mydefb#1\expandafter\mydefallb\fi}
\mydefallb aAbBcCdDeEfFgGhHiIjJkKlLmMnNoOpPqQrRsStTuUvVwWxXyYzZ\mydefallb

\def\mydefb#1{\expandafter\def\csname #1cal\endcsname{\mathcal{#1}}}
\def\mydefallb#1{\ifx#1\mydefallb\else\mydefb#1\expandafter\mydefallb\fi}
\mydefallb aAbBcCdDeEfFgGhHiIjJkKlLmMnNoOpPqQrRsStTuUvVwWxXyYzZ\mydefallb

% math bold font for greek letters
\def\mydefgreek#1{\expandafter\def\csname bf#1\endcsname{\text{\boldmath$\mathbf{\csname #1\endcsname}$}}}
\def\mydefallgreek#1{\ifx\mydefallgreek#1\else\mydefgreek{#1}%
   \lowercase{\mydefgreek{#1}}\expandafter\mydefallgreek\fi}
\mydefallgreek {alpha}{Alpha}{beta}{Beta}{gamma}{Gamma}{delta}{Delta}{epsilon}{Epsilon}{zeta}{Zeta}{eta}{Eta}{theta}{Theta}{iota}{Iota}{kappa}{Kappa}{lambda}{Lambda}{mu}{Mu}{nu}{Nu}{omicron}{Omicron}{pi}{Pi}{rho}{Rho}{sigma}{Sigma}{tau}{Tau}{upsilon}{Upsilon}{phi}{Phi}{xi}{Xi}{chi}{Chi}{psi}{Psi}{omega}{Omega}\mydefallgreek

\newcommand{\Matlab}{{\sc Matlab}}

\DeclareMathOperator{\subjectto}{s.t.}
\DeclareMathOperator{\tube}{tube}
\DeclareMathOperator{\mycirc}{circ}
\DeclareMathOperator{\myvec}{vec}

\DeclareMathOperator{\mytrace}{trace}
\DeclareMathOperator{\mydet}{det}
\DeclareMathOperator{\GL}{GL}
\DeclareMathOperator{\Dgroup}{D}

\DeclareMathOperator{\Ogroup}{O}
\DeclareMathOperator{\myPSD}{PSD}
\DeclareMathOperator{\mymat}{mat}

\DeclareMathOperator{\starMrank}{\starM-rank}
\DeclareMathOperator{\sgn}{sgn}

\definecolor{EmoryBlue}{RGB}{1, 33, 105} 
\definecolor{EmoryDarkBlue}{RGB}{12, 35, 64} 
\definecolor{EmoryMediumBlue}{RGB}{0, 51, 160} 
\definecolor{EmoryLightBlue}{RGB}{0, 125, 186} 
\definecolor{EmoryYellow}{RGB}{242, 169, 0} 
\definecolor{EmoryGold}{RGB}{181, 133, 0} 
\definecolor{EmoryMetallicGold}{RGB}{132, 117, 78} 

\definecolor{TuftsBlue}{RGB}{49,114,174}
\definecolor{TuftsBrown}{RGB}{94,75,60}

\definecolor{mycolor0}{rgb}{1, 1, 1}%
\definecolor{mycolor1}{rgb}{0.00000,0.44700,0.74100}%
\definecolor{mycolor2}{rgb}{0.85000,0.32500,0.09800}%
\definecolor{mycolor3}{rgb}{0.92900,0.69400,0.12500}%
\definecolor{mycolor4}{rgb}{0.49400,0.18400,0.55600}%
\definecolor{mycolor5}{rgb}{0.46600,0.67400,0.18800}%
\definecolor{mycolor6}{rgb}{0.30100,0.74500,0.93300}%

\definecolor{jet1}{rgb}{0.0000,0.0000,0.6667}
\definecolor{jet2}{rgb}{0.0000,0.0000,1.0000}
\definecolor{jet3}{rgb}{0.0000,0.3333,1.0000}
\definecolor{jet4}{rgb}{0.0000,0.6667,1.0000}
\definecolor{jet5}{rgb}{0.0000,1.0000,1.0000}
\definecolor{jet6}{rgb}{0.3333,1.0000,0.6667}
\definecolor{jet7}{rgb}{0.6667,1.0000,0.3333}
\definecolor{jet8}{rgb}{1.0000,1.0000,0.0000}
\definecolor{jet9}{rgb}{1.0000,0.6667,0.0000}
\definecolor{jet10}{rgb}{1.0000,0.3333,0.0000}
\definecolor{jet11}{rgb}{1.0000,0.0000,0.0000}

\usepackage[most]{tcolorbox}
\usepackage{cleveref}
% \tcbuselibrary{theorems}

\makeatletter
%\crefname{tcb@cnt@mytheo}{theorem}{theorem}
%\Crefname{tcb@cnt@mytheo}{Theorem}{Theorem}
\crefformat{tcb@cnt@mytheo}{theorem~#2#1#3}
\Crefformat{tcb@cnt@mytheo}{Theorem~#2#1#3}
\crefformat{tcb@cnt@mylemma}{lemma~#2#1#3}
\Crefformat{tcb@cnt@mylemma}{Lemma~#2#1#3}
\crefformat{tcb@cnt@myproto}{prototype problem~#2#1#3}
\Crefformat{tcb@cnt@myproto}{Prototype Problem~#2#1#3}
\crefformat{tcb@cnt@myaction}{action items~#2#1#3}
\Crefformat{tcb@cnt@myaction}{Action Items~#2#1#3}
%\crefname{tcb@cnt@mylemma}{lemma}{Lemma}
%\Crefname{tcb@cnt@mylemma}{lemma}{Lemma}
\makeatother

\newtcbtheorem[auto counter, number within=section]{mytheo}{Theorem}%
{colback=EmoryBlue!5,colframe=EmoryBlue, fonttitle=\bfseries}{thm}

\newtcbtheorem[auto counter, number within=section]{mylemma}{Lemma}%
{colback=gray!5,colframe=gray, fonttitle=\bfseries}{lem}

\newtcbtheorem[auto counter, number within=section]{myproto}{Problem}%
{colback=EmoryGold!5,colframe=EmoryGold, fonttitle=\bfseries}{proto}

\newtcbtheorem[auto counter, number within=section]{myaction}{Action Iterms}%
{colback=gray!5,colframe=gray, fonttitle=\bfseries}{action}

\title{Tensor-Tensor Products, Group Representations, and Semidefinite Programming}
\author{Alex Dunbar$^{*,1,2}$ \and Elizabeth Newman$^{\dagger,1,3}$}
\date{%
    \today\\
    $^*$\href{mailto:alex.dunbar@emory.edu}{alex.dunbar@emory.edu}\\
    $^\dagger$\href{mailto:elizabeth.newman@emory.edu}{elizabeth.newman@emory.edu}, \href{mailto:e.newman@tufts.edu}{e.newman@tufts.edu}, \\
    $^1$Emory University, Atlanta, GA 30322\\%
    $^2$Georgia Institute of Technology, Atlanta, GA 30332\\
    $^3$Tufts University, Medford, MA, 02155
}

\begin{document}

\begin{abstract}
The $\starM$-family of tensor-tensor products is a framework which generalizes many properties from linear algebra to third order tensors. Here, we investigate positive semidefiniteness and semidefinite programming under the $\starM$-product. Critical to our investigation is a connection between the choice of matrix $M$ in the $\starM$-product and the representation theory of an underlying group action. Using this framework, third order tensors equipped with the $\starM$-product are a natural setting for the study of invariant semidefinite programs. As applications of the $M$-SDP framework, we provide a characterization of certain nonnegative quadratic forms and solve low-rank tensor completion problems.
\end{abstract}

\maketitle

\section{Introduction}\label{sec:introduction}

Multiway operators or \emph{tensors} are central objects in many fields of pure and applied mathematics, including data science applications~\cite{goos_multilinear_2002, frolov_tensor_2017}, multidimensional scientific simulations~\cite{ballester-ripoll_tthresh_2020}, and tensor fields (i.e., multiway-array-valued functions) for Riemannian manifolds~\cite{dodson_tensor_1991} and mechanical stress tensors~\cite{irgens_continuum_2008}. 
The breadth of multilinear applications has generated numerous factorization strategies to extract multidimensional features and operate in high dimensions; see, e.g.,~\cite{ballard_tensor_2025, kolda_tensor_2009} for comprehensive reviews.  
Three foundational tensor factorizations that emulate properties of the matrix singular value decomposition (SVD)~\cite{golub_matrix_2013} are Canonical Polyadic (CP)~\cite{hitchcock_expression_1927, harshman_foundations_1970, carroll_analysis_1970}, higher-order SVD (HOSVD)~\cite{tucker_mathematical_1966, de_lathauwer_multilinear_2000}, and tensor train~\cite{oseledets_tensor-train_2011}. 
Each tensor decomposition brings distinct advantages and algebraic consequences. 
The CP format offers uniqueness and interpretability of the extracted features, but common linear algebra computations become NP-hard (e.g., determining rank and eigenvalues) \cite{hillar_most_2013}. 
The HOSVD excels at data compression, but lacks Schmidt-Eckart-Young-Mirsky-like guarantees for optimal low-rank approximations~\cite{schmidt_zur_1907, eckart_approximation_1936, mirsky_symmetric_1960}. 
The tensor train format is designed to approximate high-dimensional operators arising in quantum chemistry~\cite{orus_practical_2014} and high-dimensional partial differential equations~\cite{bachmayr_low-rank_2023}, but is often viewed as a powerful and strategic computational tool rather than analyzed from a linear algebraic perspective. 
The algebraic sacrifices of these classical tensor decompositions can make revealing underlying symmetries, invariants, and other geometric structures challenging in high dimensions and potentially eliminate the opportunity to exploit such structures for efficient algorithm development.

Recent work has introduced a family of tensor operators, called the $\starM$-product (the prefix is pronounced ``star-M''), that extends traditional matrix-matrix multiplication to multidimensional arrays~\cite{kernfeld_tensortensor_2015, kilmer_third-order_2013, kilmer_factorization_2011}.  
This tensor operation gives rise to provably optimal multiway data representations~\cite{kilmer_tensor-tensor_2021} and has demonstrated empirical success across applications in imaging~\cite{newman_nonnegative_2020, soltani_tensor-based_2016}, tensor completion~\cite{zhang_exact_2017, kong_tensor_2021}, and machine learning~\cite{newman_stable_2024}.   
The key perspective of the $\starM$-product is to consider an order-three tensor to be a two-dimensional matrix with vector-valued entries. 
The $\starM$-product thereby multiplies two tensors by replacing the underlying notion of scalar multiplication with a vector-vector product defined by an invertible matrix, $M$. 
The original $t$-product~\cite{kilmer_factorization_2011} considered $M$ as a discrete Fourier transform matrix, giving rise to analysis of numerical algorithms under the algebra of circulants~\cite{kilmer_third-order_2013, gleich_power_2013}. 
Connections between the $\starM$-product and other tensor factorization strategies were introduced in~\cite{kilmer_tensor-tensor_2021}. 

A strength of the $\starM$-product framework is that it allows for tensor properties and tensor decompositions which are analogous to familiar matrix decompositions in numerical linear algebra, such as the existence of a singular value decomposition ~\cite{kilmer_factorization_2011, kilmer_tensor-tensor_2021}. One matrix property which is widely applicable in both pure and applied mathematics is positive semidefiniteness. In convex optimization, positive semidefiniteness is used to define \emph{semidefinite programming problems}--optimization problems whose decision variables are positive semidefinite matrices. Semidefinite programming is well-studied; see e.g., \cite{vandenberghe_semidefinite_1996}, and has applications in polynomial optimization \cite{lasserre_global_2001, parrilo_semidefinite_2003}, relaxations of integer programs \cite{goemans_improved_1995}, and applied algebraic geometry \cite{blekherman_semidefinite_2012} among others. Semidefiniteness has been studied in the case of the original $t$-product \cite{kilmer_factorization_2011} in \cite{zheng_t-positive_2021, zheng_unconstrained_2022, marumo_t-semidefinite_2024}, but not for the general $\starM$-product. 

\subsection{Our Contributions}

In this paper, we develop the theory of $\starM$-semidefinite tensors and corresponding optimization problems. As is common in the $\starM$-product literature, the condition of positive semidefiniteness can be checked on slices in the transform domain; that is, the tensor obtained by changing basis on the space of tubes according to the matrix $M$. Viewing this as a block diagonalization result, we make a connection to group invariant semidefinite programs, which requires a representation-theoretic perspective on the $\starM$-product. Thus far, the mathematics behind the $\starM$-product has been primarily discussed in the language of matrix algebras, which has enabled elegant extensions of numerical linear algebra tools and algorithms to multilinear settings. 
Our primary achievement is providing the first unified view of the $\starM$-product from algebraic and geometric perspectives which gives interpretability to the matrix $M$. 
Notably, we completely characterize the geometric space of tensors which represent group equivariant transformations under the $\starM$-product for a given matrix $M$. 
Group equivariance, the property that a group action on a function input is equivalent to a group action on a function output, is a symmetry-preserving property that has wide utility in statistical inference~\cite{lehmann_theory_1998}, differential equations~\cite{olver_group-invariant_1987},  and machine learning~\cite{bronstein_geometric_2021, lim_what_2023, celledoni_equivariant_2021, maile_equivariance-aware_2022}. 
With a group representation interpretation, we are able to exploit group-theoretic tools on tensor-tensor products. 

We briefly highlight key results presented in our paper. 

\begin{itemize}
    
    \item We introduce $\starM$-semidefinite programming, the first use of a general $\starM$-product for this application. 
    In the process, we provide rigorous theoretical analysis of $\starM$-semidefinite tensors, extending numerous classic equivalent definitions to the tensor case. 

    \item We connect $\starM$-semidefinite programming to group invariant semidefinite programs. In the process, we present new algebraic and geometric insight into the $\starM$-product from a representation theoretic perspective. Specifically, we highlight the importance of the choice of matrix $M$ via an interpretation through Schur's Lemma.
    
    \item To complement the new theoretical advancements and demonstrate the practicality of our approach, we provide numerical experiments of solving $\starM$-semidefinite programs on both video data and hyperspectral data.  
    Code to reproduce all experiments is provided publicly at \url{https://github.com/elizabethnewman/tsdp}. 

\end{itemize}

\subsection{Related Work}

The most closely related work is the series of articles \cite{zheng_t-positive_2021, zheng_unconstrained_2022}, where the authors study semidefinite tensors with the $t$-product and apply such tensors to the study of polynomial optimization problems. An alternative (equivalent) definition of $t$-positive semidefinite tensors was given in \cite[Definition 2.13]{kilmer_third-order_2013} where such tensors were used for numerical algorithms. The use of  $t$-semidefinite programming was extended to the constrained polynomial optimization case in \cite{marumo_t-semidefinite_2024}. The study of $t$-PSD tensors has been further developed in  \cite{ju_geometric_2024}, which studies the geometry of the set of $t$-PSD tensors, and \cite{qi_t-quadratic_2021}, which studies quadratic forms. 

The theory of invariant semidefinite programs and their application to sums of squares polynomials was initiated in \cite{gatermann_symmetry_2004}. Theoretical and computational extensions of this framework have since been substaintially developed; see for example \cite{de_klerk_exploiting_2010, lofberg_pre-_2009, anjos_invariant_2012, riener_exploiting_2013}. 

Nuclear-norm minimization approaches to tensor completion which utilize tensor-tensor products have been studied in \cite{semerci_tensor-based_2014, zhang_corrected_2019, qi_tensor_2017, zhou_tensor_2018, zhang_exact_2017}. A semidefinite programing formulation of the nuclear norm problem in the matrix case was developed in \cite{recht_guaranteed_2010}.

Finally, representation theoretic ideas have been used in numerical multilinear algebra in ~\cite{sprangers_group-invariant_2023}, where the authors study tensor-train decompositions for tensors which represent group invariant multilinear maps. Representation theoretic tools have also seen increased attention in computational and data sciences; see e.g., \cite{bronstein_geometric_2021} for an overview. 

\subsection{Organization} The remainder of the paper is organized as follows. Section \ref{sec:notation} fixes notation and provides preliminary results for tensors and tensor-tensor products. In Section \ref{sec:SemidefiniteTensors}, we develop a theory of $\starM$-PSD tensors and $M$-semidefinite programs. In Section \ref{sec:Equivariance}, we prove group equivariance properties for the $\starM$-product and connect $\starM$-PSD tensors to invariant SDPs. Sections \ref{sec:MSOS} and \ref{sec:TensorCompletion} provide applications to sums of squares polynomials and low-rank tensor completion problems, respectively. We conclude in Section \ref{sec:Completion_Experiments} with numerical experiments.   

\section{Notation and Background}\label{sec:notation}

This paper focuses on order-$3$ tensors, which we denote with calligraphic letters, e.g., $\cA \in \bbF^{n_1\times n_2\times n_3}$ where the field $\bbF$ is either the real numbers $\bbR$ or the complex numbers $\bbC$. 
We use \Matlab\ indexing notation to select different parts of a tensor; e.g., $\cA_{:,:,k}$ or $\cA(:,:,k)$ is the $k$-th frontal slice of the tensor, a matrix of size $n_1\times n_2$.   
We call tensors of form $1 \times 1 \times n_3$ for some $n_3 \geq 2$ \emph{tubes} and denote tubes with lowercase bold letters, e.g., $\bfa \in \Fbb^{1\times 1 \times n_3}$.  
From this terminology, we can view a tensor $\cA \in \bbF^{n_1\times n_2\times n_3}$ as an $n_1\times n_2$ matrix where each entry is a tube; that is, $\cA_{i,j,:} = \bfa_{ij} \in \Fbb^{1\times 1\times n_3}$ for $i\in [n_1]$ and $j\in [n_2]$. 
The bracket notation is defined to be $[m] = \{1,2,\ldots, m\}$ for any positive integer $m$. 
See~\Cref{fig:tensor_notation} for an illustration of the tensor notation.

\begin{figure}
    \centering
    \begin{subfigure}{0.4\linewidth}
    \centering
    \includegraphics[width=0.7\linewidth]{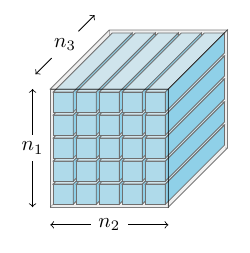}
    \subcaption{Matrix of tubes $\cA\in \Fbb_{M}^{n_1\times n_2}$}
    \end{subfigure}
    \begin{subfigure}{0.4\linewidth}
    \centering
    \includegraphics[width=0.7\linewidth]{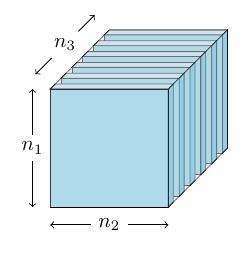}
    \subcaption{Frontal slices  $\cA_{:,:,k}\in \Fbb^{n_1\times n_2}$ for $k\in [n_3]$}
    \end{subfigure}
    
    \caption{Illustration of tensor notation for $M\in \GL_{n_3}(\bbF)$.}
    \label{fig:tensor_notation}
\end{figure}

\subsubsection{Vectorization} To avoid ambiguity with tubes, we denote column vectors with non-bold lowercase letters; e.g., $a\in \Fbb^{n_3}$. 
To map between tubes and vectors, we define two isomorphisms, $\tube:\bbF^{n_3} \to \bbF^{1\times 1 \times n_3}$ and $\vec:\bbF^{1\times 1 \times n_3} \to \bbF^{n_3}$, defined in coordinates by 
\begin{align}\label{eq:tubevec}
    \tube(x)_{1,1,k} = x_k \qquad \text{and} \qquad \vec(\bx)_k = \bx_{1,1,k} \qquad \text{for } k\in [n_3].
\end{align}
For ease of notation, we will sometimes write $\bx \equiv x$ if $\bx = \tube(x)$. 

\subsubsection{Tubal Transformations} To apply linear transformations along the tubes of a tensor, we use the mode-$3$ product, $\times_3: \Fbb^{n_1\times n_2\times n_3} \times \Fbb^{p\times n_3} \to \Fbb^{n_1\times n_2 \times p}$, denoted and defined as
    \begin{align}\label{eq:mode3}
        (\cA \times_3 M)_{i,j,:} = \tube(M\vec(\bfa_{ij}))
    \end{align}
for $i\in [n_1]$ and $j\in [n_2]$. 
When the matrix $M$ is clear from context, we denote $\widehat{\cA} = \cA \times_3 M$ and refer to $\widehat{\cA}$ as the image of the tensor $\cA$ in the \emph{transform domain}.  

\subsubsection{Facewise Operations} We often will discuss the structure of a tensor and implement algorithms facewise (i.e., by properties or actions of the frontal slices). 
A tensor $\cD\in \Fbb^{n_1 \times n_2 \times n_3}$ is \emph{facewise diagonal} or \emph{f-diagonal} if each frontal slice is a diagonal matrix; i.e., for each $k \in [n_3]$, the matrix $\cD_{:,:,k}\in \Dgroup_{n_1,n_2}(\Fbb)$ where $\Dgroup_{n_1,n_2}(\Fbb)$ denotes the set of $n_1\times n_2$ diagonal matrices over $\Fbb$. 
If $n = n_1 = n_2$, we write the set of diagonal matrices as $\Dgroup_n(\Fbb)$. 
The facewise identity tensor $\cI\in \Fbb^{n\times n\times n_3}$ is an f-diagonal tensor where each frontal slice is the $n\times n$ identity matrix; i.e., $\cI_{:,:,k} = I_n$ for all $k\in [n_3]$. 
We can multiply two tensors facewise by multiplying corresponding frontal slices. 
Specifically, the \emph{facewise product}, $\smalltriangleup: \Fbb^{n_1\times m\times n_3} \times \Fbb^{m\times n_2\times n_3} \to \Fbb^{n_1\times n_2\times n_3}$, computes $\cC = \cA \smalltriangleup \cB$ where $\cC_{:,:,k} = \cA_{:,:,k}\cB_{:,:,k}$ for $k\in [n_3]$. 
Computationally, the facewise product can be parallelized over the frontal slices. In terms of linear algebra, we can phrase the facewise product in terms of block diagonal matrices. Specifically, for a tensor $\cA \in \Fbb^{n_1\times n_2\times n_3}$, define its \emph{block-diagonalization} as

\begin{equation}\label{eq:BlockDiag}
\bdiag(\cA) = \begin{bmatrix} \cA_{:,:,1} & & & \\ & \cA_{:,:,2} & & \\ & & \ddots & \\ & & & \cA_{:,:,n_3}\end{bmatrix} \in \Fbb^{n_1n_3\times n_2n_3}
\end{equation}
In this notation, we have that $\bdiag(\cA \smalltriangleup \cB) = \bdiag(\cA)\bdiag(\cB)$.

\subsection{An Algebraic Ring Perspective of the Tubal $\starM$-Product} 

Following the works~\cite{kernfeld_tensortensor_2015, kilmer_factorization_2011}, we provide a brief overview of core properties of the $\starM$-framework, starting from the foundation of algebraically-motivated tensor-tensor products (the $\starM$-product), subsequent linear algebraic properties induced by the $\starM$-product, and concluding with a $\starM$-analog of the singular value decomposition (SVD).  

Fundamentally, the $\starM$-product defines a multiplication rule for third-order tensors that is parameterized by an invertible matrix $M$; that is, $M\in \GL_{n_3}(\Fbb)$ where $\GL_{n_3}(\Fbb)$ is the general linear group over $\Fbb$ or the set of invertible $n_3\times n_3$ matrices with entries in $\Fbb$. 
In some cases, we will restrict ourselves to unitary or orthogonal matrices $M$ with $M^H M = I_{n_3}$.  
We denote as $M\in \Ogroup_{n_3}(\Fbb)$ where $\Ogroup_{n_3}(\Fbb)$ is the group of $n_3\times n_3$ unitary or orthogonal matrices, when $\Fbb = \bbC$ or $\bbR$, respectively. 

We start by defining the $\starM$-product on tubes. 
Let $\ba,\bb \in \bbF^{1\times 1 \times n_3}$ and fix $M \in \GL_{n_3}(\bbF)$. 
The operation $\starM: \bbF^{1\times 1 \times n_3} \times \bbF^{1\times 1 \times n_3} \to \bbF^{1\times 1 \times n_3}$ is defined as
	\begin{align}\label{eq:starm_tube}
	\ba \starM \bb = \left((\ba \times_3 M) \odot (\bb \times_3 M)\right)\times_3M^{-1}
	\end{align}
where $\odot$ is the Hadamard pointwise product.

From the isomorphisms in~\eqref{eq:tubevec}, an equivalent vectorized presentation of the tubal $\starM$-product in~\eqref{eq:starm_tube} is
	\begin{align}\label{eq:starm_tube_vec}
	\ba \starM \bb = \tube(M^{-1}\diag(M\vec(\ba))M\vec(\bb))
	\end{align}
where $\diag: \Fbb^{n_3} \to \Dgroup_{n_3}(\Fbb)$ is a bijection that maps the entries of a vector  to the diagonal entries of a diagonal matrix.

 Note that multiplictaion by a tube $\ba \in \Fbb^{1\times 1\times n_3}$ is an $\Fbb$-linear transformation. We denote 
 this transformation $T_{\ba}: \Fbb^{1\times 1\times n_3} \to \Fbb^{1\times 1\times n_3}$; that is, $T_{\ba}(\bfx) = \bfa \starM \bfx$ for $\bfx\in \Fbb^{1\times 1\times n_3}$. 
Under the identification in~\eqref{eq:starm_tube_vec}, $T_{\ba}$ has a matrix representative 
\begin{align}\label{eq:matrix_representative}
	M^{-1}\diag(M\vec(\ba))M = \sum_{i = 1}^{n_3}a_iM^{-1}\diag(M(:,i))M. 
\end{align}
In other words, $T_{\bfa}(\bfx) \equiv A\vec(\bx)$ where $A$ is the matrix in~\eqref{eq:matrix_representative}. 
We provide a concrete example of the $\starM$-algebraic structure in~\Cref{ex:t_prod}. 

\begin{example}[The matrix representations of the $t$-product \cite{kilmer_factorization_2011}]\label{ex:t_prod}
The $t$-product is the $\starM$-product with $M$ taken to be the (unnormalized) discrete Fourier matrix $F\in \bbC^{n_3\times n_3}$. 
Each entry of $F$ is a power of a complex root of unity; that is, $F_{j+1,k+1} = e^{2\pi \mathbbm{i} jk/n_3}$ for $j,k=0,\ldots, n_3-1$ where $\mathbbm{i} = \sqrt{-1}$. 
The matrix representative of the linear transformation $T_\ba: \bbC^{1\times 1\times n_3} \to \bbC^{1\times 1\times n_3}$ is the circulant matrix
\begin{align}\label{eq:tprod_tubal_representation}
   F^{-1}\diag(F\vec(\ba))F = \begin{bmatrix} a_1 & a_{n_3} & \dots & a_2\\
a_2 & a_1 & \dots & a_3\\
\vdots & & \ddots & \vdots\\
a_{n_3} & a_{n_3-1} & \dots & a_1\end{bmatrix}
= \sum_{i=1}^{n_3}a_i Z^{i-1}
\end{align}
where $Z$ is the circulant downshift matrix  with the property $Z^{n_3} = I$; i.e.,
	\begin{align}\label{eq:circulant_downshift}
	Z = \begin{bmatrix}
	0 & 0 & \cdots  & 1\\
	1 & 0 & \cdots & 0 \\
	0 & \ddots  & \ddots & \vdots\\
	0 & 0 & 1 & 0
	\end{bmatrix}.
	\end{align}

\end{example}
As a result of the matrix representation of tubal multiplication, the $\starM$-product gives $\bbF^{1\times 1 \times n_3}$ the structure of a \emph{$\bbF$-algebra with multiplicative identity} $\bfe_M = \tube(M^{-1}\mathbbm{1}_{n_3})$, where $\mathbbm{1}_{n_3}$ is the $n_3\times 1$ all ones vector~\cite{kernfeld_tensortensor_2015}. 
We denote the ring of tubes given by the $\starM$-product over the field $\bbF$ by $\bbF_M$.  In what follows, it will sometimes be advantageous to ``forget" the ring structure on $\bbF_M$, and only emphasize the $\bbF$-vector space strucutre, in which case we use the notation $\bbF^{1\times 1\times n_3}$. 
We note that the rings $\bbF_M$ for $M\in \GL_{n_3}(\bbF)$ are all isomorphic to one another in the category of $\bbF$-algebras.

\begin{proposition}\label{prop:All_iso}
    Let $M \in \GL_{n_3}(\bbF)$ and $I$ be the $n_3\times n_3$ identity matrix. Then, $\bbF_M \simeq \bbF_I$ as $\bbF$-algebras. 
\end{proposition}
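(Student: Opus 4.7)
The plan is to exhibit an explicit $\bbF$-algebra isomorphism $\phi : \bbF_M \to \bbF_I$ and verify the three required properties: $\bbF$-linearity (equivalently, $\bbF$-vector space isomorphism), compatibility with the $\starM$ multiplication, and preservation of the multiplicative identity. The natural candidate is the ``transform-domain'' map
\begin{equation*}
\phi(\ba) = \ba \times_3 M,
\end{equation*}
since passing to the transform domain is precisely the operation that converts the $\starM$-product into a Hadamard product on the tube entries, and the Hadamard product is exactly the multiplication rule of $\bbF_I$.

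First I would observe that $\phi$ is $\bbF$-linear and invertible, with inverse $\phi^{-1}(\bc) = \bc \times_3 M^{-1}$, simply because the mode-$3$ product with a fixed invertible matrix is a linear bijection on $\bbF^{1\times 1\times n_3}$. Next I would verify multiplicativity: given $\ba,\bb \in \bbF^{1\times 1\times n_3}$, set $\widehat{\ba} = \ba \times_3 M$ and $\widehat{\bb} = \bb \times_3 M$; then by the defining formula \eqref{eq:starm_tube},
\begin{equation*}
\phi(\ba \starM \bb) \;=\; \bigl((\widehat{\ba} \odot \widehat{\bb}) \times_3 M^{-1}\bigr) \times_3 M \;=\; \widehat{\ba} \odot \widehat{\bb},
\end{equation*}
using that $(\cdot \times_3 M^{-1}) \times_3 M$ is the identity. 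On the other hand, when $M = I$, the rule \eqref{eq:starm_tube} reduces to $\phi(\ba) \star_I \phi(\bb) = \widehat{\ba} \odot \widehat{\bb}$, so the two agree.

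Finally I would check the identity: the $\starM$-unit is $\bfe_M = \tube(M^{-1}\mathbbm{1}_{n_3})$, and $\phi(\bfe_M) = \tube(M M^{-1} \mathbbm{1}_{n_3}) = \tube(\mathbbm{1}_{n_3}) = \bfe_I$, which is the multiplicative identity of the Hadamard product on $\bbF_I$. There is no real obstacle here; the result is essentially a restatement that diagonalizing a commutative matrix algebra by a fixed change of basis yields an isomorphic algebra, and the only mild care needed is to keep track of which side of the transform one is on when verifying multiplicativity.
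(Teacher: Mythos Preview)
Your proof is correct and follows essentially the same approach as the paper: both use the transform-domain map $\phi(\ba)=\ba\times_3 M$ and verify it is a bijective $\bbF$-algebra homomorphism. The only cosmetic differences are that you use the Hadamard formulation \eqref{eq:starm_tube} where the paper uses the vectorized version \eqref{eq:starm_tube_vec}, and you additionally check that $\phi(\bfe_M)=\bfe_I$, which is harmless but redundant since a bijective multiplicative map between unital rings automatically preserves the identity.
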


\begin{proof}
The map $\phi:\bbF_M \to \bbF_I$ defined by $\phi(\ba) = \ba \times_3 M$ is bijective and $\bbF$-linear since $M$ is invertible. Following~\eqref{eq:starm_tube_vec}, it is a $\bbF$-algebra homomorphism because 
\begin{subequations}
\begin{align}
    \phi(\ba\starM\bb)  &= \tube(M^{-1}\diag(M\vec(\ba))M\vec(\bb))\times_3 M\\
                        &= \tube(\diag(M\vec(\ba))M\vec(\bb))\\
                        &= \phi(\ba)\star_I\phi(\bb).
\end{align}
\end{subequations}
\end{proof}

The result of Proposition \ref{prop:All_iso} is that the commutative algebra alone is too coarse of a framework to effectively study the family of $\starM$-products. However, we notice that the map $\phi$ used in the proof of Proposition \ref{prop:All_iso} is simply a change of ($\Fbb$-vector space) basis on $\Fbb_M$. So, it is natural to ask “What is a good choice of basis?” We work towards an representation-theoretic answer to this question in Section \ref{sec:Equivariance}.

\subsubsection{Squares in the Ring $\bbR_M$} Nonnegative elements of the ring $\bbR$ are central to the theory of semidefinite matrices and squares in the ring $\bbR_M$ are the natural generalizations to the $\starM$-setting. 
Thus, we characterize elements of the ring $\bbR_M$ which are squares. 

\begin{lemma}\label{lem:squares}
Fix $M \in \GL_{n_3}(\Rbb)$. Then, $\ba \in \Rbb_M$ is a square if and only if $\vec(\widehat{\ba}) \in \bbR^{n_3}_+$ where $\widehat{\bfa} = \bfa \times_3 M$. 
\end{lemma}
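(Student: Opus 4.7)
The plan is to reduce the question to the identity case $M = I_{n_3}$ via \Cref{prop:All_iso}, where the $\star_I$-product is simply the entrywise product of vectors and the characterization of squares is immediate.

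More explicitly, I would first observe that by \eqref{eq:starm_tube}, the equation $\ba = \bb \starM \bb$ is equivalent to $\widehat{\ba} = \widehat{\bb} \odot \widehat{\bb}$ after applying $\times_3 M$ to both sides, where $\widehat{\bb} = \bb \times_3 M$. Equivalently, letting $\phi$ be the $\bbR$-algebra isomorphism $\phi(\bx) = \bx \times_3 M$ from $\bbR_M \to \bbR_I$ of \Cref{prop:All_iso}, the element $\ba$ is a square in $\bbR_M$ if and only if $\phi(\ba) = \widehat{\ba}$ is a square in $\bbR_I$. So it suffices to prove the lemma in the special case $M = I_{n_3}$, i.e., to show that $\widehat{\ba}$ is a square in $\bbR_I$ if and only if $\vec(\widehat{\ba}) \in \bbR^{n_3}_+$.

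For the forward direction, if $\widehat{\ba} = \widehat{\bb} \star_I \widehat{\bb}$, then by \eqref{eq:starm_tube} with $M = I_{n_3}$ one has $\vec(\widehat{\ba})_k = \vec(\widehat{\bb})_k^2 \geq 0$ for each $k \in [n_3]$, so $\vec(\widehat{\ba}) \in \bbR^{n_3}_+$. For the converse, given $\vec(\widehat{\ba}) \in \bbR^{n_3}_+$, I would explicitly produce a square root by defining $\bc \in \bbR^{1\times 1 \times n_3}$ via $\vec(\bc)_k = \sqrt{\vec(\widehat{\ba})_k}$ so that $\bc \star_I \bc = \widehat{\ba}$, and then setting $\bb = \bc \times_3 M^{-1}$ gives $\bb \starM \bb = \ba$ after unwinding the isomorphism $\phi$.

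There is no real obstacle here beyond bookkeeping: the main thing to get right is the translation between the ring structure on $\bbR_M$ and the pointwise multiplication on $\bbR_I$ via $\phi$, which is already packaged cleanly in \Cref{prop:All_iso}. Once that reduction is made, the characterization of squares in $\bbR_I$ is just the elementary fact that a real number is a square iff it is nonnegative, applied coordinatewise.
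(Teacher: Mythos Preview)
Your proposal is correct and is essentially the same argument as the paper's, just packaged through the isomorphism $\phi$ of \Cref{prop:All_iso} rather than written out directly via \eqref{eq:starm_tube_vec}. The paper simply unwinds the definition to get $\vec(\widehat{\ba}) = \diag(Mx)Mx$ (forward) and $\ba = \bx \starM \bx$ with $\bx = \tube(M^{-1}y)$ for $y$ a coordinatewise square root of $\vec(\widehat{\ba})$ (converse); your reduction to $M = I_{n_3}$ followed by the coordinatewise argument is the same computation in slightly more conceptual clothing.
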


\begin{proof} If $\ba \in \Rbb_M$ is a square, then $\vec(\ba) = M^{-1}\diag(Mx)Mx$ for some $x \in \bbR^{n_3}$.  Therefore, $\vec(\widehat{\ba}) = \diag(Mx)Mx \in \bbR^{n_3}_+$. 
Conversely,  if $\vec(\widehat{\ba}) \in \bbR^{n_3}_+$, then $\vec(\widehat{\ba}) = \diag(y) y$ for some $y \in \bbR^{n_3}$. 
 It follows that $\ba = \bfx \starM \bfx$ where $\bfx = \tube(M^{-1}y)$.  
\end{proof}

 \subsection{The $\starM$-Product} 
 
 As depicted in~\Cref{fig:tensor_notation}, tensors can be viewed as ``matrices of tubes.''  
Thus, the ring structure on the space of tubes suggests a natural tensor-tensor product for third-order tensors. 
Specifically,  we consider a tensor  $\cA\in \bbF^{n_1\times n_2 \times n_3}$ to be an $\bbF_M$-linear transformation of free modules $(\bbF_M)^{n_2} \to (\bbF_M)^{n_1}$. 
We thereby define the $\starM$-product of tensors analogously to matrix-matrix products.  
\begin{defn}[$\starM$-product]\label{def:mprod}
    Fix $M\in \GL_{n_3}(\Fbb)$. 
    Let $\cA \in \bbF_M^{n_1\times m}$ and $\cB \in \bbF_M^{m \times n_2}$. 
    Then, $\cC = \cA \starM \cB \in \bbF_M^{n_1\times n_2}$ where
    \begin{align}
    \cC_{i,j} = \sum_{\ell = 1}^{m}\cA_{i,\ell}\starM\cB_{\ell,j}.
\end{align}
for $i\in [n_1]$ and $j\in [n_2]$. 
\end{defn} 
Consequently, the $\starM$-product $\cA \starM \cB$ is the composition $(\bbF_M)^{n_2} \overset{\cB}{\to} (\bbF_M)^{m} \overset{\cA}{\to} (\bbF_M)^{n_1}$.  
 In practice, the $\starM$-product is implemented following \Cref{alg:Mprod}. 

\begin{algorithm}[t]
\caption{$\starM$-product}\label{alg:Mprod}
\KwData{$\cA \in \bbF^{n_1\times m \times n_3}, \cB \in \bbF^{m \times n_2 \times n_3}, M \in \GL_{n_3}(\bbF)$.}
\KwResult{$\cC = \cA \starM \cB \in \bbF^{n_1\times n_2\times n_3}$}
$\widehat{\cA} \gets \cA \times_3 M$, $\widehat{\cB} \gets \cB \times_3 M$\;
$\widehat{\cC} \gets \widehat{\cA} \smalltriangleup \widehat{\cB}$\;
$\cC \gets \widehat{\cC} \times_3 M^{-1}$.
\end{algorithm}

\Cref{alg:Mprod} shows that in the transform domain, the $\starM$ product operates independently on frontal slices. In terms of block diagonal matrices, $\bdiag(\hat{\cC}) = \bdiag(\hat{\cA})\bdiag(\hat{\cB})$. This provides one interpretation of the $\starM$ product in which the choice of matrix $M$ and passage to the transform domain amounts to a block diagonalization. 

An additional perspective on the $\starM$-product as a structured, block matrix operation will prove useful in subsequent theoretical analysis. 
We define the following tensor matricization operation. 
Let $\mymat_M: \Fbb_M^{n_1\times n_2} \to \Fbb^{n_1 n_3 \times n_2n_3}$ be defined as
	\begin{align}\label{eq:tensor_mat}
	\mymat_M(\cA) = \begin{bmatrix}
				\diag(M \vec(\cA_{1,1,:})) & \cdots & \diag(M \vec(\cA_{1,n_2,:}))\\
				\vdots & \ddots & \vdots\\
				\diag(M \vec(\cA_{n_1,1,:})) & \cdots & \diag(M \vec(\cA_{n_1,n_2,:}))
				\end{bmatrix}.
	\end{align}
The map $\mymat_M$ is an isomorphism onto its image, which consists of the block matrices with $n_1\times n_2$ elements of $D_{n_3}(\bbF)$. Using~\eqref{eq:tensor_mat}, we extend the matrix representative tubes in~\eqref{eq:matrix_representative} to an $\bbF$-linear transformation of the underlying $\bbF$-vector spaces $\bbF^{n_2 \times 1 \times n_3} \to \bbF^{n_1 \times 1 \times n_3}$. 
Specifically, given $M\in \GL_{n_3}(\Fbb)$,  let $T_{\cA}: \Fbb_M^{n_2} \to \Fbb_M^{n_1}$ be defined as $T_{\cA}(\cX) = \cA \starM \cX$ for any $\cX\in \Fbb_M^{n_2}$. 
Let $x = \mymat_I(\cX) \mathbbm{1}_{n_2n_3} \in \Fbb^{n_2n_3}$ where $I$ is the $n_3\times n_3$ identity matrix; effectively, $x$ is a specific vectorization of $\cX$. 
Then, 
	\begin{align}\label{eq:tensor_mat_full}
	T_{\cA}(\cX) \equiv (I_{n_1}\otimes M^{-1}) \mymat_M(\cA) (I_{n_2}\otimes M) x
	\end{align}
where $\otimes$ is the Kronecker product~\cite[Section 10.2]{petersen_matrix_2012}. 

\subsection{A Matrix Mimetic Algebra}

Because of the similarities between the $\starM$-product and traditional matrix-matrix multiplication, we say the $\starM$-product is \emph{matrix mimetic}. 
We can extend this similarity to other familiar linear algebraic properties. For example, the notions of multiplicative identity tensor and the transpose of a tensor make sense in the $\starM$ setting. 
The definitions follow from~\cite{kilmer_tensor-tensor_2021}, presented here in the language of rings.

\begin{defn}[$\star_M$-identity tensor]
The $\star_M$-identity tensor, denoted $\cI_M \in \bbF_M^{n\times n}$, has tubes 
	\begin{align}
	(\cI_M)_{i,j,:} = \begin{cases}\bfe_{M} & i = j,\\ 0 & \text{otherwise} \end{cases}
	\end{align}
for $i,j\in [n]$ where $\bfe_M = \tube(M^{-1} \mathbbm{1}_{n_3})$ is the $\starM$-multiplicative identity in $\Fbb_M$.  
The $\starM$-identity tensor satisfies $\cI_M \starM \cA = \cA \starM \cI_M = \cA$ for all $\cA \in \bbF_M^{n \times n}$. 
\end{defn}

\begin{defn}[$\starM$-conjugate transpose/$\starM$-transpose]\label{def:StarM_Transpose}

Fix $M\in \GL_{n_3}(\bbF)$. 
The $\starM$-conjugate transpose of $\cA \in \bbF_M^{n_1\times n_2}$, denoted $\cA^H \in \bbF_M^{n_2 \times n_1}$, is uniquely defined as 
    \begin{align}
        (\cA^H \times_3 M)_{:,:,k} = \left((\cA \times_3 M)_{:,:,k}\right)^H
    \end{align}
    for each $k \in [n_3]$.
If $\bbF = \bbR$, the notion of $\starM$-transpose, denoted $\cA^\top$, reduces to permuting the indices of tubes or equivalently transposing the frontal slices of $\cA$. 
\end{defn}

We note that the $\starM$-conjugate transpose satisfies matrix mimetic properties such as reversal of the order of the $\starM$-product under transposition; i.e., $(\cA \starM \cB)^H = \cB^H \starM \cA^H$.  
Further note that transposition commutes with the  tensor matricization operation in~\eqref{eq:tensor_mat}; that is, $\mymat_M(\cA^H) = (\mymat_M(\cA))^H$.

\begin{defn}[$\starM$-Hermitian/$\starM$-symmetric]\label{def:starm_symmetric}
Fix $M\in \GL_{n_3}(\bbF)$. A tensor $\cA \in \bbF_M^{n \times n}$ is $\starM$-Hermitian if $\cA^H = \cA$. 
If $\bbF = \bbR$ and $\cA^\top = \cA$, we call the tensor $\starM$-symmetric. 
\end{defn}

The definition of a $\starM$-symmetric tensor $\cA \in \bbR^{n\times n}_M$ provided in Definition \ref{def:starm_symmetric} is consistent with our point of view that third-order tensors are matrices of tubes. However, this differs from the notion of a symmetric order 3 tensor as an element of the symmetric power $\mathrm{Sym}^3(\bbR^n)$ encountered elsewhere. 

\begin{defn}[$\starM$-unitary/$\starM$-orthogonal]
For a fixed $M\in \GL_{n_3}(\bbF)$, a tensor $\cQ \in \bbF_M^{n\times n}$ is $\starM$-unitary if $\cQ^H \starM \cQ = \cQ \starM \cQ^H = \cI_M$. 
If $\bbF = \bbR$, we call a $\starM$-unitary tensor $\starM$-orthogonal. 
\end{defn}

A foundational result in the $\starM$-product literature is the existence of a tensor singular value decomposition (SVD) analogous to the matrix SVD. 

\begin{theorem}[Existence of the $\starM$-SVD {\cite{kernfeld_tensortensor_2015, kilmer_tensor-tensor_2021}}]\label{thm:SVD}
Fix $M\in \GL_{n_3}(\bbF)$ and let $\cA\in \Fbb_M^{n_1\times n_2}$. 
There exist $\starM$-unitary tensors $\cU\in \Fbb_M^{n_1\times n_1}$ and $\cV\in \bbF_M^{n_2\times n_2}$ and an f-diagonal tensor $\cS\in \bbF_M^{n_1\times n_2}$ such that 
\begin{equation}\tag{$\starM$-SVD}\label{eq:SVD}
\cA = \cU \starM \cS \starM \cV^H.
\end{equation}

\end{theorem}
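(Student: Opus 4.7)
The plan is to exploit the transform-domain characterization of the $\starM$-product encoded in \Cref{alg:Mprod}, namely that in the transform domain $\starM$-multiplication becomes facewise matrix multiplication. This reduces the existence of a $\starM$-SVD to the existence of $n_3$ ordinary matrix SVDs, one per frontal slice.

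Concretely, I would first pass to the transform domain by computing $\widehat{\cA} = \cA \times_3 M \in \Fbb^{n_1 \times n_2 \times n_3}$. For each $k \in [n_3]$, I would then apply the classical matrix SVD to the frontal slice to obtain
\begin{equation*}
    \widehat{\cA}_{:,:,k} = U_k \Sigma_k V_k^H,
\end{equation*}
with $U_k \in \Fbb^{n_1 \times n_1}$ and $V_k \in \Fbb^{n_2 \times n_2}$ unitary and $\Sigma_k \in \Fbb^{n_1 \times n_2}$ diagonal with nonnegative real entries. Stack these slicewise to define tensors $\widehat{\cU}, \widehat{\cS}, \widehat{\cV}$ by $\widehat{\cU}_{:,:,k} = U_k$, $\widehat{\cS}_{:,:,k} = \Sigma_k$, and $\widehat{\cV}_{:,:,k} = V_k$, and then pull back to the spatial domain via
\begin{equation*}
    \cU = \widehat{\cU} \times_3 M^{-1}, \qquad \cS = \widehat{\cS} \times_3 M^{-1}, \qquad \cV = \widehat{\cV} \times_3 M^{-1}.
\end{equation*}

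The remaining work is verification. For the factorization identity, \Cref{alg:Mprod} gives $\cU \starM \cS \starM \cV^H = (\widehat{\cU} \smalltriangleup \widehat{\cS} \smalltriangleup \widehat{\cV}^H) \times_3 M^{-1}$, and by \Cref{def:StarM_Transpose} the facewise slices of $\widehat{\cV}^H$ are precisely $V_k^H$, so the bracketed tensor has $k$-th slice $U_k \Sigma_k V_k^H = \widehat{\cA}_{:,:,k}$; pulling back recovers $\cA$. For $\starM$-unitarity of $\cU$, the same identity yields $(\cU^H \starM \cU) \times_3 M = \widehat{\cU}^H \smalltriangleup \widehat{\cU}$, whose $k$-th slice is $U_k^H U_k = I_{n_1}$; since $\cI_M \times_3 M$ also has every slice equal to $I_{n_1}$ (because $\bfe_M \times_3 M = \tube(\mathbbm{1}_{n_3})$ by definition of $\bfe_M$), we conclude $\cU^H \starM \cU = \cI_M$, and the identical argument on the other side and for $\cV$ completes $\starM$-unitarity. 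Finally, $\cS$ is f-diagonal by inspection of the transform-domain construction, because being f-diagonal is preserved under $\times_3 M^{-1}$ applied tubewise.

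The only mildly delicate step is keeping the definitions of $\starM$-transpose and of $\cI_M$ aligned in the transform domain, which is what allows the construction to succeed for any invertible $M$ rather than requiring $M$ to be unitary; no essentially new machinery beyond the classical matrix SVD is needed, so I do not expect a substantive obstacle.
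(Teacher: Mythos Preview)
Your proposal is correct and matches the paper's approach exactly: the paper does not give a self-contained proof of \Cref{thm:SVD} (it is cited from \cite{kernfeld_tensortensor_2015, kilmer_tensor-tensor_2021}), but immediately after the statement it describes precisely the facewise construction you carry out, namely computing matrix SVDs $\widehat{\cA}_{:,:,k} = \widehat{\cU}_{:,:,k}\widehat{\cS}_{:,:,k}\widehat{\cV}_{:,:,k}^H$ in the transform domain and pulling back via $\times_3 M^{-1}$. Your added verification steps (factorization, $\starM$-unitarity, and f-diagonality) are all correct and fill in the details the paper leaves implicit.
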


The $\starM$-SVD leads to a notion of tensor rank. 

\begin{defn}[$\starM$-rank]\label{def:starm_rank}
Fix $M\in \GL_{n_3}(\bbF)$ and let $\cA\in \Fbb_M^{n_1\times n_2}$ with $\starM$-SVD given by $\cA= \cU \starM \cS \starM \cV^H$. 
The $\starM$-rank is the number of nonzero singular tubes in $\cS$; that is, 
	\begin{align}
		\starMrank(\cA) = \#\{\cS_{i,i,:} \mid \cS_{i,i,:} \not= {\bf0} \text{ for } i\in [\min(n_1,n_2)]\} .
	\end{align}
\end{defn}

Algorithmically, the $\starM$-SVD is formed by computing matrix SVDs facewise in the transform domain. Specifically, for each $k \in [n_3]$, we compute the matrix SVD
\begin{align}\label{eq:facewise_svd}
\widehat{\cA}_{:,:,k} = \widehat{\cU}_{:,:,k} \widehat{\cS}_{:,:,k} \widehat{V}_{:,:,k}^H
\end{align}
for each $k\in [n_3]$ where $\widehat{\cdot} = \cdot \times_3 M$. 
Note that the properties of the matrix SVD hold for each frontal slice in the transform domain; in particular, $\widehat{\cU}_{:,:,k}$ and $\widehat{\cV}_{:,:,k}$ are unitary and $\widehat{\cS}_{:,:,k}$ is diagonal with nonnegative, ordered diagonal entries $\widehat{\cS}_{1,1,k} \ge  \cdots \ge \widehat{\cS}_{r,r,k}  \ge 0$ where $r = \starMrank(\cA)$. 

\begin{remark}
If $\cA$ is $\starM$-symmetric, then we can take $\cU = \cV$ in the decomposition \eqref{eq:SVD}. This follows from the fact that the SVD of $\cA$ yields SVDs of the frontal slices of $\cA \times_3 M$. 
\end{remark}

\begin{remark}
For expositional clarity, we may use $\starM$- or $M$- when describing the properties of a tensor.  
We may omit the prefix when there is no ambiguity. 
\end{remark}

\subsection{Inner Product of Tensors Under the $\starM$-Product}

Central to constructing $\starM$-based tensor semidefinite programming is the inner product between tensors. 
We define the Frobenius-norm-induced inner product $\langle \cdot, \cdot \rangle : \Fbb^{n_1\times n_2\times n_3}\times \Fbb^{n_1\times n_2\times n_3} \to \Fbb$ as
	\begin{align}\label{eq:inner_prod}
	\langle \cA, \cB \rangle = \sum_{i = 1}^{n_1}\sum_{j = 1}^{n_2} \sum_{k = 1}^{n_3} \overline{\cA_{i,j,k}}\cB_{i,j,k}
	\end{align}
where $\overline{z}$ denotes the complex conjugate of a scalar $z\in \Fbb$. 
The Frobenius norm of a tensor induced by~\eqref{eq:inner_prod} is thereby $\|\cA\|_F = \sqrt{\langle \cA, \cA \rangle}$. 

Note that the definition of a tensor inner product is consistent with that of matrices (i.e., when $n_3 = 1$), which is alternatively presented as $\langle A, B\rangle = \mytrace(A^H B)$ for $A, B\in \Fbb^{n_1\times n_2}$. 
This observation gives rise to a facewise interpretation of~\eqref{eq:inner_prod}, given by
	\begin{align}\label{eq:inner_prod_facewise}
	\langle \cA, \cB \rangle = \sum_{k=1}^{n_3} \langle \cA_{:,:,k}, \cB_{:,:,k}\rangle.
	\end{align}
For completeness, we additionally provide a tubal perspective of~\eqref{eq:inner_prod} given by
	\begin{align}\label{eq:inner_prod_tube}
	\langle \cA, \cB \rangle = \sum_{i = 1}^{n_1}\sum_{j = 1}^{n_2} \langle \cA_{i,j,:}, \cB_{i,j,:}\rangle
	\end{align}
where $\langle \bfa, \bfb \rangle = \vec(\ba)^H \vec(\bb)$ for tubes $\bfa, \bfb\in \Fbb^{1\times 1\times n_3}$.

We next establish some useful identities about the inner product of real tensors and $\starM$ multiplication given by an orthogonal matrix $M$. These identities generalize familiar properties of the inner product of matrices. We note that the orthogonality of $M$ is a necessary hypothesis for the identities to hold. 

\begin{lemma}\label{lem:Trace_transpose}

Fix $M \in \Ogroup_{n_3}(\bbR)$ and let $\cX \in \bbR^{n \times 1 \times n_3}$.

\begin{enumerate}
    \item If $\cA,\cB \in \bbR^{1 \times n \times n_3}$, then

    \begin{equation}\langle \cA \starM \cX,\cB \starM \cX \rangle = \langle \cX, (\cA^\top \starM \cB) \starM \cX\rangle.\end{equation}

    \item If $\cA \in \bbR^{n \times n \times n_3}$, then

    \begin{equation}\langle \cX, \cA \starM \cX \rangle = \langle \cA, \cX \starM \cX^\top \rangle.\end{equation}

    \item For any tensors $\cA,\cB \in \bbR^{n_1 \times n_2 \times n_3}$, we have that 

    \begin{equation}\langle \cA \times_3 M, \cB \times_3 M \rangle = \langle \cA, \cB \rangle.\end{equation}
\end{enumerate}

\end{lemma}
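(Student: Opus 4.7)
The orthogonality of $M$ enters only through a Parseval-type identity, which is statement (3), so the plan is to prove (3) first and then use it to push both sides of (1) and (2) into the transform domain, where by \Cref{alg:Mprod} the $\starM$-product acts as facewise matrix multiplication and the two identities reduce to elementary trace computations. For (3), I would expand using the tubal form~\eqref{eq:inner_prod_tube}: a direct computation gives
\begin{equation*}
\langle \cA \times_3 M, \cB \times_3 M \rangle = \sum_{i,j} \vec(\cA_{i,j,:})^\top M^\top M \vec(\cB_{i,j,:}),
\end{equation*}
which collapses to $\langle \cA, \cB\rangle$ because $M^\top M = I_{n_3}$ by orthogonality.

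For (1) and (2), write $\widehat{\cdot} = \cdot \times_3 M$. Combining (3) with the facewise form~\eqref{eq:inner_prod_facewise} of the inner product and with \Cref{alg:Mprod} yields $\langle \cP, \cQ \starM \cR\rangle = \sum_k \langle \widehat{\cP}_{:,:,k}, \widehat{\cQ}_{:,:,k}\widehat{\cR}_{:,:,k}\rangle$ for any compatibly shaped tensors; moreover, \Cref{def:StarM_Transpose} shows that transposition commutes with the transform in the facewise sense, i.e.\ $(\widehat{\cP^\top})_{:,:,k} = (\widehat{\cP}_{:,:,k})^\top$. Hence (1) reduces to the facewise matrix identity $(a x)^\top (b x) = x^\top (a^\top b) x$ for rows $a,b\in\bbR^{1\times n}$ and a column $x\in\bbR^n$, which is just associativity, and (2) reduces to $x^\top A x = \tr(A x x^\top) = \langle A, xx^\top\rangle$, which is trace cyclicity together with the definition of the matrix Frobenius inner product.

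The main conceptual point, rather than any serious obstacle, is recognizing that (3) is the Parseval identity for the transform $\times_3 M$ and that the orthogonality of $M$ is genuinely necessary: without $M^\top M = I_{n_3}$, the inner product is distorted under the transform, so passage to the transform domain ceases to preserve inner products and the subsequent facewise reductions fail. The remainder of the argument is bookkeeping that mirrors standard matrix identities one frontal slice at a time.
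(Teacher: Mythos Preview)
Your proof is correct but takes a different route from the paper for parts (1) and (2). For (3) you and the paper do the same thing: expand tubewise and use $M^\top M = I_{n_3}$. For (1) and (2), the paper works entirely in the spatial domain, expanding $\cA\starM\cX$ via the tubal matrix representative~\eqref{eq:matrix_representative} and manipulating sums of the form $\sum_{i,j}\vec(\bx_i)^\top M^\top \diag(M\vec(\ba_{i,j}))M\vec(\bx_j)$ directly; the key step in each case is commuting diagonal matrices (e.g.\ $\diag(M\vec(\ba))\,M\vec(\bx) = \diag(M\vec(\bx))\,M\vec(\ba)$) to swap the roles of the tensors. You instead invoke (3) up front to move everything into the transform domain, where \Cref{alg:Mprod} and \Cref{def:StarM_Transpose} turn the $\starM$-operations into ordinary facewise matrix products, and then the identities are immediate consequences of $x^\top a^\top b x = (ax)^\top(bx)$ and $x^\top A x = \tr(Axx^\top)$. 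Your approach is more modular and makes the role of orthogonality transparent (it is used once, in the Parseval step), while the paper's computation is self-contained and does not require first isolating (3). Both are equally valid; yours arguably generalizes more cleanly to other identities of the same flavor.
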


\begin{proof} The proofs are calculations which we defer to \ref{sec:Appendix_Proof}.
\end{proof}

\section{Properties of $\starM$-semidefinite tensors}\label{sec:SemidefiniteTensors}

In this section, we provide a definition of $M$-positive semidefinite ($\starM$-PSD) third-order tensors which generalizes the definition of $t$-PSD tensors {\cite[Definition 3(ii)]{zheng_unconstrained_2022}}. 
We then prove basic properties of $\starM$-PSD tensors, showing that many properties from PSD matrices generalize to the $\starM$-product setting, and conclude with a discussion of $\starM$-semidefinite programming problems. 
Since our ultimate goal is the formulation of convex optimization problems, we restrict our attention to real-valued tensors with square frontal slices, i.e. $\cA \in \bbR_M^{n\times n}$ for $M\in \GL_{n_3}(\bbR)$. 
We start with an inner-product-based definition of $\starM$-PSD. 

\begin{tcolorbox}[colback=EmoryGold!5, colframe=EmoryGold]
\begin{defn}[$\starM$-positive semidefinite tensor]\label{def:MPSD}
Fix $M \in GL_{n_3}(\bbR)$.  
A symmetric tensor $\cA \in \bbR_M^{n \times n}$ is $\starM$-positive semidefinite ($\starM$-PSD) if for all $\cX \in \bbR_M^{n}$, we have $\langle \cX, \cA \starM \cX \rangle \geq 0$. 

We denote the set of all $\starM$-positive semidefinite tensors belonging to $\Rbb_M^{n\times n}$ by $\myPSD_M^n$. We write $\cA \succeq_M 0$ if $\cA \in \myPSD_M^n$.
\end{defn} 
\end{tcolorbox}

\subsubsection{Dependence on $M$} Note that Definition \ref{def:MPSD} depends on the choice of matrix $M$. For example, the $2\times 2\times 2$ tensor $\cX$ with frontal slices 
	\begin{align}
	\cX_{:,:,1} = \begin{bmatrix} 0 & 0 \\ 0 & 1\end{bmatrix} \quad \text{and } \quad \cX_{:,:,2} = \begin{bmatrix} 1 & 0\\ 0 &0\end{bmatrix}
	\end{align}
is $I$-PSD, but not $H$-PSD for $H = \frac{1}{\sqrt{2}}\begin{bmatrix} 1 & 1\\ 1 & -1 \end{bmatrix}$. This is discussed in more detail in Example \ref{ex:FeasibleRegions}. 

\subsubsection{Convexity of $\myPSD_M^n$}

From the $\bbR$-linearity of the $\starM$-product and the bilinearity of inner products, we see that if $\lambda,\mu \in \bbR_+$ are nonnegative constants and $\cA,\cB \in\myPSD_M^{n}$, then for any $\cX \in \bbR_M^{n}$,
	\begin{align}
	\langle \cX, (\lambda \cA + \mu \cB)\starM \cX \rangle = \lambda \langle \cX, \cA \starM \cX \rangle + \mu \langle \cX, \cB \starM \cX \rangle \geq 0.
	\end{align}
Therefore $\myPSD_M^n$ is a \emph{convex cone}, i.e. a convex set invariant under nonnegative scaling.

\subsection{Properties of $\starM$-PSD Tensors}
We derive basic properties of $\starM$-PSD tensors, emphasizing connections to the familiar theory of positive semidefinite matrices; see e.g., ~\cite[Appendix A.1]{blekherman_semidefinite_2012} for details. 
In~\Cref{prop:MPSD_BlockMat}, we show the matrix representative of an $M$-PSD tensor $\cA$ is a PSD matrix. 
In~\Cref{prop:slices}, we show that each frontal slice of $\cA \times_3 M$ is a PSD matrix. 
In~\Cref{prop:square_root}, we show the existence of a tensor factorization $\cA = \cB \starM \cB^\top$.  
In~\Cref{prop:M_det_square}, we show that the $\starM$-principal minors of $\cA$ are squares in $\Rbb_M$. 
We conclude with~\Cref{prop:square_eigentubes}, which contains a brief discussion of eigentubes. 
Similar properties were shown for the $t$-product in~\cite{zheng_t-positive_2021}, and our work generalizes these properties to any orthogonal $M$.

\subsubsection{Positive Semidefinite Matrix Representative} 
In a reasonable notion of $\starM$-positive semidefiniteness, we would expect the matrix representative of the underlying $\bbR$-linear transformation of a $\starM$-PSD tensor to be a PSD matrix itself. 
We prove this property below using the matrix representative in~\eqref{eq:tensor_mat_full}.

\begin{proposition}[PSD Matrix Representative]\label{prop:MPSD_BlockMat}
Fix $M\in \Ogroup_{n_3}(\bbR)$.
Given a symmetric tensor $\cA \in \bbR_M^{n\times n}$, we have that $\cA \in \myPSD_M^n$ if and only if $(I_n \otimes M^{\top}) \mymat_M(\cA) (I_n \otimes M)$ is positive semidefinite.
\end{proposition}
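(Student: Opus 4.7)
The plan is to rewrite the tensor condition $\langle \cX, \cA \starM \cX\rangle \geq 0$ as a standard matrix quadratic-form condition on an appropriate vectorization of $\cX$, and then apply the textbook characterization of PSD matrices. The main bridge is equation~\eqref{eq:tensor_mat_full}, which identifies the linear map $T_\cA(\cX) = \cA\starM\cX$ with multiplication by $(I_n \otimes M^\top)\mymat_M(\cA)(I_n\otimes M)$ (using $M^{-1} = M^\top$ by orthogonality) applied to the tube-by-tube vectorization $x = \mymat_I(\cX)\mathbbm{1}_{n_3}$ of $\cX$.

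First, I would push the inner product into the transform domain. By Lemma~\ref{lem:Trace_transpose}(3) and the defining facewise behavior of the $\starM$-product from Algorithm~\ref{alg:Mprod},
\begin{equation*}
\langle \cX, \cA \starM \cX\rangle \;=\; \langle \widehat{\cX}, \widehat{\cA}\smalltriangleup \widehat{\cX}\rangle \;=\; \sum_{k=1}^{n_3}\widehat{\cX}_{:,1,k}^\top \widehat{\cA}_{:,:,k}\widehat{\cX}_{:,1,k}.
\end{equation*}
Second, I would match this facewise sum to $x^\top K x$ where $K = (I_n \otimes M^\top)\mymat_M(\cA)(I_n\otimes M)$. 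Setting $y = (I_n\otimes M)x$, a block-by-block calculation shows that $y$ is precisely the tube-by-tube vectorization of $\widehat{\cX}$. Because each block of $\mymat_M(\cA)$ is a diagonal matrix whose $(k,k)$-entry is $\widehat{\cA}_{i,j,k}$, a direct computation gives $y^\top \mymat_M(\cA) y = \sum_k \widehat{\cX}_{:,1,k}^\top \widehat{\cA}_{:,:,k}\widehat{\cX}_{:,1,k}$, matching the previous display.

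Third, since $\cX \mapsto x$ is a bijection $\Rbb_M^n \to \Rbb^{nn_3}$, the clause ``$\langle \cX, \cA\starM\cX\rangle \geq 0$ for all $\cX$'' becomes ``$x^\top K x \geq 0$ for all $x\in\Rbb^{nn_3}$.'' Symmetry of $K$ follows from $\starM$-symmetry of $\cA$ via the identity $\mymat_M(\cA^\top) = \mymat_M(\cA)^\top$ recorded just after Definition~\ref{def:StarM_Transpose}, so the standard definition of a PSD matrix applies to $K$ and yields both implications simultaneously.

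The main obstacle I anticipate is bookkeeping around vectorization orderings: verifying that $(I_n\otimes M)$ applied to $x$ actually realizes mode-3 multiplication, and that the block-diagonal structure of $\mymat_M(\cA)$ aligns with the same ordering to produce the facewise products $\widehat{\cA}_{:,:,k}\widehat{\cX}_{:,1,k}$. Orthogonality of $M$ is essential in the first step---Lemma~\ref{lem:Trace_transpose}(3) requires $M \in \Ogroup_{n_3}(\Rbb)$---and it is also what lets us identify $M^{-1}$ with $M^\top$ in the definition of $K$.
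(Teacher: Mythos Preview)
Your proposal is correct and lands on exactly the same identity the paper uses, namely $\langle \cX,\cA\starM\cX\rangle = x^\top K x$ with $K=(I_n\otimes M^\top)\mymat_M(\cA)(I_n\otimes M)$ and $x=\mymat_I(\cX)\mathbbm{1}_{n_3}$, after which the bijection $\cX\mapsto x$ finishes both directions at once. The only difference is cosmetic: the paper invokes the block structure in~\eqref{eq:tensor_mat_full} directly to obtain this identity in one line, whereas you route through the transform domain via Lemma~\ref{lem:Trace_transpose}(3) and then match the facewise sum to $y^\top\mymat_M(\cA)y$---a more explicit verification of the same bookkeeping, but not a genuinely different argument.
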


\begin{proof} For any $\cX\in \Rbb_M^n$, let $x \equiv  \mymat_I(\cX) \mathbbm{1}_{nn_3} \in \Rbb^{nn_3}$. 
Then, we have 
	\begin{subequations}
    \begin{align}
	\langle \cX, \cA \starM \cX \rangle& = \sum_{i = 1}^{n}\langle \cX_{i,1,:},(\cA \starM \cX)_{i,1,:}\rangle\\  &= \langle x,  (I_n \otimes M^{\top}) \mymat_M(\cA) (I_n \otimes M)x\rangle,\label{eq:PSDMatrixRepresentativeExpanded}
    \end{align}
    \end{subequations}
where we use the block structure of $x$ and $(I_n \otimes M^{\top}) \mymat_M(\cA) (I_n \otimes M)$ in \eqref{eq:PSDMatrixRepresentativeExpanded}. The equivalence of the tensor and matricized inner products combined with Definition~\ref{def:MPSD} and properties of PSD matrices in~\cite[Appendix A.1]{blekherman_semidefinite_2012} completes the bidirectional proof. 
\end{proof}

\subsubsection{Semindefiniteness in the Transform Domain} 
Most $\starM$-based algorithms are applied facewise in the transform domain, as this allows for independent matrix operations which can be parallelized.  
We show below that $\cA \succeq_M 0$ if and only if the frontal slices of $\hat{\cA}$ are PSD matrices. 

\begin{proposition}[PSD Transformed Frontal Slices]\label{prop:slices}
    Fix  $M \in \Ogroup_{n_3}(\Rbb)$. Let $\cA \in \bbR_M^{n \times n}$ be  symmetric. Then, $\cA$ is $\starM$-PSD if and only if $\bdiag(\widehat{\cA})$ is a positive semidefinite matrix. 
    
\end{proposition}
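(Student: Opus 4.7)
The plan is to translate the defining inequality $\langle \cX, \cA \starM \cX \rangle \geq 0$ into the transform domain using the isometry property of $M$ (Lemma~\ref{lem:Trace_transpose}(3)), then exploit the facewise structure of the $\starM$-product (\Cref{alg:Mprod}) to decouple the inequality into independent conditions on each frontal slice $\widehat{\cA}_{:,:,k}$.

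First, I would establish a chain of identities valid for any $\cX \in \Rbb_M^n$. Writing $\widehat{\cX} = \cX \times_3 M$ and using that $M$ is orthogonal (so $M^{-1} = M^\top$), Lemma~\ref{lem:Trace_transpose}(3) gives $\langle \cX, \cA \starM \cX \rangle = \langle \widehat{\cX}, \widehat{\cA \starM \cX}\rangle$, and by the definition of the $\starM$-product (\Cref{alg:Mprod}) we have $\widehat{\cA \starM \cX} = \widehat{\cA}\smalltriangleup \widehat{\cX}$. Combined with the facewise expression of the inner product in~\eqref{eq:inner_prod_facewise}, I obtain
\begin{equation}
\langle \cX, \cA \starM \cX\rangle = \sum_{k=1}^{n_3}\langle \widehat{\cX}_{:,:,k}, \widehat{\cA}_{:,:,k}\widehat{\cX}_{:,:,k}\rangle = \widehat{x}^{\top}\bdiag(\widehat{\cA})\,\widehat{x},
\end{equation}
where $\widehat{x}\in\Rbb^{nn_3}$ is the vectorization of $\widehat{\cX}$ obtained by stacking the columns of the frontal slices $\widehat{\cX}_{:,:,k}$ (which is precisely the block pattern assumed by $\bdiag$).

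With this identity in hand, both directions follow easily. For the ``if'' direction, if $\bdiag(\widehat{\cA})\succeq 0$ then every quadratic form $\widehat{x}^{\top}\bdiag(\widehat{\cA})\widehat{x}$ is nonnegative, so $\cA \succeq_M 0$. For the ``only if'' direction, I need to show every $\widehat{y}\in\Rbb^{nn_3}$ is realized by some $\cX \in \Rbb_M^n$; this is immediate because the map $\cX \mapsto \widehat{x}$ is a bijection (one can simply set $\cX = \widehat{\cX}\times_3 M^\top$ for any chosen $\widehat{\cX}$). Thus the nonnegativity of $\langle \cX,\cA \starM \cX\rangle$ for all $\cX$ forces $\bdiag(\widehat{\cA})\succeq 0$.

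Finally, I should verify that $\bdiag(\widehat{\cA})$ is a symmetric matrix, which is needed to make sense of its positive semidefiniteness. This is immediate from \Cref{def:StarM_Transpose}, since $\cA^\top = \cA$ implies $(\widehat{\cA}_{:,:,k})^\top = \widehat{\cA}_{:,:,k}$ for each $k$. The only mild obstacle is bookkeeping with the two equivalent vectorizations (blockwise versus facewise) that appear implicitly in $\bdiag$; once the ordering of entries in $\widehat{x}$ is fixed to match the block-diagonal layout of $\bdiag(\widehat{\cA})$, the argument is a direct translation between the tensor and matrix pictures.
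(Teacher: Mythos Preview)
Your proof is correct and uses essentially the same ingredients as the paper's argument: Lemma~\ref{lem:Trace_transpose}(3) for the isometry, the facewise structure of the $\starM$-product, and the bijectivity of $\cX \mapsto \widehat{\cX}$. The main difference is presentational: you establish the single identity $\langle \cX, \cA\starM\cX\rangle = \widehat{x}^\top\bdiag(\widehat{\cA})\widehat{x}$ up front and read off both directions at once, whereas the paper treats the two implications separately (constructing test tensors $\cX^{(k)}$ supported on a single transform-domain slice for the forward direction, and routing through $\star_I$-PSD for the converse). Your unified version is arguably cleaner, but the underlying mathematics is the same.
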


\begin{proof} Recall that a block diagonal matrix is PSD if and only if each block along the diagonal is PSD. The bidirectional proof proceeds as follows:

\begin{itemize}
\item[$(\Longrightarrow)$:]
If $\cA$ is $\starM$-PSD, then $\langle \cX, \cA \starM \cX \rangle \ge 0$ for all $\cX\in \Rbb_M^{n}$. 
Given $x \in \bbR^n$ and $k \in [n_3]$, construct $\cX^{(k)} = \widehat{\cX}^{(k)} \times_3 M^\top$ where 
	\begin{align}
		\widehat{\cX}_{:,:,\ell}^{(k)} = \begin{cases} x, & \ell = k\\ 0, & \ell \not=k \end{cases}.
	\end{align}

Then, by~\Cref{lem:Trace_transpose} and~\eqref{eq:inner_prod_facewise}, $\langle \cX^{(k)}, \cA \starM \cX^{(k)} \rangle = \langle x, \widehat{\cA}_{:,:,k} x\rangle \ge 0$ for all $k \in [n_3]$ and all $x\in \Rbb^n$. 
Thus, $\widehat{\cA}_{:,:,k} \succeq 0$ for all $k\in [n_3]$. 

\item[$(\Longleftarrow)$:] If each frontal slice of $\widehat{\cA}$ is PSD, then $\widehat{\cA}$ is $\star_I$-PSD by~\eqref{eq:inner_prod_facewise}. So, for $\cX \in \bbR^{n}_M$, we have by~\Cref{lem:Trace_transpose} that
\begin{align}
0 \leq \langle \cX, \hat{\cA}\star_I \cX \rangle = \langle \cX, (\cA\starM (\cX \times_3 M^\top))\times_3 M\rangle = \langle \cX \times_3 M^\top, \cA \starM (\cX \times_3 M^\top)\rangle. 
\end{align}

It follows that $\cA$ is $\starM$-PSD. 
\end{itemize}

\end{proof}

\subsubsection{Factorization of $\starM$-PSD tensors}\label{sec:Factor} 
Classical factorizations for PSD matrices, such as the Cholesky factorization, are central to mathematics. 
We provide a similar factorization here, starting with the case of $f$-diagonal tensors. 
Recall that a diagonal matrix $D \in D_n(\bbR)$ is positive semidefinite if and only if each element along the diagonal is nonnegative, or equivalently, each element along the diagonal is a square in $\bbR$. Since the ring of tubes $\bbR_M$ does not have an ordering, the analogous result for $f$-diagonal tensors asserts that the tubes along the diagonal are squares in the ring $\bbR_M$. 

\begin{lemma}\label{lem:diag_PSD}
    Fix an orthogonal matrix $M \in \Ogroup_{n_3}(\bbR)$. If $\cA \in \bbR_M^{n\times n}$ is an f-diagonal tensor with tubes $\cA_{i,i,:} = \ba_i$ then $\cA$ is $\starM$-PSD if and only if each $\ba_i$ is a square in the ring of tubes $\bbR_M$. 
\end{lemma}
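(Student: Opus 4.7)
The plan is to reduce the claim to the characterization of squares in $\Rbb_M$ given in \Cref{lem:squares} by passing to the transform domain via \Cref{prop:slices}. The key observation is that the mode-$3$ product does not mix the $(i,j)$ indices, so f-diagonality is preserved under $\times_3 M$.

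First, I would verify that $\widehat{\cA} = \cA \times_3 M$ is itself f-diagonal. Indeed, for $i\ne j$, the definition \eqref{eq:mode3} gives $\widehat{\cA}_{i,j,:} = \tube(M\vec(\cA_{i,j,:})) = \tube(M \cdot 0) = {\bf 0}$, since the f-diagonal hypothesis forces $\cA_{i,j,:} = {\bf 0}$. Thus each frontal slice $\widehat{\cA}_{:,:,k}$ is a diagonal matrix in $\Dgroup_n(\Rbb)$, whose $(i,i)$ entry equals the $k$-th coordinate of $\vec(\widehat{\ba}_i)$ where $\widehat{\ba}_i = \ba_i \times_3 M$.

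Next, I would apply \Cref{prop:slices} to conclude that $\cA \succeq_M 0$ if and only if every frontal slice $\widehat{\cA}_{:,:,k}$ is PSD. Because each such slice is diagonal, this PSD condition is equivalent to the nonnegativity of all its diagonal entries. Collecting these inequalities across $k \in [n_3]$ for a fixed $i$, the condition becomes exactly $\vec(\widehat{\ba}_i) \in \Rbb_+^{n_3}$ for every $i \in [n]$.

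Finally, I would invoke \Cref{lem:squares}, which states that an element $\ba \in \Rbb_M$ is a square if and only if $\vec(\widehat{\ba}) \in \Rbb_+^{n_3}$. Applying this to each $\ba_i$ yields the equivalence: $\cA$ is $\starM$-PSD if and only if each diagonal tube $\ba_i$ is a square in $\Rbb_M$. There is no substantial obstacle here; the only point worth stating carefully is the observation that $\times_3 M$ preserves f-diagonality, which makes the reduction to the scalar characterization of squares clean.
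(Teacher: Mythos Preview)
Your proof is correct but takes a different route from the paper. The paper argues directly from \Cref{def:MPSD}: for $(\Rightarrow)$ it plugs in carefully chosen test tensors $\cX$ (with a single nonzero tube $\tube(M^\top e_\ell)$ in position $i$) to extract the inequality $(M\vec(\ba_i))_\ell \ge 0$ one coordinate at a time, and for $(\Leftarrow)$ it writes $\ba_i = \bb_i \starM \bb_i$ and computes $\langle \cX, \cA\starM\cX\rangle = \sum_i \|\bb_i\starM\bx_i\|^2 \ge 0$. You instead invoke \Cref{prop:slices}, observe that $\times_3 M$ preserves f-diagonality, and reduce immediately to the scalar fact that a diagonal matrix is PSD iff its diagonal is nonnegative, then finish with \Cref{lem:squares}. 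Your route is shorter and cleanly modular, since \Cref{prop:slices} has already absorbed the work of choosing test vectors; the paper's route is more self-contained and avoids any dependence on \Cref{prop:slices}, which may be why the authors preferred it as a building block for \Cref{prop:square_root}. There is no circularity in your argument, as \Cref{prop:slices} is proved earlier and does not rely on this lemma.
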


\begin{proof} The bidirectional proof proceeds as follows.
\begin{itemize}
\item[$(\Longrightarrow)$:] Suppose that $\cA$ is $\starM$-PSD. Fix $i \in [n]$ and $\ell \in [n_3]$ and set $\cX \in \bbR^{n \times 1 \times n_3  }$ to have tubes 
\begin{align}
\bx_{j} = \begin{cases} \tube(M^\top e_\ell), & j = i\\ 0, & j \not = i
\end{cases}.
\end{align}
where $e_\ell$ is the $\ell^{\text{th}}$ standard basis vector of $\bbR^{n_3}$. Note that the tubes $\bx_j$ are defined so that the only nonzero entry of $\widehat{\cX}$ is $\widehat{\cX}_{i,1,\ell} = 1$. 
Then, 
\begin{align}
\langle \cX, \cA \starM \cX \rangle = (M^\top e_{\ell})^\top M^\top \diag(M\vec(\ba_i))e_\ell = e_\ell^\top \diag(M\vec(\ba_i)) e_{\ell}.
\end{align}
Since $\cA$ is $\starM$-PSD, it follows that $(M \vec(\ba_i))_{\ell} \geq 0$. Because $\ell$ was arbitrary, this implies that $\ba_i$ is a square in the ring of tubes by Lemma \ref{lem:squares}.

\item[$(\Longleftarrow)$:] If each $\ba_i = \bb_i \starM \bb_i$ for some $\bb_i \in \bbR_M$, then for any $\cX \in \bbR_M^n$, we have 
\begin{align}
\langle \cX, \cA \starM \cX \rangle = \sum_{i = 1}^{n} \langle \bx_i, \ba_i \starM \bx_i\rangle
= \sum_{i = 1}^{n} \langle \bb_i \starM \bx_i, \bb_i \starM \bx_i \rangle = \|\bb_i \starM \bx_i\|^2
\geq 0. 
\end{align}
\end{itemize}
\end{proof}

By passing to the $\starM$-SVD of $\cA$, we are able to conclude that all $\starM$-PSD tensors have the desired factorization. Additionally, passing to the $\starM$-SVD of $\cA$ connects the decomposition $\cA = \cB \starM \cB^\top$ to the $\starM$-rank of $\cA$ in a way that is analogous to PSD matrices. 

\begin{proposition}[$\starM$-PSD Square Root]\label{prop:square_root}
Fix $M \in \Ogroup_{n_3}(\bbR)$. Then, a $\starM$-symmetric tensor $\cA \in \bbR_M^{n \times n}$ is $\starM$-PSD if and only if $\cA = \cB \star_M \cB^\top$ for some tensor $\cB \in \bbR_M^{n \times r}$, where $r = \starMrank(\cA)$. 
\end{proposition}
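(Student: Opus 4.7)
The strategy is to leverage the $\starM$-SVD (\Cref{thm:SVD}) and reduce both directions, via the transform domain, to the familiar matrix fact that a symmetric matrix is PSD if and only if it admits a factorization $BB^\top$.

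For the $(\Longleftarrow)$ direction, I would verify directly that if $\cA = \cB \starM \cB^\top$, then for any $\cX \in \bbR_M^{n}$ we have $\langle \cX, \cA \starM \cX\rangle = \|\cB^\top \starM \cX\|_F^2 \ge 0$. This reduces to the adjoint identity $\langle \cX, \cB \starM \cY\rangle = \langle \cB^\top \starM \cX, \cY\rangle$, which follows by transferring to the transform domain using \Cref{lem:Trace_transpose}(3) and applying the standard matrix adjoint identity facewise; here, one uses that $(\cB^\top \times_3 M)_{:,:,k} = ((\cB \times_3 M)_{:,:,k})^\top$ by \Cref{def:StarM_Transpose}.

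For the $(\Longrightarrow)$ direction, I would apply the $\starM$-SVD together with the remark that a $\starM$-symmetric tensor admits a decomposition $\cA = \cU \starM \cS \starM \cU^\top$ with $\cU$ $\starM$-orthogonal and $\cS$ f-diagonal. Because $\cA \succeq_M 0$, \Cref{prop:slices} guarantees that each frontal slice $\widehat{\cA}_{:,:,k}$ is PSD; combined with orthogonality of $\widehat{\cU}_{:,:,k}$, this forces $\widehat{\cS}_{i,i,k} \ge 0$ for all $i$ and $k$. I would then define $\cS^{1/2}$ facewise in the transform domain by $\widehat{\cS^{1/2}}_{i,i,k} = \sqrt{\widehat{\cS}_{i,i,k}}$, truncate to the first $r = \starMrank(\cA)$ lateral slices, and set $\cB = \cU_{:,1:r,:} \starM \cS^{1/2}_{1:r,1:r,:} \in \bbR_M^{n\times r}$. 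A facewise verification in the transform domain then yields $\cA = \cB \starM \cB^\top$.

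The only care needed is the rank bookkeeping: the $\starM$-rank (\Cref{def:starm_rank}) counts nonzero tubes $\cS_{i,i,:}$, so the individual matrix ranks of $\widehat{\cA}_{:,:,k}$ may differ from $r$. This is not an obstruction, since by construction only the first $r$ singular tubes are possibly nonzero and the remaining tubes contribute nothing to the product $\cU \starM \cS \starM \cU^\top$. As a conceptually cleaner alternative, one could first show that $\cS = \cU^\top \starM \cA \starM \cU$ inherits $\starM$-PSDness from $\cA$ using the adjoint identity above, and then apply \Cref{lem:diag_PSD} directly to obtain tubal square roots $\cS_{i,i,:} = \bs_i \starM \bs_i$, which assemble into $\cS^{1/2}$ and produce the same factorization.
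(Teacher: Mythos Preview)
Your proposal is correct and follows essentially the same route as the paper: use the $\starM$-SVD in the form $\cA = \cU \starM \cS \starM \cU^\top$, extract square roots of the diagonal tubes, and set $\cB = \cU \starM \cD$ with $\cD$ built from those roots. The paper's $(\Longrightarrow)$ argument is precisely your ``conceptually cleaner alternative'': it shows $\cS = \cU^\top \starM \cA \starM \cU$ is $\starM$-PSD via the substitution $\cX \mapsto \cU \starM \cX$ and then invokes \Cref{lem:diag_PSD}, rather than appealing to \Cref{prop:slices} directly as in your primary sketch. For $(\Longleftarrow)$, the paper instead passes through the matrix representative and \Cref{prop:MPSD_BlockMat}, whereas your adjoint/norm computation $\langle \cX, \cB \starM \cB^\top \starM \cX\rangle = \|\cB^\top \starM \cX\|_F^2$ is a slightly more self-contained variant; both are valid and equally short.
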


\begin{proof} The bidirectional proof proceeds as follows.
\begin{itemize}
	\item[$(\Longrightarrow)$:] 
		From~\Cref{thm:SVD}, the $\starM$-SVD is given by $\cA = \cU \starM \cS \starM \cU^\top$. Then, for all $\cX \in \bbR^{n \times 1 \times n_3}$,

\begin{subequations}
\begin{align}
0 &\leq \langle (\cU\starM\cX) , \cA \starM (\cU\starM\cX) \rangle\\ 
&= \langle (\cU\starM\cX) , (\cU \starM \cS \starM \cU^\top) \starM (\cU\starM\cX) \rangle\\ 
&= \langle \cU^\top \starM (\cU\starM\cX), \cS \starM \cU^\top \starM (\cU\starM\cX) \rangle\\ &= \langle \cX, \cS \starM \cX \rangle.
\end{align}
\end{subequations}
		Following~\eqref{eq:facewise_svd} and~\Cref{lem:diag_PSD}, the nonzero tubes of $\cS$ are squares in $\Rbb_M$; that is, $\cS_{i,i,:} = \bfd_i \starM \bfd_i$ for some $\bfd_i\in \Rbb_M$ for $i\in [r]$.  
		Thus, we can write $\cS = \cD \starM \cD$ where $\cD \in \bbR_M^{n\times r}$ and $\cD_{i,i,:} = \bfd_i$ for $i\in [r]$. 
		Setting $\cB = \cU \starM \cD$ completes the proof. 
	\item[$(\Longleftarrow)$:] 
		From the matrix representative in~\eqref{eq:tensor_mat_full}, we have $\cB \starM \cB^\top \equiv (I_n \otimes M^\top) \mymat_M(\cB)\mymat_M(\cB)^\top (I \otimes M)$.  
		The matrix representative is PSD and thus $\cB\starM \cB^\top$ is $\starM$-PSD following \Cref{prop:MPSD_BlockMat}. 
\end{itemize}
\end{proof}

\subsubsection{Square principal minors of $\starM$-PSD Tensors}\label{sec:det} 
One important fact about PSD matrices is that all principal minors are nonnegative. This provides a system of polynomial inequalities which describes the PSD cone. 
In order to generalize this fact to tensors equipped with the $\starM$-product, we need a notion of $\starM$-determinant for tensors. 
Fix $M\in \Ogroup_{n_3}(\Rbb)$. 
For any $\cA \in \bbR_M^{n \times n}$, we define the $\starM$-determinant  as
\begin{align}\label{eq:M_det}
	\mydet_M(\cA) = \sum_{\sigma \in S_n} \sgn(\sigma)\cA_{1, \sigma(1),:}\starM \cA_{2,\sigma(2),:} \starM \cdots \starM \cA_{n,\sigma(n),:}
\end{align}
where $S_n$ is the order-$n$ symmetric group and $\sigma\in S_n$ is a permutation. 
The sign of a permutation $\sigma \in S_n$ is $\sgn(\sigma) = 1$ if $\sigma$ can be written as an even number of transpositions and $\mathrm{sgn}(\sigma) = -1$ otherwise\footnote{For example, in $S_3$,  the permutation $\sigma = (2 3)$ is odd (one swap of second and third entry; see e.g., \cite{dummit_abstract_2009} for cycle notation) with $\sgn(\sigma) = -1$. In comparison, $\sigma = (132) = (12)(13)$ is even (two swaps of first and third, then first and second entries) with $\sgn(\sigma) = 1$.}.  
Note that $\mydet_M(\cA) \in \bbR_M$ is a tube and that~\eqref{eq:M_det} is consistent with the point of view that $\cA$ is an $\bbR_M$-linear transformation of $\bbR_M$-modules. 
An equivalent facewise version of~\eqref{eq:M_det} is
	\begin{align}\label{eq:vec_detM}
	\mydet_M(\cA) \equiv M^{\top}\begin{bmatrix} \mydet(\widehat{\cA}_{:,:,1}) & \cdots & \mydet(\widehat{\cA}_{:,:,n_3})\end{bmatrix}^\top.
	\end{align}
This viewpoint is comparable to the $t$-product tubal determinant defined in \cite{el_hachimi_spectral_2023}.

We now have the tools to define a $\starM$-principal minor. 
Let $J = \{j_1,j_2,\ldots, j_\ell\} \subseteq [n]$ be a subset of indices with $1\le j_1 < j_2 < \cdots < j_\ell \le n$.  
Then, the $\starM$-principal minor of order $\ell$ indexed by $J$ is 
	\begin{align}\label{eq:principal_minor}
	\mydet_M(\cA_{J,J}) = \sum_{\sigma \in S_\ell} \mathrm{sgn}(\sigma)\cA_{j_1, j_{\sigma(1)},:}\starM \cA_{j_2, j_{\sigma(2)},:} \starM \ldots \starM \cA_{j_\ell, j_{\sigma(\ell)},:}.
	\end{align}

Using the definition \eqref{eq:principal_minor}, the following proposition generalizes the fact that PSD matrices have nonnegative principal minors to the $\starM$-PSD tensor setting. 

\begin{proposition}[Square $\starM$-Principal Minors]\label{prop:M_det_square}
Fix $M\in \Ogroup_{n_3}(\Rbb)$. 
A symmetric tensor $\cA \in \bbR_M^{n \times n}$ is $\starM$-PSD if and only if for each $J \subseteq [n]$, the principal minor $\mydet_M(\cA_{J,J})$ is a square in $\Rbb_M$. 
\end{proposition}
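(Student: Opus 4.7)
The plan is to reduce the statement to the classical fact that a symmetric matrix is PSD if and only if all of its principal minors are nonnegative, and then transfer this across the mode-$3$ transform using the tools already developed.

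First, I would observe that extracting a principal submatrix commutes with the mode-$3$ transform: since $\times_3 M$ acts only along the tubal direction, for any index set $J \subseteq [n]$ we have
\begin{equation}
\widehat{\cA_{J,J}}_{:,:,k} \;=\; \bigl(\widehat{\cA}_{:,:,k}\bigr)_{J,J} \qquad \text{for all } k \in [n_3].
\end{equation}
Combining this with the facewise formula \eqref{eq:vec_detM} applied to the subtensor $\cA_{J,J} \in \bbR_M^{|J|\times |J|}$ yields
\begin{equation}
\vec\bigl(\mydet_M(\cA_{J,J})\bigr) \;=\; M^\top \begin{bmatrix} \det\!\bigl((\widehat{\cA}_{:,:,1})_{J,J}\bigr) & \cdots & \det\!\bigl((\widehat{\cA}_{:,:,n_3})_{J,J}\bigr)\end{bmatrix}^\top .
\end{equation}
Since $M$ is orthogonal, applying $\times_3 M$ to the tube $\mydet_M(\cA_{J,J})$ produces the vector of frontal-slice principal minors $\det\!\bigl((\widehat{\cA}_{:,:,k})_{J,J}\bigr)$, $k \in [n_3]$. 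By \Cref{lem:squares}, the tube $\mydet_M(\cA_{J,J})$ is a square in $\bbR_M$ if and only if every entry of this vector is nonnegative, i.e., every frontal-slice principal minor is nonnegative.

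Next, I would invoke \Cref{prop:slices}: $\cA \succeq_M 0$ if and only if $\widehat{\cA}_{:,:,k}$ is a PSD matrix for every $k \in [n_3]$. Since $\cA$ is $\starM$-symmetric, each $\widehat{\cA}_{:,:,k}$ is a real symmetric matrix (by \Cref{def:StarM_Transpose}). The classical characterization of symmetric PSD matrices in terms of principal minors \cite[Appendix A.1]{blekherman_semidefinite_2012} then states that $\widehat{\cA}_{:,:,k} \succeq 0$ if and only if $\det\!\bigl((\widehat{\cA}_{:,:,k})_{J,J}\bigr) \geq 0$ for every $J \subseteq [n]$.

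Putting the pieces together gives the equivalence: $\cA \succeq_M 0$ iff every frontal slice $\widehat{\cA}_{:,:,k}$ is a symmetric PSD matrix, iff every principal minor $\det\!\bigl((\widehat{\cA}_{:,:,k})_{J,J}\bigr)$ is nonnegative for all $k$ and all $J$, iff the tube $\mydet_M(\cA_{J,J})$ is a square in $\bbR_M$ for every $J \subseteq [n]$. The only substantive step is the commutation of principal submatrix extraction with the mode-$3$ transform; everything else is an immediate appeal to the results already established in the paper plus the classical matrix fact.
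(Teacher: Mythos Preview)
Your proposal is correct and follows essentially the same route as the paper: both reduce to the facewise determinant formula \eqref{eq:vec_detM}, invoke \Cref{lem:squares} to translate ``square in $\bbR_M$'' into nonnegativity of the frontal-slice minors, and then appeal to \Cref{prop:slices} together with the classical principal-minor characterization of PSD matrices. Your explicit remark that principal-submatrix extraction commutes with $\times_3 M$ is the only step the paper leaves implicit in its notation $\widehat{\cA}_{J,J,k}$.
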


\begin{proof}
Let $J = \{j_1,j_2,\ldots, j_\ell\} \subseteq [n]$ with $1\le j_1 < j_2 < \cdots < j_\ell \le n$.  
By~\eqref{eq:principal_minor} and~\eqref{eq:vec_detM}, we have 
	\begin{align}
	\mydet_M(\cA_{J,J}) \equiv M^{\top}\begin{bmatrix} \mydet(\widehat{\cA}_{J,J,1}) & \mydet(\widehat{\cA}_{J,J,2}) & \cdots & \mydet(\widehat{\cA}_{J,J,n_3})\end{bmatrix}^\top.
	\end{align}
\begin{itemize}

\item[$(\Longrightarrow):$]By~\Cref{prop:slices}, each frontal slice $\widehat{\cA}_{:,:,k}$ is a PSD matrix and therefore each minor $\mydet(\hat{\cA}_{J,J,\ell})$ is nonnegative. 
It then follows from~\Cref{lem:squares} that $\mydet_M(\cA_{J,J})$ is a square in $\bbR_M$. 

\item[$(\Longleftarrow):$]Conversely, if  $\mydet_M(\cA_{J,J})$ is a square in $\bbR_M$ for any ordered set of indices $J$, then by~\Cref{lem:squares}, $\mydet(\widehat{\cA}_{J,J,k}) \geq 0$ for each $k \in [n_3]$.
Thus, every principal minor of the frontal slice $\widehat{\cA}_{:,:,\ell}$ is nonnegative  and therefore $\widehat{\cA}$ has PSD frontal slices. 
By~\Cref{prop:slices}, this implies $\cA \in \myPSD_M^n$.   
\end{itemize}
\end{proof}

\subsubsection{Eigentubes of $\starM$-PSD Tensors}\label{sec:eigentube} A final characterization of positive semidefinite matrices is that all eigenvalues are nonnegative. However, because the rings $\bbR_M$ have zero-divisors (nonzero elements whose product is zero), the $\starM$-PSD case is more subtle. We obtain a weaker statement regarding the relationship between $\starM$-PSD tensors and tubes which act analogously to eigenvalues. The spectral theory of tensors equipped with the $t$-product was developed in \cite{el_hachimi_spectral_2023}; see also \cite{gleich_power_2013}. Note also that a notion of spectral theory for higher order tensors which does not depend on a tensor-tensor product has been studied in the literature; see e.g. \cite{qi_tensor_2017} for an overview.

\begin{proposition}\label{prop:square_eigentubes}
    Fix $M\in \Ogroup_{n_3}(\Rbb)$. 
    Suppose that $\cA \in \myPSD_M^n$. Let $\bflambda\in \bbR_M$ and $\cX \in \bbR_M^{n}$ be such that $\cA \starM \cX = \bflambda \starM \cX$ and that all frontal slices of $\widehat{\cX}$ are nonzero. Then $\bflambda$ is a square in $\bbR_M$. 
\end{proposition}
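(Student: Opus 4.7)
The plan is to reduce the eigentube equation to a statement about ordinary matrix eigenvalues by passing to the transform domain, and then invoke \Cref{lem:squares} to conclude that $\bflambda$ is a square in $\bbR_M$.

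First, I would apply $\times_3 M$ to both sides of $\cA \starM \cX = \bflambda \starM \cX$. By \Cref{alg:Mprod}, the $\starM$-product operates facewise in the transform domain, so on each frontal slice $k \in [n_3]$ this yields
\begin{equation*}
\widehat{\cA}_{:,:,k}\,\widehat{\cX}_{:,:,k} \;=\; \widehat{\bflambda}_{1,1,k}\,\widehat{\cX}_{:,:,k},
\end{equation*}
where the right-hand side reduces to ordinary scalar multiplication because $\bflambda$ is a tube. Since all frontal slices $\widehat{\cX}_{:,:,k}$ are nonzero by hypothesis, this says that $\widehat{\bflambda}_{1,1,k}$ is a (matrix) eigenvalue of $\widehat{\cA}_{:,:,k}$ with eigenvector $\widehat{\cX}_{:,:,k}$ for every $k \in [n_3]$.

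Next, I would invoke \Cref{prop:slices}: because $\cA \in \myPSD_M^n$ with $M \in \Ogroup_{n_3}(\Rbb)$, each frontal slice $\widehat{\cA}_{:,:,k}$ is a (matrix) PSD matrix. All eigenvalues of a PSD matrix are nonnegative, so $\widehat{\bflambda}_{1,1,k} \geq 0$ for all $k \in [n_3]$. Equivalently, $\vec(\widehat{\bflambda}) \in \bbR^{n_3}_+$.

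Finally, an application of \Cref{lem:squares} immediately gives that $\bflambda$ is a square in $\bbR_M$, completing the argument. There is no real obstacle here; the mild subtlety worth stating explicitly is that multiplication by a tube $\bflambda$ diagonalizes in the transform domain to scalar multiplication by $\widehat{\bflambda}_{1,1,k}$ on the $k$-th face, which is what allows the eigentube relation to collapse to $n_3$ independent matrix eigenvalue equations.
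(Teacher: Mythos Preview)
Your proposal is correct and follows essentially the same route as the paper's own proof: pass to the transform domain so the eigentube equation becomes $n_3$ ordinary matrix eigenvalue equations, use the nonvanishing of each $\widehat{\cX}_{:,:,k}$ to conclude $\widehat{\bflambda}_{1,1,k}$ is a genuine eigenvalue, invoke \Cref{prop:slices} for nonnegativity, and finish with \Cref{lem:squares}.
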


\begin{proof}
If $\cA \starM \cX = \bflambda \starM \cX$, then in the transform domain, we have $\widehat{\cA} \smalltriangleup \widehat{\cX} = \widehat{\bflambda}\smalltriangleup\widehat{\cX}$, where $\smalltriangleup$ denotes the facewise product. It then follows that for each $k \in [n_3]$, 
	\begin{align}
	\widehat{\cA}_{:,:,k}\widehat{\cX}_{:,1,k} = \widehat{\bflambda}_{1,1,k}\widehat{\cX}_{:,1,k}.
	\end{align}
By the assumption that all frontal slices of $\widehat{\cX}$ are nonzero, this implies that $\widehat{\bflambda}_{1,1,k}$ is an eigenvalue of $\hat{\cA}_{:,:,k}$ for each $\ell \in [n_3]$. 
By~\Cref{prop:slices}, the matrix $\widehat{\cA}_{:,:,k}$ is positive semidefinite and therefore $\widehat{\bflambda}_{1,1,k}\geq 0$. It therefore follows from~\Cref{lem:squares} that $\bflambda$ is a square in $\bbR_M$. 
\end{proof}

Note that the requirement that all frontal slices of $\widehat{\cX}$ are nonzero is necessary for the conclusion of Proposition \ref{prop:square_eigentubes} to hold. For example, let $\cA \in \bbR^{2\times 2 \times 2}$ and $\cX \in \bbR^{2\times 1 \times 2}$ be given facewise by
	\begin{align}
	\cA_{:,:,1} = \cA_{:,:,2} = \begin{bmatrix} 1 & 0 \\ 0 & 1 \end{bmatrix}, \quad \cX_{:,1,1} = \begin{bmatrix} 1\\ 1\end{bmatrix}, \quad \text{and} \quad  \cX_{:,1,2} = \begin{bmatrix}0\\0 \end{bmatrix}. 
	\end{align}
Then, $\cA \in \myPSD_I^2$, where $I$ is the $2\times 2$ identity matrix, and $\cA \star_I \cX = \bflambda \star_I \cX$ for any $\bflambda\in \bbR_{I}$ with $\lambda_{1,1,1} = 1$. 
For example, $\bflambda = \tube(\begin{bmatrix} 1 & -1\end{bmatrix}^\top)$ satisfies the eigentube equation, but $\bflambda$ is not a square in $\Rbb_M$ because it contains a negative entry. 

\subsection{$M$-Semidefinite Programming}

Using the above description of $\starM$-PSD tensors, we define $M$-semidefinite programs to be the optimization of a linear functional over an affine slice of the cone $\myPSD_M^n$. 

\begin{tcolorbox}[colback=EmoryGold!5, colframe=EmoryGold]
\begin{defn}[$M$-SDP]
Let $M \in \Ogroup_{n_3}(\Rbb)$ and fix symmetric tensors $\cC \in \bbR_M^{n \times n}$ and $\cA^{(\ell)} \in \bbR_M^{n \times n}$  and scalars $b^{(\ell)}\in \Rbb$ for $\ell \in [L]$. 
An \emph{$M$-semidefinite programming problem} (M-SDP) is an optimization problem of the form
\begin{align}\label{eq:PMSDP}\tag{M-SDP}
\max_{\cX}\ \langle \cC, \cX \rangle \quad \text{s.t.} \quad \text{$\langle \cA^{(\ell)}, \cX \rangle = b^{(\ell)}$ for $\ell \in [L]$  and  $\cX \in \myPSD_M^n$}.
\end{align}
\end{defn}
\end{tcolorbox}

Problems of the form \eqref{eq:PMSDP} reduce to classical semidefinite programming in the case where $n_3 = 1$. 
On the other extreme, \eqref{eq:PMSDP} is equivalent to linear programming in $\bbR^{n_3}$ when $n = 1$ and $M = I$. In between these extreme cases, \eqref{eq:PMSDP} can be thought of as a standard SDP with $n_3$ matrix variables (or, equivalently, a single $nn_3 \times nn_3$ block diagonal matrix variable with blocks of size $n\times n$). 
Indeed, by~\Cref{prop:slices},  the condition $\cX \in \myPSD_M^n$ can be checked by the positive semidefiniteness of $\bdiag(\hat{\cX})$. 

In the special case where the affine constraints $\langle \cA^{(\ell)}, \cX \rangle = b^{(\ell)}$ are defined by tensors $\cA^{(\ell)}$ such that $\bdiag(\widehat{\cA^{(\ell)}})$ has only one nonzero $n\times n$ block (as illustrated in Figure \ref{fig:tsdp_constraint}), the problem \eqref{eq:PMSDP} can be solved as $n_3$ \emph{independent} matrix SDPs, resulting in reduced computational requirements. This fact is recorded below in Proposition \ref{prop:MSDP_block_variable} and heavily utilized in our numerical experiments on tubally sparse tensor completion in Section \ref{sec:Completion_Experiments}. 

\begin{figure}
    \centering
    \includegraphics[width=0.5\linewidth]{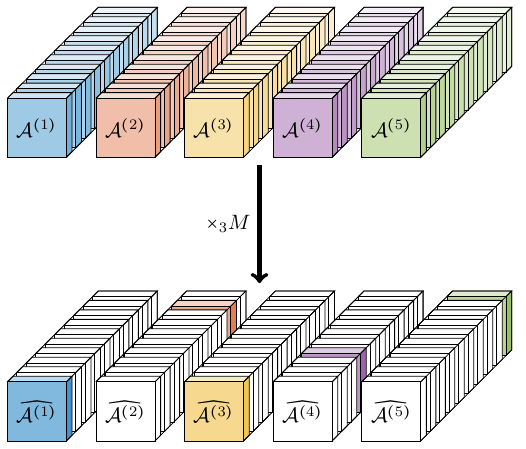}
    \caption{Illustration of linear constraint tensors for~\eqref{eq:PMSDP} that lead to $n_3$ independent matrix SDPs in the transform domain. 
    For illustration purposes, we consider $L = 5$ constraint tensors, each one depicted in a different color. 
    In the transform domain, for each $\ell\in [L]$, the tensor $\widehat{\Acal^{(\ell)}}$ has only one nonzero frontal slice, resulting in $\bdiag(\widehat{\Acal^{(\ell)}})$ having only one $n\times n$ nonzero block. 
    Note that the same frontal slices could be nonzero for different constraints, as in the case of $\widehat{\Acal^{(1)}}$ and $\widehat{\Acal^{(3)}}$.}
    \label{fig:tsdp_constraint}
\end{figure}

\begin{proposition}\label{prop:MSDP_block_variable}
Fix $M \in \Ogroup_{n_3}(\Rbb)$ and consider solving an $M$-semidefinite program of the form~\eqref{eq:PMSDP}. Suppose that all constraint tensors $\cA^{(\ell)}$ satisfy the property that $\widehat{\cA^{(\ell)}}$ has only one nonzero frontal slice, $\widehat{\cA^{(\ell)}}_{:,:,k_{\ell}}$ for some $k_{\ell} \in [n_3]$. Then, \eqref{eq:PMSDP} can be solved via $n_3$ independent matrix SDPs with decision variables of size $n\times n$. 

Explicitly, for each frontal slice $k \in [n_3]$, let $X^{(k)}$ be an optimal solution to the matrix SDP
\begin{align}
\gamma_{k} = \max_{X} \langle \widehat{C}_{:,:,k}, X \rangle \quad \text{s.t.} \quad \text{$\langle \widehat{\cA^{(\ell)}}_{:,:,k_{\ell}}, X \rangle = b^{(\ell)}$ for $\ell \in \{\ell \in [L] \; \vert \; k_{\ell} = k\}$  and  $X \succeq 0$}.
\end{align}
Then, the tensor $\cX$ such that $\widehat{\cX}_{:,:,k} = X^{(k)}$ for each $k \in [n_3]$ is an optimal solution to \eqref{eq:PMSDP} and the corresponding optimal objective value is $\gamma = \sum_{k = 1}^{n_3}\gamma_k$. 
\end{proposition}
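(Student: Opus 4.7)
The plan is to transform the entire problem to the transform domain using the orthogonality of $M$, and then observe that both the PSD constraint (by Proposition \ref{prop:slices}) and all linear constraints (by the sparsity hypothesis on the $\widehat{\cA^{(\ell)}}$) decouple facewise, so the objective must decouple as well.

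First, I would rewrite each ingredient of \eqref{eq:PMSDP} in the transform domain. By part (3) of \Cref{lem:Trace_transpose}, orthogonality of $M$ gives $\langle \cC, \cX\rangle = \langle \widehat{\cC}, \widehat{\cX}\rangle$ and $\langle \cA^{(\ell)}, \cX\rangle = \langle \widehat{\cA^{(\ell)}}, \widehat{\cX}\rangle$ for each $\ell \in [L]$. Using the facewise formula \eqref{eq:inner_prod_facewise}, these expand as sums over $k\in[n_3]$ of matrix inner products of frontal slices. Since $\widehat{\cA^{(\ell)}}$ has only one nonzero frontal slice (at index $k_\ell$), the $\ell$th constraint reduces to the single matrix equation $\langle \widehat{\cA^{(\ell)}}_{:,:,k_\ell},\widehat{\cX}_{:,:,k_\ell}\rangle = b^{(\ell)}$, which involves only the $k_\ell$th frontal slice of $\widehat{\cX}$.

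Next, by \Cref{prop:slices}, the cone constraint $\cX \in \myPSD_M^n$ is equivalent to $\widehat{\cX}_{:,:,k}\succeq 0$ for every $k\in[n_3]$, with each block independent of the others. Combining these observations, \eqref{eq:PMSDP} becomes
\begin{align*}
    \max_{\widehat{\cX}_{:,:,1},\ldots,\widehat{\cX}_{:,:,n_3}} \sum_{k=1}^{n_3}\langle \widehat{\cC}_{:,:,k},\widehat{\cX}_{:,:,k}\rangle \quad \text{s.t.} \quad \widehat{\cX}_{:,:,k}\succeq 0 \text{ for all } k, \text{ and } \langle \widehat{\cA^{(\ell)}}_{:,:,k_\ell},\widehat{\cX}_{:,:,k_\ell}\rangle=b^{(\ell)} \text{ for all } \ell.
\end{align*}
Since both the objective and the feasible set are now separable across the index $k$, the supremum splits as the sum of the suprema of the $n_3$ individual matrix SDPs indicated in the statement, proving that $\gamma = \sum_{k=1}^{n_3}\gamma_k$ and that assembling the individual optimizers $X^{(k)}$ into $\widehat{\cX}$ (and setting $\cX = \widehat{\cX}\times_3 M^\top$) yields an optimal solution.

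There is no real obstacle here; the argument is essentially bookkeeping that relies on three facts already established: orthogonal invariance of the Frobenius inner product (\Cref{lem:Trace_transpose}(3)), its additivity across frontal slices \eqref{eq:inner_prod_facewise}, and the transform-domain characterization of $\myPSD_M^n$ (\Cref{prop:slices}). The mildly delicate point is verifying that a tensor $\cX$ built from independent optimizers $X^{(k)}$ is actually feasible for the joint problem — this follows because each linear constraint touches only one slice, so there is no coupling to break.
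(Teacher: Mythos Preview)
Your proposal is correct and follows essentially the same approach as the paper: both arguments pass to the transform domain via \Cref{lem:Trace_transpose}(3) and \eqref{eq:inner_prod_facewise}, invoke \Cref{prop:slices} for the PSD constraint, and use the single-slice hypothesis on $\widehat{\cA^{(\ell)}}$ to see that everything decouples facewise. Your write-up is in fact slightly more explicit than the paper's about why separability yields the sum of optima and the feasibility of the assembled $\cX$.
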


\begin{proof}
By~\Cref{prop:slices}, we have $\cX \in \myPSD_M^n$ if and only if $\widehat{\cX}_{:,:,k} \succeq 0$ for each $k \in [n_3]$. Moreover, it follows from Lemma \ref{lem:Trace_transpose} and~\eqref{eq:inner_prod_facewise} that $\langle \cC, \cX \rangle = \langle \widehat{\cC}, \widehat{\cX} \rangle = \sum_{k = 1}^{n_3}\langle \widehat{\cC}_{:,:,k}, \widehat{\cX}_{:,:,k}\rangle$ and for each $\ell \in [L],$ we have $\langle \cA^{(\ell)}, \cX \rangle = \langle \widehat{\cA^{(\ell)}}, \widehat{\cX} \rangle = \sum_{k = 1}^{n_3}\langle \widehat{\cA^{(\ell)}}_{:,:,k}, \widehat{\cX}_{:,:,k}\rangle = \langle \widehat{\cA^{(\ell)}}_{:,:,k_\ell},\widehat{\cX}_{:,:,k_{\ell}}\rangle$.
\end{proof}

We conclude this section with an example highlighting the dependence on $M$ of the feasible region to \eqref{eq:PMSDP}. 

\begin{example}\label{ex:FeasibleRegions} 
Consider the $2\times 2 \times 2$ tensor $\cX$ with frontal slices
	\begin{align}	
		\cX_{:,:,1} = \begin{bmatrix} x & y\\ y & (1-x)\end{bmatrix} \qquad\text{and}\qquad \cX_{:,:,2} = \begin{bmatrix} (1-x) & y \\ y & x \end{bmatrix}.
	\end{align}
Now, $\cX \succeq_I 0$ if and only if $x(1-x)-y^2 \geq 0$. 
Thus, the set of $I$-PSD tensors with this structure is given by $(x,y)$ in the disk of radius $\frac{1}{2}$ centered at $(\frac{1}{2},0)$. 

On the other hand, if we set $\alpha = \frac{1}{\sqrt{2}}$ and $H = \alpha \begin{bmatrix} 1 & 1\\ 1 & -1 \end{bmatrix}$, then $\widehat{\cX} = \cX \times_3 H$ is 
	\begin{align}
	\widehat{\cX}_{:,:,1} = \begin{bmatrix}\alpha & 2\alpha y \\ 2\alpha y & \alpha\end{bmatrix} \qquad\text{and}\qquad \widehat{\cX}_{:,:,2} = \begin{bmatrix} \alpha(2x - 1) & 0 \\ 0 & \alpha(1-2x) \end{bmatrix}.
	\end{align}
Now, $\cX \succeq_{H} 0$ if and only if $x = \frac{1}{2}$ and $y^2 \leq \frac{1}{4}$. This results in a line segment in $\bbR^2$. 
The two regions are shown in Figure \ref{fig:FeasibleRegions}.

\begin{figure}
\centering

    \begin{subfigure}{0.4\linewidth}
    \centering
\begin{tikzpicture}
\pgfmathsetmacro{\X}{0}
\pgfmathsetmacro{\Y}{0}
\def\n{3}
\def\b{0.25}

\fill[white] (\X-2,\Y-2) rectangle (\X+\n,\Y+\n);
\draw (\X-2,\Y-2) grid (\X+\n,\Y+\n);
\draw[<->, line width=3pt] (\X-2,\Y) -- node[at end, right] {$x$} (\X+\n,\Y);
\draw[<->, line width=3pt] (\X,\Y-2) -- node[at end, above] {$y$} (\X,\Y+\n);

\node[fill=white, anchor=east] at (\X-0.25,\Y+1) {$\tfrac{1}{2}$};

\draw[blue, fill=blue!50, line width=2pt, fill opacity=0.75] (\X+1,\Y) circle (1);

\end{tikzpicture}

\subcaption{$\{(x,y) \mid \cX \succeq_I 0\}$ \vphantom{with $H = \tfrac{1}{\sqrt{2}} \begin{bmatrix} 1 & 1 \\ 1 & -1 \end{bmatrix}$}}
\end{subfigure}
\begin{subfigure}{0.4\linewidth}
    \centering
\begin{tikzpicture}
\pgfmathsetmacro{\X}{0}
\pgfmathsetmacro{\Y}{0}
\def\n{3}
\def\b{0.25}

\fill[white] (\X-2,\Y-2) rectangle (\X+\n,\Y+\n);
\draw (\X-2,\Y-2) grid (\X+\n,\Y+\n);
\draw[<->, line width=3pt] (\X-2,\Y) -- node[at end, right] {$x$} (\X+\n,\Y);
\draw[<->, line width=3pt] (\X,\Y-2) -- node[at end, above] {$y$} (\X,\Y+\n);

\node[fill=white, anchor=east] at (\X-0.25,\Y+1) {$\tfrac{1}{2}$};

\draw[magenta, line width=3pt] (\X+1,\Y+1) -- (\X+1,\Y-1);
\fill[magenta] (\X+1,\Y+1) circle (0.1);
\fill[magenta] (\X+1,\Y-1) circle (0.1);

\end{tikzpicture}
\subcaption{$\{(x,y) \mid \cX \succeq_H 0\}$ with $H = \tfrac{1}{\sqrt{2}} \begin{bmatrix} 1 & 1 \\ 1 & -1 \end{bmatrix}$.}
\end{subfigure}

\caption{The sets $\{(x,y) \; \vert \; \cX \succeq_I 0\}$ (left) and $\{(x,y) \; \vert \; \cX \succeq_H 0\}$ (right) discussed in~\Cref{ex:FeasibleRegions}. The region for which $\cX \succeq_I 0$ is a disk, while the region for which $\cX \succeq_H 0$ is a line segment.}\label{fig:FeasibleRegions} 
\end{figure}

\end{example}

\section{Invariant SDPs and a Representation Theoretic Interpretation of the $\starM$-product}\label{sec:Equivariance}

A line of work in semidefinite programming, initiated by Gaterman and Parrilo \cite{gatermann_symmetry_2004}, studies semidefinite programs that are invariant under a group action. 
Such invariant presentations introduce new theoretical insights and numerical efficiencies. 
In particular, invariant SDPs can be naturally block-diagonalized, resulting in reduced computational cost. 

In this section, we connect \eqref{eq:PMSDP} to such invariant semidefinite programs. 
To do so, we develop an interpretation of the $\starM$-product based on the representation theory of finite groups. 
Specifically, we derive conditions on a finite group $G$ such that the $\starM$-product is equivariant under group actions; that is, for any $g\in G$ and under (left) group action denoted by $\cdot$, we have $\cA \starM (g \cdot \cB) = g \cdot (\cA \starM \cB)$ for any compatibly-sized tensors $\cA$ and $\cB$. 
These conditions are concrete and can be interpreted in terms of the solution to a linear system involving the transformation matrix $M$. 
The representation theoretic results may be of independent interest, so our presentation works in greater generality than is needed for $M$-SDP. 

\subsection{Brief Background on Group Representations} 
To preface the discussion of invariant $M$-SDPs, we briefly review representations of finite groups (see, e.g., standard references~\cite{serre_linear_1977, fulton_representation_2004}).

\subsubsection{Group Representations}\label{sec:RepTheoryBackground}
Let $G$ be a finite group. 
A \emph{(linear) representation of $G$} is a group homomorphism\footnote{Given two finite groups $G$ and $H$ with multiplication $\cdot_G$ and $\cdot_H$, respectively, we call $\varphi: G \to H$ a \emph{group homomorphism} if it preserves group structure; i.e., $\varphi(g \cdot_G g') = \varphi(g) \cdot_H \varphi(g')$.} $\rho:G \to \GL(V)$ for a finite-dimensional $\bbF$-vector space $V$.  
Here, $\GL(V)$ is the group of invertible linear transformations of $V$; i.e., $\GL(V) = \{\mu: V \to V \mid \text{$\mu$ invertible}\}$ and the group law is composition. We will only be concerned with the cases $\bbF = \bbR$ and $\bbF = \bbC$. 
A representation is \emph{faithful} if the map $\rho$ is injective.
In the representation theory literature, it is common to call the vector space $V$ a \emph{representation} of $G$ without explicit reference to the map\footnote{This is convenient when discussing properties independent of the choice of coordinates. However, in what follows, we will frequently emphasize choices of basis.} $\rho$. 

Let $W\subseteq V$ be a linear subspace of a representation. 
We say $W$ is \emph{$\rho$-invariant or $G$-linear} if $\rho(g) w\in W$ for all $g\in G$ and $w\in W$. 
A representation $V$ is \emph{irreducible} if there are no proper subspaces (i.e., $W \not= \emptyset$ nor $V$) which are $\rho$-invariant. 
Every representation can be written as the direct sum of irreducible representations.   
Specifically, we can decompose $V = V_1 \oplus V_2 \oplus \cdots \oplus V_m$ where $V_i \subseteq V$ is irreducible for $i\in [m]$.  
The direct sum notation means that in an appropriate basis, any $v\in V$ can be written uniquely in block form as $v = (v_1,v_2,\dots,v_m)$ for some $v_i \in V_i$ for $i\in [m]$. 
Note that  $\dim(V) = \dim(V_1) + \dim(V_2) + \cdots + \dim(V_m)$ and that we allow $V_i$ and $V_j$ to be isomorphic as representations. If we fix an ordered basis of an $\bbF$-vector space $V$ and $\dim(V) = n$, then $\GL_n(\bbF)$ is isomorphic to $\GL(V)$. 
Hence, we can attribute an invertible matrix $\rho(g)$ to the action of each group element $g\in G$. 
Consider the fixed basis $B$ of $V$ given by
\begin{equation} B = \{v_{1}^{(1)},v_{2}^{(1)},\ldots,v_{d_1}^{(1)},v_1^{(2)},\ldots v_{d_2}^{(2)},\ldots ,v_{d_m}^{(m)}\}\end{equation}
where $d_1 + d_2 + \cdots + d_m = \dim(V)$. 
We say $B$ of $V$ is \emph{compatible} with the decomposition of $V$ into irreducibles if  $\{v_1^{(i)},\dots v_{d_i}^{(i)}\}$ is a basis of $V_i$ for each $i \in [m]$. 
With a compatible basis, the matrices $\rho(g)$ will be block diagonal. 

Properties of representations are sensitive to the underlying field. A representation $\rho:G \to GL(V)$ for a complex vector space $V$ is called \emph{real} if $V  = V_0 \otimes \bbC$ for some real vector space $V_0$. Let $V_0$ be the underlying real vector space of a real representation and $V = V_0 \otimes \bbC$ its complexification. It is possible that $V_0$ has no nontrivial invariant $\bbR$-subspaces but that $V$ contains two irreducible (complex) subrepresentations. For more details, see \cite[Lecture 3.5]{fulton_representation_2004}. 
As an example, in general, if $G$ is an abelian group, then every irreducible representation over $\mathbb{F} = \bbC$ is one dimensional. However, this is not necessarily true over $\bbR$. Explicitly, the real representation of the cyclic group of order 4 with generator $\rho(g) = \begin{bmatrix} 0 & -1\\ 1 & 0\end{bmatrix}$ on the real vector space $\bbR^2$ is real irreducible, but the decomposition of $\bbC^2 = \bbR^2 \otimes \bbC$ into (complex) irreducibles is given by $\bbC^2 \cong \spann_\bbC \{(1,\sqrt{-1})\} \oplus \spann_\bbC \{(1, -\sqrt{-1})\}$. 

Let $V$ be a complex representation of a group $G$ and $V = V_1 \oplus V_2 \oplus \cdots \oplus V_m$ its decomposition into irreducibles. We say that $V$ is \emph{totally real} if each $V_i$ is real.

If $\rho:G \to GL_n(\bbF)$ is a representation and $T:\bbF^{n} \to \bbF^n$ is a linear transformation, we say that $T$ is \emph{$\rho$-equivariant} if $T(\rho(g)v) = \rho(g)T(v)$ for all $g \in G$ and $v \in \bbF^n$. Schur's Lemma characterizes the $\rho$-equivariant maps between irreducible representations of a group.  

\begin{lemma}[Schur's Lemma (see e.g.,{\cite{fulton_representation_2004}})]\label{lem:Schur}
Let $G$ be a finite group and let $\rho_V:G \to GL(V)$ and $\rho_W:G \to GL(W)$ be irreducible representations of $G$ over an algebraically-closed field $\Fbb$. Suppose that $T:V\to W$ is a linear map that satisfies $T(\rho_V(g)v) = \rho_W(g)T(v)$ for all $v \in V$ and $g \in G$.  
If $V = W$ and $\rho_V = \rho_W$, then $T = cI$ for some $c\in \bbF$ where $I$ is the identity transformation on $V$.
If $V \not\cong W$, then $T = 0$. 

\end{lemma}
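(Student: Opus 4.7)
The plan is to follow the standard two-part argument based on analyzing the kernel and image of $T$ as subrepresentations. First I would verify that $\ker(T) \subseteq V$ is a $\rho_V$-invariant subspace and $\im(T) \subseteq W$ is a $\rho_W$-invariant subspace. For the kernel, if $v \in \ker(T)$ and $g \in G$, the equivariance hypothesis gives $T(\rho_V(g)v) = \rho_W(g) T(v) = 0$, so $\rho_V(g)v \in \ker(T)$. A symmetric computation handles the image. Irreducibility of $V$ then forces $\ker(T) \in \{0, V\}$ and irreducibility of $W$ forces $\im(T) \in \{0, W\}$.

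Combining the possibilities, either $T = 0$ or $T$ is simultaneously injective and surjective, hence a $G$-equivariant isomorphism $V \to W$. In the case $V \not\cong W$ as representations, this second possibility is excluded, so $T = 0$, which handles the second statement.

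For the first statement where $V = W$ and $\rho_V = \rho_W$, I would invoke algebraic closure of $\Fbb$ in a crucial way: the endomorphism $T$ of the finite-dimensional space $V$ admits at least one eigenvalue $c \in \Fbb$. Then I would consider the map $T - cI$, which is again $\rho_V$-equivariant since $cI$ commutes with every $\rho_V(g)$. By construction this map has nontrivial kernel, so by the dichotomy from the previous paragraph applied to $T - cI$, we must have $T - cI = 0$, giving $T = cI$.

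The main obstacle is conceptual rather than computational: pinpointing exactly where algebraic closure is used. The invariance of kernel and image works over any field, but the existence of an eigenvalue for $T$ does not, and this is why the statement fails over $\bbR$ in general. This is consistent with the paper's earlier remark that a real representation can become reducible over $\bbC$, and it signals that when specializing the $\starM$-framework to $\Fbb = \bbR$, one must work with the complexification (or an enlarged commutant allowing quaternionic structures) to get a clean block-diagonalization via Schur.
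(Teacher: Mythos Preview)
Your proof is correct and is precisely the standard argument for Schur's Lemma. Note, however, that the paper does not actually supply its own proof of this lemma: it is stated with a citation to Fulton--Harris and used as a black box, so there is no in-paper argument to compare against. Your write-up matches the textbook proof the authors are implicitly invoking, and your closing remark about the role of algebraic closure (and the need to complexify when $\Fbb = \bbR$) is apt and aligns with the paper's discussion of real versus complex irreducibles in Section~\ref{sec:RepTheoryBackground}.
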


Schur's Lemma gives places strong structure on equivariant linear transformations. Since we work primarily with matrix representatives of linear transformations, we need an appropriate choice of ordered basis to make this structure apparent.

\begin{defn}[Symmetry-Adapted Basis]\label{def:SymmetryAdaptedBasis}
Let $\rho:G \to \mathrm{GL}(V)$ be a linear representation of a finite group and $V = V_1 \oplus V_2\oplus \cdots \oplus V_m$ the decomposition of $V$ into irreducibles with $\dim V_i = d_i$ for $i\in [m]$.  An ordered basis of $V$ given by
\begin{equation}\label{eq:SymmetryAdaptedBasis}
B = \{ v_{1}^{(1)},v_{2}^{(1)},\ldots,v_{d_1}^{(1)}\}\cup \{v_1^{(2)}, v_{2}^{(2)},\ldots v_{d_2}^{(2)}\} \cup \cdots \cup \{v_1^{(m)}, v_{2}^{(m)}, \ldots,v_{d_m}^{(m)}\}
\end{equation}
is called a \emph{symmetry adapted basis} of $V$ if it satisfies the following two properties:
 
 \begin{enumerate}
 \item The set $\{v_1^{(i)},\dots v_{d_i}^{(i)}\}$ is a basis of $V_i$ for each $i \in [m]$.
 \item If $T:V \to V$ is a $\rho$-equivariant linear transformation, then its matrix representative with respect to $B$ has blocks $T_{i,j}:V_i \to V_j$ with $T_{i,j} = c_{i,j}I$ or $T_{i,j} = 0$.
 \end{enumerate}

\end{defn}

An important consequence of~\Cref{lem:Schur} and Definition~\ref{def:SymmetryAdaptedBasis} is that $\rho$-equivariant maps $T:V \to V$ can be represented using block diagonal structure. 
Indeed, by permuting a symmetry adapted basis, one obtains a block diagonal matrix. 

For example, if $V \simeq V_1 \oplus V_2$, where $V_1 \simeq V_2$ as representations, and $\dim(V_1) = 2$, an equivariant map $T:V \to V$ has the matrix representative 

\begin{equation}\begin{bmatrix}c_{11} I_2 & c_{12} I_2\\ c_{21}I_2 & c_{22}I_2 \end{bmatrix}\end{equation}
with respect to a symmetry adapted basis $B = \{v^{(1)}_1, v^{(1)}_2, v^{(2)}_1,v^{(2)}_2\}$ of $V$. Permuting this $B$ to the ordered basis $B' = \{v^{(1)}_1,v^{(2)}_1,v^{(1)}_2,v^{(2)}_2\}$ yields the matrix 

\begin{equation}\begin{bmatrix} c_{11} & c_{12} & &  \\ c_{21} & c_{22} & & \\ & & c_{11} & c_{12}\\ & & c_{21} & c_{22}\end{bmatrix} 
= \begin{bmatrix} 1 &  &  & \\  &  & 1 & \\  & 1 &  & \\  &  & & 1\end{bmatrix}\begin{bmatrix}c_{11} &  & c_{12} & \\ & c_{11} & & c_{12}\\ c_{21}&  & c_{22} & \\ & c_{21} & & c_{22} \end{bmatrix}\begin{bmatrix} 1 &  &  & \\  &  & 1 & \\  & 1 &  & \\  &  & & 1\end{bmatrix}.\end{equation}

\subsection{Equivariance of the $\starM$-Product}\label{sec:MProdEquivariance} 
Using the group representation background, we introduce group equivariance properties of the $\starM$-product.  
If $\rho:G \to \GL_{n_3}(\bbF)$ is a representation, then there is a left action of $G$ on $\bbF^{n_1\times n_2\times n_3}$ given by $g \cdot \cA = \cA \times_3 \rho(g)$ for any $\cA \in \bbF^{n_1\times n_2\times n_3}$ and $g \in G$. 
We first reduce the problem to determining the equivariance of the tubal multiplication map $T_{\ba}: \bbF^{1\times 1\times n_3} \to \bbF^{1\times 1 \times n_3}$ where, for a fixed $M\in \GL_{n_3}(\Fbb)$ and $\bfa\in \Fbb_M$, we have
	\begin{align}\label{eq:tubal_multiplication_map}
		T_{\bfa}(\bfx) = \bfa \starM \bfx
	\end{align}
for all $\bfx\in \Fbb^{1\times 1\times n_3}$. 
If tubal multiplication is $\rho$-equivariant, it follows from Definition~\ref{def:mprod} that the $\starM$-product of tensors is $\rho$-equivariant as well. 

To make the ideas of equivariance concrete, we take the $t$-product as our motivating example and show the corresponding multiplication map is equivariant under a representation of the cyclic group of order $n_3$.  

\begin{example}[Cyclic equivariance of the $t$-product]\label{ex:t_prod_equivariant}
Let $C_{n_3} = \langle g \mid g^{n_3} = e \rangle$ be the cyclic group of order $n_3$ with generator $g$ and identity element $e$. 
Let $\rho$ be a representation of $C_{n_3}$ where $\rho(g) = Z$, the circulant downshift matrix in~\eqref{eq:circulant_downshift}. 
We will show that the $t$-product is $\rho$-equivariant for this representation of $C_{n_3}$. 

Recall, the $t$-product is equivalent to the $\starM$-product using $M = F$, where $F$ is the discrete Fourier transform matrix in~\Cref{ex:t_prod}. Note also that the matrix $F$ gives a change of basis on $\bbC^{n_3}$ to a symmetry adapted basis. 

Following the tubal matrix representative of the $t$-product in~\eqref{eq:tprod_tubal_representation}, 
for any $\bfx\in \Cbb_F$, we have
    \begin{align}
        T_{\bfa}(g \cdot \bfx) = \bfa \star_F (g\cdot \bfx)
         &\equiv \mycirc(\bfa) \left(\rho(g) \myvec(\bfx)\right)
         =\left(\sum_{i=1}^{n_3}a_i \rho(g)^{i-1} \right) \left(\rho(g) \myvec(\bfx)\right).
    \end{align}
Because $\mycirc(\bfa)$ can be expressed as a linear combination of powers of $\rho(g)$, we can pass $\rho(g)$ through the summation; that is, 
    \begin{align}
        \left(\sum_{i=1}^{n_3}a_i \rho(g)^{i-1} \right) \left(\rho(g) \myvec(\bfx)\right) 
        = \rho(g)\left[\left(\sum_{i=1}^{n_3}a_i \rho(g)^{i-1}\right) \myvec(\bfx)\right]
        \equiv g \cdot T_{\bfa}(\bfx).
    \end{align}
\end{example}

\Cref{ex:t_prod_equivariant} shows that the map $T_\ba$ is $\rho$-equivariant for any $\ba\in \Cbb_F$. 
\Cref{thm:Equivariance} provides a characterization of the relationship between a group $G$, a representation $\rho$, and the matrix $M$ for this property to hold.

\begin{restatable}{theorem}{Equivariance}\label{thm:Equivariance}
Fix a finite group $G$, a representation $\rho:G \to GL_{n_3}(\bbF)$, and $M \in GL_{n_3}(\bbF)$.  
Then, the multiplication map $T_{\bfa}: \Fbb_M \to \Fbb_M$ is $\rho$-equivariant for all $\ba \in \bbF_M$ if and only if  $M\rho(g)M^{-1}$ is diagonal for each $g \in G$. 
\end{restatable}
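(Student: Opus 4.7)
The plan is to translate $\rho$-equivariance of $T_{\bfa}$ into a matrix identity involving $M\rho(g)M^{-1}$, and then invoke the standard fact that the centralizer of the algebra of diagonal matrices in $\mathrm{M}_{n_3}(\bbF)$ is itself the diagonal matrices.

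First, I would use the matrix representative in~\eqref{eq:matrix_representative}: $T_{\bfa}$ corresponds to left multiplication by $M^{-1}\diag(M\vec(\bfa))M$ on $\bbF^{n_3}$. Under the action $g\cdot\bfx = \bfx\times_3\rho(g)$, the vectorized action of $g \in G$ on a tube is left multiplication by $\rho(g)$. Hence $T_{\bfa}$ is $\rho$-equivariant if and only if the matrix identity
\[
M^{-1}\diag(M\vec(\bfa))M\,\rho(g) \;=\; \rho(g)\,M^{-1}\diag(M\vec(\bfa))M
\]
holds for every $g \in G$ and every $\bfa \in \bbF_M$.

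Next, I would conjugate both sides by $M$, which transforms the condition into
\[
\diag(M\vec(\bfa))\cdot\bigl(M\rho(g)M^{-1}\bigr) \;=\; \bigl(M\rho(g)M^{-1}\bigr)\cdot\diag(M\vec(\bfa)).
\]
Because $M$ is invertible, the assignment $\bfa \mapsto M\vec(\bfa)$ is a bijection onto $\bbF^{n_3}$, so as $\bfa$ ranges over $\bbF_M$ the matrix $\diag(M\vec(\bfa))$ ranges over all of $\Dgroup_{n_3}(\bbF)$. Requiring equivariance for every $\bfa$ is therefore equivalent to requiring that $M\rho(g)M^{-1}$ commute with every diagonal matrix.

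Finally, I would close the argument by invoking the fact that the commutant of $\Dgroup_{n_3}(\bbF)$ inside $\mathrm{M}_{n_3}(\bbF)$ is precisely $\Dgroup_{n_3}(\bbF)$. The forward implication is immediate since diagonal matrices commute pairwise; for the reverse, if $P$ commutes with every diagonal matrix then, testing with $D = E_{ii}$ (the matrix with a single $1$ in position $(i,i)$) and comparing the $(i,j)$ entries of $DP$ and $PD$ for $i\neq j$ forces $P_{ij} = 0$. This yields the claimed equivalence. There is no serious obstacle here: the proof is a change-of-basis followed by a centralizer computation. The only point worth emphasizing is the use of invertibility of $M$ in sweeping out all diagonal matrices as $\bfa$ varies, which is what lets one upgrade a family of commutation relations into a structural statement about $M\rho(g)M^{-1}$.
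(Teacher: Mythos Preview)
Your proposal is correct and follows essentially the same approach as the paper: both translate equivariance into the commutation relation $\diag(M\vec(\bfa))\,M\rho(g)M^{-1} = M\rho(g)M^{-1}\,\diag(M\vec(\bfa))$, use invertibility of $M$ to sweep out all of $\Dgroup_{n_3}(\bbF)$, and then test against the elementary diagonal matrices $E_{ii}$ to force $M\rho(g)M^{-1}$ diagonal. Your packaging via the centralizer of $\Dgroup_{n_3}(\bbF)$ is slightly more concise, but the underlying argument is identical.
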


\begin{proof}

The bidirectional proof proceeds as follows. 

\begin{itemize}
\item[$(\Longrightarrow):$]Suppose that $M\rho(g)M^{-1}$ is diagonal for each $g \in G$. Let $g \in G$ and $\ba,\bfb \in \Fbb_M$. 
Then, because diagonal matrices commute, we have
	\begin{subequations}
	\begin{align}
\ba \starM (g \cdot \bfb) &\equiv \left(M^{-1}\diag(M\vec(\ba))M\rho(g)\vec(\bb)\right)\\
&\equiv \left(M^{-1}\diag(M\vec(\ba))M\rho(g)M^{-1} M\vec(\bb)\right)\\
&=\left(M^{-1}M\rho(g)M^{-1}\diag(M\vec(\ba))M\vec(\bb)\right)\\
&\equiv g \cdot (\ba \starM \bb). 
	\end{align}
	\end{subequations}
	
\item[$(\Longleftarrow)$:] Conversely suppose that $T_{\bfa}$ is $\rho$-equivariant for all $\ba \in \bbF_M$. 
Then, for any $g \in G$ and $\bb \in \bbF_M$, we have $T_{\bfa}(g \cdot \bfb) = g\cdot T_{\bfa}(\bfb)$, or equivalently
	\begin{align}
	M^{-1}\diag(M\vec(\ba))M\rho(g)\vec(\bb) = \rho(g)M^{-1}\diag(M\vec(\ba))M\vec(\bb).
	\end{align}
Because this holds for any $\bfb\in \bbF_M$, we have
	\begin{align}
	\diag(M\vec(\ba))M\rho(g)M^{-1} = M\rho(g)M^{-1}\diag(M\vec(\ba)).
	\end{align}

Since $M$ is invertible, any diagonal $n_3\times n_3$ matrix can be written in the form $\diag(M\vec(\ba))$.  
In particular, for $\ell \in [n_3]$, let $E_\ell$ be the diagonal matrix with entry $e_{\ell\ell} = 1$ and all other entries zero. 
It follows that 
	\begin{align}
	E_\ell M\rho(g)M^{-1} = M\rho(g)M^{-1}E_\ell
	\end{align}
for all $\ell \in [n_3]$. 
To satisfy these $n_3$ equations,  $M\rho(g)M^{-1}$ must be diagonal.
\end{itemize}
\end{proof}

For many representations and groups of interest, the matrices $\rho(g)$ for $g \in G$ will not be simultaneously diagonalizable. Indeed, simultaneously diagonalizable matrices commute, but the group operation in many groups is noncommutative (e.g., the symmetric group $S_n$ for $n\geq 3$). 
Thus, the multiplication map $T_\ba$ will only be $\rho$-equivariant for some subset of tubes. 

\begin{corollary}\label{cor:Nonabelian_Nonequivariant}
Let $G$ be a nonabelian group and $\rho$ a faithful representation of $G$. For any $M\in \GL_{n_3}(\Fbb)$, there exist tubes $\ba \in \bbF_M$ for which the multiplication map $T_\ba$ is \underline{not} $\rho$-equivariant. 
\end{corollary}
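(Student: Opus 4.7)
The plan is to argue by contradiction, leveraging \Cref{thm:Equivariance} to convert the equivariance hypothesis into a statement about simultaneous diagonalizability of the representation, and then deriving commutativity of $G$ from this.

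Specifically, I would suppose to the contrary that for every $\ba \in \bbF_M$ the multiplication map $T_{\bfa}$ is $\rho$-equivariant. By the characterization in \Cref{thm:Equivariance}, this forces $M\rho(g)M^{-1}$ to be diagonal for every $g \in G$. In particular, the matrices $\{M\rho(g)M^{-1} : g \in G\}$ all lie in the (commutative) algebra of $n_3 \times n_3$ diagonal matrices, and so any two of them commute.

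Next, I would translate this back to the group. For any $g, h \in G$,
\begin{align}
    M\rho(gh)M^{-1} = \bigl(M\rho(g)M^{-1}\bigr)\bigl(M\rho(h)M^{-1}\bigr) = \bigl(M\rho(h)M^{-1}\bigr)\bigl(M\rho(g)M^{-1}\bigr) = M\rho(hg)M^{-1},
\end{align}
so $\rho(gh) = \rho(hg)$. Because $\rho$ is faithful, $\rho$ is injective, and hence $gh = hg$ for all $g, h \in G$. This contradicts the hypothesis that $G$ is nonabelian, so the supposition fails and some $\ba \in \bbF_M$ must yield a non-equivariant $T_{\bfa}$.

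There is no serious obstacle: the only ingredients are \Cref{thm:Equivariance}, the trivial fact that diagonal matrices commute, and the definition of a faithful representation. The argument works uniformly over any $M \in \GL_{n_3}(\Fbb)$ and over either field $\Fbb \in \{\Rbb, \Cbb\}$.
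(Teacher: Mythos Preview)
Your proposal is correct and is essentially the same argument as the paper's: both hinge on \Cref{thm:Equivariance}, the commutativity of diagonal matrices, and the injectivity of a faithful $\rho$. The only cosmetic difference is that the paper argues directly (pick noncommuting $g,h$, so $\rho(g),\rho(h)$ don't commute, hence are not simultaneously diagonalizable) while you run the contrapositive by contradiction.
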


\begin{proof}
If $\rho$ is a faithful representation of $G$ and $g,h \in G$ have $gh \not = hg$, then $\rho(g)\rho(h)\not = \rho(h)\rho(g)$. Because the matrices $\rho(g)$ and $\rho(h)$ do not commute, they are not simultaneously diagonalizable  
and the result follows by Theorem \ref{thm:Equivariance}. 
\end{proof}

Theorem \ref{thm:Equivariance} characterizes the representations $\rho$ and transformation matrices $M$ where \emph{all} multiplication maps $T_\ba$ are $\rho$-equivariant. In light of Corollary \ref{cor:Nonabelian_Nonequivariant}, it makes sense to shift perspectives and consider a \emph{fixed} representation $\rho$ and transformation matrix $M$ and characterize the subset $W_\rho \subseteq \bbF_M$ which satisfies the property that $T_\ba$ is $\rho$-equivariant for $\ba \in W_\rho$. We provide this characterization in Theorem \ref{thm:EquivariantSubspace} below. 

Recall from Section \ref{sec:RepTheoryBackground} that if $G$ is a finite group and $\rho$ is a representation, then the decomposition of $\bbC^{n_3}$ into irreducible representations of $\rho$ takes the form 

\begin{equation}\bbC^{n_3} \simeq V_1 \oplus V_2 \oplus \dots \oplus V_{m},\end{equation}
where $\dim V_j = d_j$. This gives rise to a\emph{symmetry-adapted basis} $B$ of $\bbC^{n_3}$.   

\begin{tcolorbox}[colback=EmoryBlue!5, colframe=EmoryBlue]
\begin{restatable}{theorem}{EquivariantSubspace}\label{thm:EquivariantSubspace}
Fix a finite group $G$ and a representation $\rho:G \to \GL_{n_3}(\bbC)$. Suppose that the decomposition of $\bbC^{n_3}$ into irreducibles is given by 
\begin{equation}\bbC^{n_3} \simeq V_1 \oplus V_2 \oplus \dots \oplus V_m,\end{equation}
where $\dim_{\bbC}(V_j) = d_j$ for $j\in [m]$. Let $M\in \GL_{n_3}(\bbC)$ represent a change of basis to a symmetry-adapted basis. Then, there is a vector subspace $W_\rho \subseteq \bbC_{M}$ for which $T_\ba$ is $\rho$-equivariant if and only if $\ba \in W_\rho$. $W_\rho$ is given explicitly by 
\begin{equation}\label{eq:EquivariantSubspace}W_\rho = \left\{ \bx\in \Cbb^{1\times 1\times n_3} \; \middle \vert \; \diag(M \vec(\bx)) = \begin{bmatrix} c_1I_{d_1} & & & \\ & c_2 I_{d_2} & & \\ & & \ddots & \\ & & & c_m I_{d_m} \end{bmatrix}, \enskip c_1,c_2,\ldots, c_m \in \bbC \right\}. \end{equation}

\end{restatable}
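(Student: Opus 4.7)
The plan is to reduce the equivariance condition on $T_{\bfa}$ to a commutation condition on $\diag(M\vec(\bfa))$, then apply Schur's Lemma blockwise along the irreducible decomposition made visible by the symmetry-adapted basis.

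First, I would extract from the computation inside the proof of \Cref{thm:Equivariance} the following single-tube criterion: for a \emph{fixed} $\bfa \in \Cbb_M$, the map $T_{\bfa}$ is $\rho$-equivariant if and only if
\begin{equation}
\diag(M\vec(\bfa))\, M\rho(g)M^{-1} \;=\; M\rho(g)M^{-1}\, \diag(M\vec(\bfa)) \qquad \text{for all } g \in G.
\end{equation}
The reverse direction of \Cref{thm:Equivariance} already uses this implication globally; restricting attention to a fixed $\bfa$ amounts to dropping the quantifier over $\bfa$. This reframes the theorem as a characterization of which diagonal matrices $D = \diag(M\vec(\bfa))$ commute with the conjugated representation $\rho'(g) := M\rho(g)M^{-1}$.

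Next, I would exploit the symmetry-adapted hypothesis on $M$. By definition, in the chosen basis each $\rho'(g)$ is block diagonal with diagonal blocks $\rho_j(g) \in \GL_{d_j}(\Cbb)$ giving the action of $G$ on the irreducible constituent $V_j$. Partition $D$ conformally as an $m \times m$ block matrix $(D_{ij})$. Because $D$ is diagonal, the off-diagonal blocks vanish and each block $D_{jj}$ is itself a $d_j \times d_j$ diagonal matrix. The commutation relation then decouples into the independent conditions $D_{jj}\,\rho_j(g) = \rho_j(g)\,D_{jj}$ for every $j \in [m]$ and every $g \in G$.

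Since $\Cbb$ is algebraically closed and each $\rho_j$ is irreducible, \Cref{lem:Schur} forces $D_{jj} = c_j I_{d_j}$ for some $c_j \in \Cbb$. Conversely, any $D$ of this block-scalar form trivially commutes with every block diagonal $\rho'(g)$, since scalar multiples of identity blocks commute with any matrix of matching size. This proves the equivalence and yields precisely the description \eqref{eq:EquivariantSubspace}. The fact that $W_\rho$ is a $\Cbb$-linear subspace of $\Cbb^{1 \times 1 \times n_3}$ is immediate, because its defining conditions are linear in the free parameters $c_1,\ldots,c_m$ (equivalently, because the set of matrices commuting with a fixed collection is a subspace, and intersecting with the subspace of diagonal matrices preserves linearity).

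The main conceptual subtlety, rather than a technical obstacle, is the interaction between the diagonal constraint on $D$ and the possibility of isomorphic irreducible constituents $V_j \cong V_{j'}$. For the \emph{full} commutant of $\rho'$, Schur's Lemma would permit nonzero intertwiners between isomorphic components, yielding a larger algebra than $\bigoplus_j \Cbb\, I_{d_j}$. Here, however, the diagonal hypothesis on $D$ kills those cross-blocks automatically, so the coefficients $c_j$ and $c_{j'}$ remain independent. Being explicit about this point is what guarantees that the characterization \eqref{eq:EquivariantSubspace} is tight rather than an over- or under-count of $\dim_\Cbb W_\rho = m$.
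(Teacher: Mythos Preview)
Your proof is correct and follows essentially the same route as the paper: both reduce $\rho$-equivariance of $T_{\bfa}$ to the commutation of $\diag(M\vec(\bfa))$ with the conjugated representation $M\rho(g)M^{-1}$, then invoke Schur's Lemma on the irreducible blocks made visible by the symmetry-adapted basis. Your final paragraph on isomorphic irreducible constituents is a welcome clarification that the paper leaves implicit---the diagonality of $D$ is what forces the cross-block intertwiners to vanish and makes the restriction-to-each-block argument legitimate.
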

\end{tcolorbox}

\begin{proof}

The map $T_\ba$ is $\rho$-equivariant if and only if 
\begin{equation}
M^{-1}\diag(M\vec(\ba))M \rho(g) = \rho(g)M^{-1}\diag(M\vec(\ba))M,\end{equation}
which is equivalent to $\diag(M\vec(\ba))$ and $M\rho(g)M^{-1}$ commuting. As a result, if $\hat{\rho}:G \to \GL_{n_3}(\bbC)$ is the representation with $\hat{\rho}(g) = M\rho(g)M^{-1}$, then the linear transformation given by multiplication by $\diag(M\vec(\ba))$ is $\hat{\rho}$-equivariant. Note that since $M$ is a change of basis to a symmetry-adapted basis, the decomposition of $\bbC^{n_3}$ into $\hat{\rho}$-irreducibles is given by subspaces spanned by standard basis vectors:
\begin{equation}
\bbC^{n_3} \simeq \spann\{e_1,\ldots, e_{d_1}\} \oplus \spann \{e_{d_1 + 1}, \ldots e_{d_1 + d_2}\} \oplus \dots \oplus \spann \left\{e_i \; \middle\vert \; \left(\sum_{j = 1}^{m-1}d_j\right) + 1 \leq i \leq n_3\right\}.
\end{equation}

By Schur's Lemma (Lemma \ref{lem:Schur}), multiplication by $\diag(M\vec(\ba))$ map is $\hat{\rho}$-equivariant if and only if the restrictions to each $\hat{\rho}$-irreducible are given by multiplication by a constant. This happens if and only if $\diag(M\vec(\ba))$ has the desired block structure in \eqref{eq:EquivariantSubspace}.  
\end{proof}

Theorem \ref{thm:EquivariantSubspace} gives explicit conditions linear conditions on the space of tubes for $T_\ba$ to be equivariant in terms of the rows of the matrix $M$. To utilize these conditions, we form an auxillary matrix $V \in \bbC^{n_3\times m}$, with block structure

\begin{equation}V = 
\begingroup 
\setlength\arraycolsep{0pt}
\begin{bmatrix} \mathbbm{1}_{d_1} \\  & \mathbbm{1}_{d_2} \\ &  & \rotatebox{-60}{$\cdots$} & \\ & &  & \mathbbm{1}_{d_m}\end{bmatrix},
\endgroup
\end{equation}
where $\mathbbm{1}_{d_i} \in \Cbb^{d_i\times 1}$ is the constant vector of all ones.

Then, for any $a\in \Cbb^{n_3}$, $\tube(a) \in W_\rho$ if and only if we can find coefficients $c\in \Cbb^m$ such that $Ma = Vc$. 

We demonstrate the statement of Theorem \ref{thm:EquivariantSubspace} on the permutation representation of the symmetric group $S_3$. 

\begin{example}[The Symmetric Group $S_3$]\label{ex:symmetric_group}

Consider the symmetric group 
    \begin{align}
        S_3 = \langle \sigma,\tau \mid \sigma^2 = \tau^3 = e, \sigma \tau = \tau^2 \sigma\rangle.
    \end{align}
Let $\rho:S_3 \to GL_3(\bbC)$ be the permutation representation so that  

\begin{equation}\rho(\sigma) = \begin{bmatrix} 0 & 1 & 0\\ 1 & 0 & 0 \\ 0 & 0 & 1 \end{bmatrix} \quad \text{and} \quad \rho(\tau) = \begin{bmatrix} 0 & 0 & 1\\ 1 & 0 & 0\\ 0 & 1 & 0 \end{bmatrix}.\end{equation}

The decomposition of $\bbC^3$ into irreducibles is then $\bbC^3 \simeq V_1 \oplus V_2$ for $V_1 =\spann_{\bbC}\{(1,1,1)\} $ and $V_2 =\spann_{\bbC}\{(1,-1,0), (1, 0,-1)\} $. % Let 
Let the bases of the irreducibles form the columns of $M$; that is, 

\begin{align}
    M = \begin{bmatrix} 1 & 1 & 1\\1 & -1 & 0\\ 1 & 0 & -1\end{bmatrix}
    \quad \text{and} \quad 
    M^{-1} = \frac{1}{3}\begin{bmatrix}
    1 & 1 & 1\\
    1 & -2 & 1\\
    1 & 1 & -2
    \end{bmatrix}.
\end{align}

To determine the $\rho$-equivariant transformations, $T_{\ba}$, we solve for $\bfa$ using the relation $Ma = Vc$. 
In particular, we compute the kernel of the augmented matrix $\left[\begin{array}{c|c} M & V \end{array}\right]$; that is, 
    \begin{align}
    \ker\left(
        \left[\begin{array}{ccc|cc}
        1 & 1 & 1 & 1 & 0\\
        1 & -1 & 0 & 0 & 1\\ 
        1 & 0 & -1 & 0 & 1 
        \end{array}\right]
        \right)
         = \left\{ a\in \bbC^3, c_1,c_2\in \bbC \; \left\vert\; \begin{array}{rcl} a_1 + a_2 + a_3 &=& c_1\\  a_1 - a_2 &=& c_2 \\ a_1 - a_3 &=& c_2 \end{array}\right.\right\}.
    \end{align}
Theorem \ref{thm:EquivariantSubspace} then implies that $T_\ba$ is $\rho$-equivariant if and only if $a_1-a_2 = a_1 - a_3 = c_2$ for some constant $c_2\in \bbC$.
Therefore, the set of tubes which yield $\rho$-equivariant transformations are
    \begin{align*}
        W_{\rho} = \{\ba \in \bbC^{1\times 1 \times 3}\mid a_1-a_2 = a_1 - a_3\} = \spann_{\bbC}\{\tube((1,0,0)), \tube((0,1,1))\}
    \end{align*}
Let $\bfa_1 \equiv (1,0,0)$ and $\bfa_2 \equiv (0,1,1)$ be the elements of a basis tubes of $W_{\rho}$. 
Let $\bfb\in \Cbb^{1\times 1\times 3}$ be arbitrary and consider $T_{\bfa}(\bfb)$ for each basis vector; that is, 
\begin{subequations}
    \begin{alignat}{4}
        T_{\bfa_1}(\bfb) &= \bfa_1 \starM \bfb &&\equiv \underbrace{M^{-1} \diag(M \vec(\bfa_1)) M}_{I_3} \vec(\bfb) &&= \begin{bmatrix}
            b_1 \\ b_2 \\ b_3
        \end{bmatrix}, \quad \text{and} \label{eq:Ta1}\\
        T_{\bfa_2}(\bfb) &= \bfa_2 \starM \bfb &&\equiv \underbrace{M^{-1} \diag(M \vec(\bfa_2)) M}_{Z_3} \vec(\bfb) &&= \begin{bmatrix}
            b_2 + b_3 \\ b_1 + b_3 \\ b_1 + b_2
        \end{bmatrix} \label{eq:Ta2}.
    \end{alignat}
\end{subequations}
Here, $Z_3 = \mathbbm{1}_3 (\mathbbm{1}_3)^\top - I_3$ where $\mathbbm{1}_3$ is the $3\times 1$ constant vector of all ones. 
Both transformations are $\rho$-equivariant; that is, for any $g\in S_3$, $T_{\bfa_i}(g \cdot \bfb) = g \cdot T_{\bfa_i}(\bfb)$ for $i=1,2$. 
This result is immediate for $T_{\bfa_1}$, which is equivalent to the identity transformation in~\eqref{eq:Ta1}. 
For $T_{\bfa_2}$, because $Z_3$ commutes with permutation matrices, we have 
    \begin{align}
        T_{\bfa_2}(g \cdot \bfb)
        \equiv Z_3 \rho(g) \vec(\bfb)
        = \rho(g) Z_3 \vec(\bfb) \equiv g \cdot T_{\bfa_2}(\bfb). 
    \end{align}
Thus, $T_{\bfa}$ is $\rho$-equivariant for any $\bfa\in W_{\rho}$. 
We illustrate the geometry of the $\rho$-equivariance for $S_3$ in~\Cref{fig:symmetric_intuition}. 

\end{example}

\begin{figure}
\centering
\includegraphics[width=0.6\linewidth]{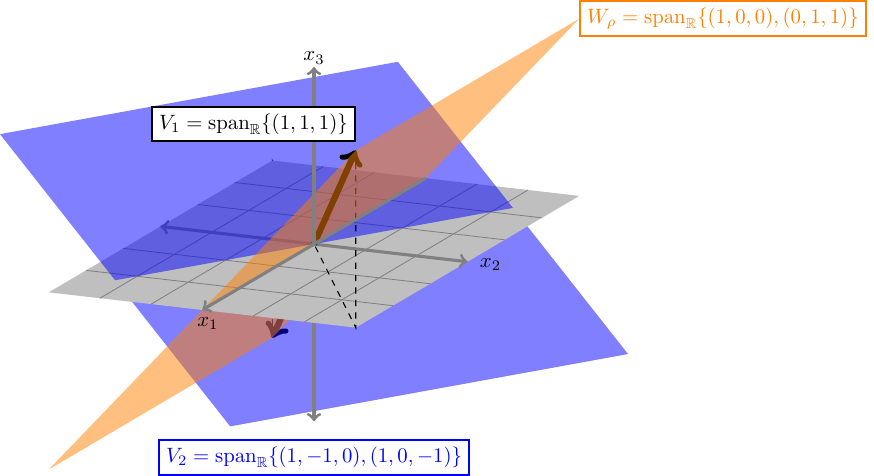}

\caption{Geometric interpretation of $\rho$-equivariance of symmetric group $S_3$ under the $\starM$-product from~\Cref{ex:symmetric_group}. 
The thick {\bf black} line through the origin depicts $V_1 = \spann_{\Rbb}\{(1,1,1)\}$ and the {\bf \color{blue}blue} plane depicts $V_2 = \spann_{\Rbb}\{(1,-1,0),(1,0,-1)\}$ as the irreducibles of $\Rbb^3$. 
The {\bf \color{orange} orange} plane shows the $\rho$-equivariant set, $\spann_{\Rbb}\{(1,0,0),(0,1,1)\}$.} 
\label{fig:symmetric_intuition}
\end{figure}

\subsection{Connection to Invariant SDPs} We now use the representation theoretic interpretation of the $\starM$-product to study invariant semidefinite programs. The equality constraints in \eqref{eq:PMSDP} allow one to enforce that multiplication by the tensor $\cX$ satisfies group equivariance properties. Indeed, if $\rho:G \to GL_{n_3}(\bbR)$ and $M$ are as in Theorem \ref{thm:EquivariantSubspace}, then the condition that multiplication is $\rho$-equivariant is a linear condition on the space of tubes and can therefore be written using equalities of the form $\langle \cA^{(i)}, \cX \rangle = 0$ for some symmetric tensors $\cA^{(i)}$. In particular, group equivariance can be encoded in the constraints of \eqref{eq:PMSDP}, and such cases yield invariant semidefinite programs.

\subsubsection{Invariant Matrix SDPs} 
Recall, a matrix SDP is the optimization of a linear functional over an affine slice of the cone of positive semidefinite matrices, which lies in the vector space 
$\Sbb^n = \{Y\in \Rbb^{n\times n} \mid X^\top = X\}$ of all $n\times n$ real-valued symmetric matrices. 
A representation $\rho: G \to GL_{n}(\bbR)$ gives $\Sbb^n$ the structure of a representation via the action 

	\begin{align}\label{eq:action_symmetric}
		g \cdot X = \rho(g)^\top X \rho(g).
	\end{align}
A matrix SDP is called \emph{invariant} if the objective function remains constant and the feasible region is fixed under group actions. That is, if $\Omega \subseteq \bbS^n$ is the feasible region and $C$ is the cost matrix, 
then $\Omega = g\cdot \Omega := \{g \cdot X\; \vert \; X\in \Omega\}$ for each $g\in G$ and $\langle C,g\cdot X \rangle = \langle C, X \rangle$ for each $g\in G$ and $X\in \Omega$. 

A central result of \cite{gatermann_symmetry_2004} is that an SDP which is invariant to a totally real representation of a group $G$ can be block-diagonalized by changing basis. 
Specifically, if $\rho(g)$ is orthogonal for every $g\in G$, then a symmetric matrix $X$ is fixed under the action of $G$ if and only if $\rho(g) X = X \rho(g)$. 
In this setting, the linear transformation $T_X: \bbR^{n_3} \to \bbR^{n_3}$ with $T_X(v) = Xv$ is $\rho$-equivariant. 
By permuting a symmetry adapted basis as in Definition \ref{def:SymmetryAdaptedBasis}, one obtains a basis for which $X$ is  block diagonalized. 

\subsubsection{$M$-SDPs and Invariant Matrix SDPs}

The results of Section \ref{sec:SemidefiniteTensors} imply that $M$-SDP can be viewed as a block-diagonalization of large semidefinite programs. Section \ref{sec:MProdEquivariance} provides an interpretation of the $\starM$-product in terms of representation theory. We connect these two ideas using the framework of invariant SDPs to characterize the case where these points of view are comptaible. That is, the case where the block diagonalization arising from the $\starM$-product structure agrees with the structure of an invariant SDP. 

\begin{theorem}\label{thm:InvariantSDP}
Let $G$ be a finite group and $\rho:G\to GL_{n_3}(\bbR)$ a totally real representation with $\rho(g)$ orthogonal for each $g \in G$. Let $M\in\Ogroup_{n_3}(\Rbb)$ and $W_\rho$ be as in the statement of Theorem \ref{thm:EquivariantSubspace}. Set $U \subseteq W_{\rho}$ to be the vector space of symmetric tensors whose tubes are elements of $W_\rho$. Then, for any symmetric tensors $\cC,\cA^{(\ell)}$ of size $n\times n \times n_3$ and any scalars $b^{(\ell)}$ for $\ell \in [L]$, the $M$-SDP
\begin{align}
\max \enskip \langle \cC, \cX \rangle \enskip \text{s.t. } \langle \cA^{(\ell)},\cX \rangle = b^{(\ell)} \text{ for } \ell \in [L], \enskip \cX \in \mathrm{PSD}_M^n \cap U
\end{align}
is an invariant $M$-SDP with respect to the representation $\hat{\rho}:G \to GL_{nn_3}(\bbR)$ given by $\hat{\rho}(g) = I_n \otimes \rho(g)$, where $I_n$ is the $n\times n$ identity matrix and $\otimes$ is the matrix Kronecker product. 
\end{theorem}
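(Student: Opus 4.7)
The plan is to recast the $M$-SDP as an equivalent matrix SDP on $\Sbb^{nn_3}$ via the block representative $X = (I_n \otimes M^\top)\mymat_M(\cX)(I_n \otimes M)$ from Proposition \ref{prop:MPSD_BlockMat}, and then verify that every feasible tensor $\cX$ corresponds to a matrix $X$ that is \emph{fixed} by the conjugation action $X \mapsto \hat\rho(g)^\top X \hat\rho(g)$. A fixed-point property on the feasible region is strictly stronger than $G$-invariance of the set, and it also immediately delivers invariance of the linear objective $\langle \cC, \cX\rangle$ and of every affine constraint; establishing this therefore proves that the $M$-SDP is invariant in the sense of Gatermann--Parrilo.

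The main computation is to expand
\begin{align*}
\hat\rho(g)^\top X \hat\rho(g) = (I_n \otimes (M\rho(g))^\top)\,\mymat_M(\cX)\,(I_n \otimes M\rho(g))
\end{align*}
using $M^{-1} = M^\top$. Setting $\tilde\rho(g) := M\rho(g)M^\top$, the task reduces to showing $(I_n \otimes \tilde\rho(g)^\top)\mymat_M(\cX)(I_n \otimes \tilde\rho(g)) = \mymat_M(\cX)$. The matrix $\tilde\rho(g)$ is orthogonal as a product of orthogonals, and since $M$ is a change of basis to a symmetry-adapted basis (available in the real setting by the total-reality hypothesis), $\tilde\rho(g)$ is block diagonal with blocks corresponding to the irreducible components of $\rho$.

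The crucial step is to invoke the hypothesis $\cX \in U$: each tube $\cX_{i,j,:}$ lies in $W_\rho$, so by Theorem \ref{thm:EquivariantSubspace} each $n_3\times n_3$ diagonal block $D_{ij} := \diag(M\vec(\cX_{i,j,:}))$ of $\mymat_M(\cX)$ is itself block-scalar along the same symmetry-adapted decomposition, of the form $\bigoplus_\ell c_\ell^{(ij)} I_{d_\ell}$. Since $\tilde\rho(g)$ is block diagonal with matching block sizes, it commutes with each $D_{ij}$ by a Schur-type argument, and orthogonality then yields $\tilde\rho(g)^\top D_{ij}\tilde\rho(g) = D_{ij}$. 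Applying this to every $(i,j)$-block shows $\mymat_M(\cX)$ is fixed by the conjugation, hence so is $X$. The main obstacle is the Kronecker-product bookkeeping and tracking the dual role of $M$: orthogonality turns inverses into transposes (and makes $\tilde\rho(g)$ orthogonal), while the symmetry-adapted property is precisely what forces the block-scalar structure on $\diag(M\vec(\cdot))$ for tubes in $W_\rho$.
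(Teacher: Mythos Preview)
Your proposal is correct and takes essentially the same approach as the paper: both pass to the block matrix $X=(I_n\otimes M^\top)\mymat_M(\cX)(I_n\otimes M)$ and establish the fixed-point property $\hat\rho(g)^\top X\hat\rho(g)=X$ by showing each $n_3\times n_3$ block commutes with the group action, which is precisely the $\rho$-equivariance of tubes in $W_\rho$ (you phrase this as $\tilde\rho(g)=M\rho(g)M^\top$ commuting with $D_{ij}$, the paper as $\rho(g)$ commuting with $M^\top D_{ij}M$---the same statement up to conjugation by $M$). The only addition in the paper is an explicit verification that $\langle\cB,\cX\rangle=\tr(\hat B\,\hat X)$, which your fixed-point argument lets you bypass for the invariance conclusion but which is implicit in your phrase ``recast as an equivalent matrix SDP.''
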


\begin{proof}
Recall from \eqref{eq:tensor_mat_full} that for $\cX \in \bbR_M^{n \times n}$, the multiplication map $T_\cX: \bbR^{n\times 1 \times n_3} \to \bbR^{n \times 1 \times n_3}$ gives a linear transformation $\bbR^{nn_3} \to \bbR^{nn_3}$ with matrix representative
\begin{align}\hat{X} &= (I_n \otimes M^{\top}) \mymat_M(\cX)(I_n \otimes M).
\end{align}
Note that $\hat{X}$ is a $nn_3\times nn_3$ block matrix with blocks $M^\top D_{i,j} M$, where $D_{i,j} = \diag(M\vec(\bx_{i,j})) \in \Dgroup_{n_3}(\bbR)$.
Now, if $\cX \in U$, then $\bx_{i,j} \in W_\rho$ for each $i,j$ and therefore $M^{-1}D_{i,j}M$ commutes with $\rho(g)$ for any $g \in G$. Since $\rho(g)^\top = (\rho(g))^{-1} = \rho(g^{-1})$, we have
\begin{equation}\rho(g)^{\top}M^{\top}D_{i,j}M = M^{\top}D_{i,j}M\rho(g)^\top.\end{equation}
It then follows that for any $g\in G$, 
\begin{equation}\hat{\rho}(g)^\top \hat{X}\hat{\rho}(g) = \hat{X}\hat{\rho}(g)^\top\hat{\rho}(g) = \hat{X}. \end{equation}

To conclude, we need to show that if $\cB \in \bbR^{n \times n \times n_3}$ is a symmetric tensor and $\hat{B}$ is its matrix representative, then $\langle \cB,\cX \rangle = \tr(\hat{B}\hat{X})$. Note that if $\ba,\bb \in \bbR^{1\times 1 \times n_3}$ are tubes, then, because $M$ is orthogonal, 
\begin{align}
\begin{split}
\langle \ba,\bb\rangle&= \langle M\vec(\ba), M\vec(\bb)\rangle\\
&= \tr(\diag(M\vec(\ba))\diag(M\vec(\bb)))\\
&= \tr(M^{\top}\diag(M\vec(\ba))MM^{\top}\diag(M\vec(\bb))M). 
\end{split}
\end{align}

It then follows that 
\begin{align}
\begin{split}
\langle \cB,\cX \rangle
&= \sum_{i = 1}^n \left(\sum_{k = 1}^n\langle \bb_{i,k}, \bx_{k,i}\rangle\right)\\
 &= \sum_{i=1}^n\left(\sum_{k = 1}^n\tr(M^{\top}\diag(M\vec(\bb_{i,k}))MM^{\top}\diag(M\vec(\bx_{k,i}))M)\right)\\
&= \tr(\hat{B}\hat{X}).
\end{split}
\end{align}
\end{proof}

We conclude this section with a simple example to highlight the connection between $M$-SDPs and invariant matrix SDPs.

\begin{example}\label{ex:Invariant_SDP} 
Consider the symmetric group $S_2 = \langle e, \sigma \mid \sigma^2 = e \rangle$. 
Let $\rho:S_2 \to GL_2(\bbR)$ be the representation with 
    \begin{align}
        \rho(\sigma) = \begin{bmatrix} 0 & 1\\ 1 & 0 \end{bmatrix}. 
    \end{align}
The decomposition of $\Rbb^2$ into irreducible representations is $\Rbb^2 \simeq \spann_\bbR\{(1,1)\} \oplus \spann_\bbR\{(1,-1)\}$.
Note that each irreducible has multiplicity 1. 

Consider the orthogonal matrix $H = \alpha \begin{bmatrix} 1 & 1 \\ 1 & -1\end{bmatrix}$, where $\alpha = \frac{1}{\sqrt{2}}$. 
Since $H\rho(\sigma)H^\top = \begin{bmatrix}1 & 0 \\ 0 & -1 \end{bmatrix}$ is diagonal, it follows from Theorem \ref{thm:Equivariance} that $T_\ba$ is $\rho$-equivariant for all tubes $\ba \in \bbR^{1\times 1 \times 2}$. That is, with notation as in the statement of Theorem \ref{thm:InvariantSDP}, we have $U = \bbR^{2\times 2 \times 2}$ and therefore $\mathrm{PSD}^2_H \cap U = \mathrm{PSD}^2_H$.

Consider the following tensor $\cX$ of format $2\times 2\times 2$ with variable entries $x_1,x_2,y_1,y_2$ and its image in the transform domain $\widehat{\cX} = \cX \times_3 H$ given by 
    \begin{subequations}
    \begin{align}
        \cX_{:,:,1} &= \begin{bmatrix}
            x_1 & 1 \\ 1 & y_1
        \end{bmatrix}, 
         &    \cX_{:,:,2} &= \begin{bmatrix}
            x_2 & 1 \\ 1 & y_2
        \end{bmatrix}\\
    \widehat{\cX}_{:,:,1} &= \begin{bmatrix}
            \alpha(x_1 + x_2) & 2\alpha \\ 2\alpha & \alpha(y_1 + y_2)
        \end{bmatrix},  \quad \text{and}
         &     
         \widehat{\cX}_{:,:,2} &= \begin{bmatrix}
            \alpha(x_1 - x_2) & 0 \\ 0 & \alpha(y_1 - y_2)
        \end{bmatrix}.
    \end{align}
    \end{subequations}
We will show that the corresponding $M$-SDP
\begin{equation}\label{eq:Example_MSDP_Invariant}
\begin{aligned}
\min_{(x_1,x_2),(y_1,y_2)} \, & \langle \cI, \cX \rangle
\qquad \text{s.t. } \qquad \cX \succeq_H 0
\end{aligned}
\end{equation}
is invariant under the group action of $S_2$, where $\Ical$ is the facewise identity tensor defined in \Cref{sec:notation}.

By Proposition \ref{prop:MSDP_block_variable}, we can rewrite \eqref{eq:Example_MSDP_Invariant} using block matrices with blocks from the transform domain resulting in the equivalent $M$-SDP
\begin{equation}\label{eq:Example_MSDP_Invariant_Transform}
\begin{aligned}
\min_{(x_1,x_2),(y_1,y_2)} \, & \langle \widehat{\cI}, \widehat{\cX} \rangle  \qquad 
\text{s.t. } \, \qquad \begin{bmatrix}\widehat{\cX}_{:,:,1}\\
& \widehat{\cX}_{:,:,2}\end{bmatrix}\succeq 0.
\end{aligned}
\end{equation}
Because the frontal slices in the transform domain are symmetric matrices, the block diagonal matrix $\bdiag(\widehat{\Xcal})$ is a symmetric matrix that is invariant under the induced action $\hat{\rho}(\sigma) = I_2 \otimes \rho(\sigma)$. Moreover, the objective function $\langle \widehat{\cI}, \widehat{\cX} \rangle = \langle \cI,\cX \rangle = x_1 + x_2 + y_1 + y_2$ is invariant under $\hat{\rho}$, which acts as permutations $(x_i,y_i) \mapsto (x_{g(i)},y_{g(i)})$ for $g \in S_2$. Since the objective function and feasible region of \eqref{eq:Example_MSDP_Invariant_Transform} are both invariant under $\hat{\rho}$, the problem \eqref{eq:Example_MSDP_Invariant_Transform} is an invariant SDP.  

Note that the decomposition of $\bbR^4$ into irreducible representations of $S_2$ consists of the trivial representation and the sign representation, each with multiplicity 2, and the corresponding blocks in \eqref{eq:Example_MSDP_Invariant_Transform} are $2\times 2$.  

\end{example}

\section{Applications}\label{sec:Applications}

In this section, we describe two applications of the $M$-SDP framework. Our first application is a description of certain group invariant sums of squares polynomials. Our other application is an $M$-SDP formulation of low rank tensor completion problems. These applications highlight that the $M$-SDP framework has theoretical and computational applications analogous to those of PSD matrices.  

\subsection{Application 1: $M$-SOS Polynomials}\label{sec:MSOS}

A well-known application of semidefinite programming is as a relaxation of polynomial optimization problems. 
The central idea is to search for nonnegativity certificates of a polynomial $f$ in the form of a decomposition of $f$ into a sum of squares (SOS); that is, if $\deg(f) = 2d$, one seeks degree-$d$ polynomials $g_1,\dots,g_k$ such that $f = \sum_{i=1}^k g_i^2$; see, e.g., \cite{lasserre_global_2001, parrilo_semidefinite_2003} for additional details. 
The maximum value of $\gamma$ such that $f(x)-\gamma$ is SOS is then a (possibly strict) lower bound on $\inf_{x \in \bbR^n} f(x)$. 
Additionally, a polynomial $f(x)-\gamma$ is SOS if and only if there is a positive semidefinite Gram matrix $Q$ such that $f(x) - \gamma = \xi^\top Q \xi$, where $\xi$ is a vector of all monomials of degree at most $d$. Searching for the maximum $\gamma$ such that $f(x)-\gamma$ is SOS is therefore a semidefinite programming problem. 
Here, we use $\starM$-PSD tensors to study a subset of SOS polynomials. 

SOS relaxations for structured polynomial optimization problems were studied using the tensor $t$-product in \cite{zheng_unconstrained_2022,marumo_t-semidefinite_2024} and we extend to the general $\starM$-product below. 
The group-theoretic interpretation developed in Section \ref{sec:Equivariance} provides further insight into the structure of SOS polynomials defined through $\starM$-products. 

It will be convenient to define a reshaping operator that partitions an $nn_3\times 1$ vector into $n$ blocks of length $n_3$ and converts the blocks into tubes of a lateral slice.  
More precisely, we define $\fold_{n_3}: \Rbb^{nn_3} \to \Rbb^{n\times 1\times n_3}$ such that if $v = \begin{bmatrix} v_1^\top & v_2^\top & \dots & v_{n}^\top \end{bmatrix}^\top$, where $v_i \in \bbR^{n_3}$ for each $i \in [n]$, then 
\begin{equation}\label{eq:fold_k}
\fold_{n_3}(v) = \begin{bmatrix}\tube(v_1) \\ \tube(v_2) \\ \vdots \\\tube(v_n)\end{bmatrix} \in \bbR^{n \times 1 \times n_3}.\end{equation}

We now define an M-SOS polynomial using the $\starM$-product and the folding operation in~\eqref{eq:fold_k}.  
\begin{defn}[$M$-SOS Polynomial, Gram Tensor]
Consider the ring of polynomials in $k$ variables with real coefficients and collect all possible monomials up to degree $d$ in the length $N = {k+d \choose d}$ vector $\xi = \begin{bmatrix} 1 & x_1 & x_2 & \dots &x_k & x_1^2 & x_1x_2 & \dots & x_k^d \end{bmatrix}^\top$. 
Let $N = n n_3$ for some positive integers $n$ and $n_3$ and let $\Xcal = \fold_{n_3}(\xi)$ be an $n\times 1\times n_3$ array of variables. 
 Given $M \in \Ogroup_{n_3}(\bbR)$, we say that a real polynomial $f$ in $k$ variables of degree at most $2d$ is \emph{$M$-SOS} if there is a tensor $\cQ \in \mathrm{PSD}_M^n$ such that 
    \begin{equation}f(x) = \langle \cX, \cQ \starM \cX\rangle.\end{equation}
Such a tensor $\cQ$ is called a \emph{Gram tensor} for $f$.
\end{defn}

We note that $M$-SOS polynomials are sums of squares polynomials. Indeed, if $\cQ$ is a Gram tensor for $f$, then by Proposition \ref{prop:square_root},  $\cQ = \cB^\top \starM \cB$ for some tensor $\cB \in \bbR^{r \times n \times n_3}$. It then follows that 
\begin{equation}
f(x) = \sum_{i =1}^{r} \langle B_{i,:,:} \starM \cX, B_{i,:,:} \starM \cX\rangle = \sum_{i =1}^n \left\|B_{i,:,:} \starM \cX\right\|^2,
\end{equation}
which is a sum of squares. Alternatively, the matrix representative of $\cQ$ is PSD by Proposition \ref{prop:MPSD_BlockMat}. 

\subsubsection{Invariant Polynomials} There has been recent interest in sums of squares polynomials which are invariant under a group action on the variables; see e.g., \cite{gatermann_symmetry_2004, riener_exploiting_2013, heaton_symmetry_2021}. In this setting, the theory of invariant SDPs is leveraged to better understand the SDPs which certify that an invariant SOS polynomial is a sum of squares. 

Since the results of Section \ref{sec:Equivariance} relate the $\starM$-product to invariant SDPs, we work towards an analogy in the tensor case. Specifically, Theorem \ref{thm:InvariantSDP} shows that if $\rho:G \to GL_{n_3}(\bbR)$ is a totally real representation, then an $M$-SDP with variables in $\mathrm{PSD}_M^m$ and equivariance constraints on tubes leads to an SDP invariant with respect to the representation on $\bbR^{mn_3}$ where $g \in G$ acts by $(I_m \otimes \rho(g))$. So, we expect that if $f$ is an $M$-SOS polynomial with a Gram tensor which has tubes representing $\rho$-equivariant transformations, then $f$ will be invariant under the action of $(I_m \otimes \rho(g))$ for $g \in G$. Unwinding the Kronecker product structure, this means that $f$ is invariant under the representation $\rho$ acting on \emph{groups} of variables of size $n_3$. While the existence of a Gram tensor satisfying equivariance constraints on tubes results in an invariant polynomial, the converse is not true without an additional assumption on the multiplicity of the irreducible representations appearing in the decomposition of $\bbR^{n_3}$.

\begin{restatable}{theorem}{QuadraticForms}\label{thm:QuadraticForms}
    Let $G$ be a finite group and $\rho:G \to GL_{n_3}(\bbR)$ be a totally real representation with each $\rho(g)$ orthogonal, and set $M \in \mathrm{O}_{n_3}(\bbR)$ and $W_\rho\subseteq \bbR_M$ to be as in the statement of Theorem \ref{thm:EquivariantSubspace}. 

    Consider a quadratic form in $mn_3$ variables $f \in \bbR[x_{1,1}, x_{1,2}, \ldots, x_{m,n_3}]_2$ and collect all variables in the vector $ \xi = \begin{bmatrix} x_{1,1} & x_{1,2} & \ldots & x_{1,n_3} & x_{2,1} & \dots & x_{m,n_3} \end{bmatrix}^\top$.
    
    If $f$ is $M$-SOS and there is a Gram tensor $\cQ$ for $f$ with tubes in $W_\rho$, then $f$ is SOS and invariant under the action of $G$ on $\bbR[x_{1,1},x_{1,2},\ldots, x_{m,n_3}]$ given by $g \cdot \xi = (I_m \otimes \rho(g))\xi$. The converse holds if the multiplicity of each irreducible representation in the decomposition of $\bbR^{n_3}$ appears with multiplicity at most one.  
\end{restatable}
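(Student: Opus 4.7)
The plan is to prove the two directions separately. For the forward direction, assume that $\cQ$ is a Gram tensor for $f$ with every tube in $W_\rho$. Applying \Cref{prop:square_root} yields a factorization $\cQ = \cB^\top \starM \cB$, so
\begin{equation}
f(\xi) = \langle \cX, \cQ \starM \cX \rangle = \langle \cB \starM \cX, \cB \starM \cX\rangle = \sum_{i = 1}^{r}\|(\cB \starM \cX)_{i,1,:}\|^2,
\end{equation}
and each tubal norm expands to a sum of squares of real polynomials in $\xi$, so $f$ is SOS. For invariance, I would first note that the action $g \cdot \xi = (I_m \otimes \rho(g))\xi$ corresponds on the tensor side to $g \cdot \cX = \cX \times_3 \rho(g)$, since $\fold_{n_3}$ intertwines the block-diagonal Kronecker action with the mode-$3$ product. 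Because every tube of $\cQ$ lies in $W_\rho$, \Cref{thm:EquivariantSubspace} gives the equivariance identity $\cQ \starM (g\cdot \cX) = g \cdot (\cQ \starM \cX)$. Combining this with \Cref{lem:Trace_transpose}(3) and the orthogonality of $\rho(g)$ yields $f(g \cdot \xi) = \langle g\cdot \cX, g\cdot (\cQ \starM \cX)\rangle = \langle \cX, \cQ \starM \cX\rangle = f(\xi)$.

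For the converse, assume each irreducible appears with multiplicity at most one. Start from any PSD Gram matrix $Q$ for $f$ and apply the Reynolds operator to produce an invariant Gram matrix
\begin{equation}
\tilde Q \;=\; \tfrac{1}{|G|}\sum_{g\in G} \hat\rho(g)^\top Q\, \hat\rho(g), \qquad \hat\rho(g) = I_m \otimes \rho(g).
\end{equation}
Then $\tilde Q$ is symmetric and PSD, commutes with $\hat\rho(g)$ for every $g$, and $\xi^\top \tilde Q \xi = f(\xi)$ because $f$ is $G$-invariant. Partitioning $\tilde Q$ into an $m\times m$ array of $n_3\times n_3$ blocks $\tilde Q_{i,j}$, the identity $\hat\rho(g)\tilde Q = \tilde Q \hat\rho(g)$ forces each block to commute with every $\rho(g)$. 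The multiplicity-one hypothesis combined with Schur's Lemma (\Cref{lem:Schur}), expressed in the symmetry-adapted basis fixed by $M$, shows that the commutant of $\rho$ consists of matrices whose conjugate by $M$ has the block scalar form $\mathrm{diag}(c_1 I_{d_1}, \ldots, c_m I_{d_m})$. In particular, $M \tilde Q_{i,j} M^\top$ is diagonal with precisely the block-constant structure that characterizes $W_\rho$ in \Cref{thm:EquivariantSubspace}.

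The final step is to assemble the desired Gram tensor. Define tubes $\bfa_{i,j}$ by taking $M\vec(\bfa_{i,j})$ to be the diagonal of $M\tilde Q_{i,j} M^\top$, so that $\diag(M\vec(\bfa_{i,j})) = M \tilde Q_{i,j} M^\top$ and, using orthogonality of $M$, the matrix representative of $\bfa_{i,j}$ is exactly $\tilde Q_{i,j}$. Each $\bfa_{i,j} \in W_\rho$ by construction. Symmetry of $\cQ$ follows because $M\tilde Q_{i,j} M^\top$ is diagonal, hence equals its own transpose, which forces $\tilde Q_{j,i} = \tilde Q_{i,j}^\top = \tilde Q_{i,j}$ and therefore $\bfa_{j,i} = \bfa_{i,j}$. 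Let $\cQ$ denote the tensor with tubes $\bfa_{i,j}$. Then $\mymat_M(\cQ) = (I_m\otimes M)\tilde Q(I_m\otimes M^\top)$, so $(I_m\otimes M^\top)\mymat_M(\cQ)(I_m\otimes M) = \tilde Q \succeq 0$, and \Cref{prop:MPSD_BlockMat} gives $\cQ \in \myPSD_M^m$. The same matrix identification furnishes $\langle \cX, \cQ \starM \cX\rangle = \xi^\top \tilde Q \xi = f(\xi)$, so $\cQ$ is a Gram tensor. The main obstacle is the correct application of Schur's Lemma in the presence of the real structure on $\rho$: the multiplicity-one assumption is exactly what is needed to identify the commutant of $\rho$ with the span of matrix representatives of $W_\rho$, since with higher multiplicity the intertwiners on a single isotypic component fail to be scalar and the commutant is strictly larger than what can be realized by tubes in $W_\rho$.
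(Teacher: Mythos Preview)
Your argument is correct and follows the same overall architecture as the paper: both directions rest on the block structure of the matrix representative and on Schur's lemma applied to the $n_3\times n_3$ blocks under the multiplicity-one hypothesis. There are two differences worth flagging. In the forward direction, you establish invariance at the tensor level by combining the equivariance $\cQ\starM(g\cdot\cX)=g\cdot(\cQ\starM\cX)$ with the orthogonal invariance of the inner product (\Cref{lem:Trace_transpose}(3)); the paper instead passes immediately to the matrix representative $\hat Q=(I_m\otimes M^\top)\mymat_M(\cQ)(I_m\otimes M)$ and checks $(I_m\otimes\rho(g))^\top\hat Q(I_m\otimes\rho(g))=\hat Q$ directly, as in the proof of \Cref{thm:InvariantSDP}. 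For the converse, you average an arbitrary PSD Gram matrix via the Reynolds operator to obtain an invariant one, whereas the paper simply asserts the existence of an invariant PSD Gram matrix---this is legitimate because a quadratic form has a \emph{unique} symmetric Gram matrix (half the Hessian), so invariance of $f$ forces invariance of $Q$ automatically. Your averaging step is therefore unnecessary in the quadratic-form setting, but it is the correct device if one wishes to extend the statement to higher-degree $M$-SOS polynomials, where Gram matrices are no longer unique.
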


\begin{proof}

As in the proof of Theorem \ref{thm:InvariantSDP}, we note that if $\cQ$ is a Gram tensor for $f$ with tubes in $W_\rho$ and 

\begin{align}
\hat{Q} = (I_m \otimes M^{\top}) \mymat_M(\cQ)(I_m \otimes M)
\end{align}
is the matrix representative of the underlying linear transformation, then $(I_m \otimes \rho(g))^\top \hat{Q}(I_m \otimes \rho(g)) = \hat{Q}$ for each $g \in G$. In particular, it follows that 

\begin{equation}(g\cdot f)(x) = ((I_m\otimes\rho(g))\xi)^\top \hat{Q} ((I_m\otimes\rho(g))\xi)  = \xi^\top \hat{Q}\xi = f(x).\end{equation}

Conversely, suppose that $f$ is SOS and invariant under the action of $G$. Then, there is a $(mn_3)\times (mn_3)$ positive semidefinite matrix $Q$ such that $f(x) = \xi^\top Q \xi$ and $(I_m \otimes \rho(g))^\top Q (I_m \otimes \rho(g)) = Q$. Denote blocks of $Q$ by 

\begin{equation}Q = \begin{bmatrix}Q_{1,1} & Q_{1,2} & \dots & Q_{1,m}\\ Q_{2,1} & Q_{2,2} & \dots & Q_{2,m}\\ \vdots & \vdots 
 & \ddots & \vdots \\ Q_{m,1} & Q_{m,2} & \dots & Q_{m,m}\end{bmatrix},\end{equation}
where each $Q_{i,j}$ is $n_3\times n_3$. Now, for each $i,j \in [m]$ and each $g \in G$, $\rho(g)^\top Q_{i,j}\rho(g) = Q_{i,j}$ since $(I_m \otimes \rho(g))^\top Q (I_m \otimes \rho(g)) = Q$. By hypothesis, the decomposition of $\bbR^{n_3}$ into irreducibles is given by 

\begin{equation}\bbR^{n_3} \simeq V_1 \oplus V_2 \oplus \dots \oplus V_s,\end{equation}
where each $V_i$ is unique. Since $M$ corresponds to a symmetry adapted basis of $\bbR^{n_3}$ it follows from Schur's Lemma (Lemma \ref{lem:Schur}) that 

\begin{equation}MQ_{i,j}M^{-1} = \begin{bmatrix} c_1I_{d_1} & & &\\ & c_2I_{d_2} & &\\ & &\ddots &\\ & & & c_sI_{d_s} \end{bmatrix}\end{equation}
where $d_t = \dim V_t$. By Theorem \ref{thm:EquivariantSubspace}, there exists a tube $\bq_{i,j} \in W_\rho$ such that $MQ_{i,j}M^{-1} = \diag(M \vec(\bq_{i,j}))$. So, there is a tensor $\cQ \in \bbR^{m \times m}_M$ with 

\begin{equation}Q = (I_m \otimes M^{\top}) \mymat_M(\cQ)(I_m \otimes M)\end{equation}
and such that $f = \langle \cX_1^{(n_3)}, \cQ \starM \cX_1^{(n_3)}\rangle$. 
\end{proof}

\subsubsection{Examples} We conclude this section with examples demonstrating Theorem \ref{thm:QuadraticForms} and its limitations. 

\begin{example}
 Let $S_3$ be the symmetric group on three elements and $\rho:S_3 \to GL_3(\bbR)$ be the permutation representation. Set
    \begin{center}
    
    \begin{equation}M = \begin{bmatrix}    -\frac{1}{\sqrt{3}} &    -\frac{1}{\sqrt{3}} &    -\frac{1}{\sqrt{3}}\\   \frac{1}{\sqrt{2}} &   0 &  \frac{-1}{\sqrt{2}}\\   \frac{1}{\sqrt{6}} &   -\frac{2}{\sqrt{6}} &   \frac{1}{\sqrt{6}}\end{bmatrix},  \enskip \bq_1 = (-\sqrt{3})\begin{bmatrix} 1\\ 1\\ 1\end{bmatrix}, \text{ and } \bq_2 = (\sqrt{18})\begin{bmatrix} -1\\ \sqrt{3}\\ 1\end{bmatrix}\end{equation}

    \end{center}

    The map $T_\ba:\bbR_M \to \bbR_M$ is $\rho$-equivariant if and only if $\ba \in \spann_\bbR\{\bq_1,\bq_2\}$. We construct an $M$-SOS quadratic form $f \in \bbR[x_1,x_2,x_3,y_1,y_2,y_3]$ as 
    
    \vspace{0.5cm}
    
        \begin{equation}\begin{aligned} f(x,y) &= \left\langle \begin{bmatrix} { x} \\ {  y} \end{bmatrix}, \left(\begin{bmatrix} \bq_1 + \bq_2\\ \bq_2 \end{bmatrix}\starM\begin{bmatrix} \bq_1 + \bq_2 & \bq_2 \end{bmatrix}\right) \starM \begin{bmatrix} { x}\\ {  y} \end{bmatrix} \right\rangle\\
        &= \left({ (-3 - \sqrt{2})x_1 + (3-\sqrt{2})x_2 + (3 - \sqrt{2})x_3} -{   (4 + \sqrt{2})y_1 + (2-\sqrt{2})y_2 + (2-\sqrt{2})y_3}\right)^2\\
        &+ \left({ (3 - \sqrt{2})x_1 + (-3 - \sqrt{2})x_2 + (3 - \sqrt{2})x_3} + {  (2-\sqrt{2})y_1  - (4 + \sqrt{2})y_2 + (2-\sqrt{2})y_3}\right)^2\\
        &+ \left({ (3 - \sqrt{2})x_1 + (3-\sqrt{2})x_2 + (-3 - \sqrt{2})x_3} + {   (2-\sqrt{2})y_1 + (2-\sqrt{2})y_2  - (4 + \sqrt{2})y_3} \right)^2
        \end{aligned}\end{equation}

        Note that $f$ is invariant under 

    \vspace{-0.5cm}
    
    \begin{equation}({     x_1,x_2,x_3}, {    y_1,y_2,y_3}) \mapsto ({     x_{g(1)},x_{g(2)},x_{g(3)}}, {    y_{g(1)},y_{g(2)}, y_{g(3)}}) \text{ for }  g \in S_3.\end{equation}
    
\end{example}

We also show that the condition that each irreducible representation appears in the decomposition of $\bbR^{n_3}$  with multiplicity at most one is necessary via an example.

\begin{example}\label{ex:HighMult_not_MSOS}
Let $S_2 = \{id,\sigma\}$, and $\rho:S_2\to GL_3(\bbR)$ be the representation with $\rho(\sigma) = \begin{bmatrix}1 & 0 &0\\ 0 & 0 & 1\\ 0 & 1 &0 \end{bmatrix}$. That is, $\sigma$ acts on vectors by swapping the second and third coordinates. Note that the decomposition of $\bbR^3$ into irreducible representations is given by

\begin{equation}\bbR^3 \simeq \spann \{(1,0,0)\} \oplus \spann \{(0,1,1)\} \oplus \spann\{(0,1,-1)\},\end{equation}
and that the trivial representation appears with multiplicity $2$, given by $\spann\{(1,0,0)\}$ and $\spann \{(0,1,1)\}$.  

Let $\alpha = \frac{1}{\sqrt{2}}$  and $M = \begin{bmatrix} 1 & 0& 0\\ 0 & \alpha & \alpha \\ 0 & \alpha & -\alpha\end{bmatrix}$. Note that an SOS quadratic form $f \in \bbR[x_1,x_2,x_3]_2$ is $M$-SOS if and only if a Gram matrix $Q$ for $f$ satisfies the condition that $MQM^{-1}$ is diagonal.

Let 

\begin{equation}f(x) = (x_1 + x_2 +x_3)^2 + (x_2 - x_3)^2 = x_1^2 + 2x_2^2 + 2x_3^2 + 2x_1x_2 + 2x_1x_3.\end{equation}
The first square $(x_1 + x_2 + x_3)^2$ is the square of the sum of elements lying in each copy of the trivial representation of $S_2$. The unique Gram matrix for $f$ is $Q = \begin{bmatrix}1 & 1 & 1\\ 1 & 2 & 0\\ 1 & 0 & 2 \end{bmatrix}$. Now, 

\begin{equation}MQM^{-1} = \begin{bmatrix}1 & \sqrt{2} & 0\\ \sqrt{2} & 2 & 0\\ 0 & 0 & 2\end{bmatrix}\end{equation}
is not diagonal. So, there is no tube $\bq \in \bbR^{1\times 1 \times 3}$ such that $Q = M^{-1}\diag(M\vec(\bq))M$ and therefore $f$ cannot be $M$-SOS. 
\end{example}

\subsection{Application 2: Low Rank Tensor Completion}\label{sec:TensorCompletion}

A common application of the $\starM$-product is low-rank completion of tensors by minimizing the tensor nuclear norm (e.g., \cite{kong_tensor_2021, zhang_corrected_2019, zhang_exact_2017, zhang_novel_2014}). In this setting, we consider a $\starM$-nuclear norm on $\bbR^{n_1\times n_2 \times n_3}$ which depends on the choice of orthogonal matrix $M$~\cite{song_robust_2020}: 
\begin{equation}\label{eq:M_Nuclear_Norm}
\|\cX\|_{M,*} = \sum_{k = 1}^{n_3} \|(\cX \times_3 M)_{:,:,k}\|_{*},
\end{equation}
where for a matrix $A$ with singular values $\sigma_1\geq \sigma_2 \geq \ldots \geq \sigma_r \geq 0$, $\|A\|_* = \sum_{j = 1}^r \sigma_j$ is the nuclear norm. It is known that the value of the matrix nuclear norm of $A$ can be computed via an SDP (see e.g., \cite{recht_guaranteed_2010} \cite[Section 2.1]{blekherman_semidefinite_2012}), with value given by the following primal-dual pair:

\begin{equation}\label{eq:NN_SDP}
\begin{aligned}
\max_{Y}\, & \tr(A^\top Y) & \qquad \qquad & \min_{W_1,W_2}\,  \frac{1}{2}\left(\tr W_1 + \tr W_2\right)\\
\text{s.t. } &\begin{bmatrix} I & Y \\ Y^\top & I\end{bmatrix}\succeq 0 & \qquad \qquad & \text{s.t. } \begin{bmatrix} W_1 & A\\ A^\top & W_2 \end{bmatrix} \succeq 0
\end{aligned}
\end{equation}

\subsubsection{$M$-SDP Formulation of \eqref{eq:M_Nuclear_Norm}} We show below that the SDP formulation of the matrix nuclear norm generalizes to compute \eqref{eq:M_Nuclear_Norm} using an $M$-SDP.
This will allow us to formulate the tensor completion problem as an $M$-SDP. 

\begin{proposition}\label{prop:MNN_as_p_SDP}
    Fix an $M \in \Ogroup_{n_3}(\bbR)$ and let $\cA\in \Rbb^{n_1\times n_2\times n_3}$. The $M$-nuclear norm $\|\cA\|_{M,*}$ can be computed by solving the following $M$-SDP
    \begin{equation}
\label{eq:NN_MSDP}\min_{\cW_1, \cW_2} \frac{1}{2}\left\langle \begin{bmatrix} \cI_M & 0\\ 0 & \cI_M \end{bmatrix}, \begin{bmatrix} \cW_1 & \cA\\ \cA^\top & \cW_2\end{bmatrix}\right\rangle  \quad  \subjectto \quad  \begin{bmatrix} \cW_1 & \cA\\ \cA^\top & \cW_2\end{bmatrix} \succeq_M 0
\end{equation}
for $\cW_1\in \Rbb^{n_1\times n_1\times n_3}$ and $\cW_2\in \Rbb^{n_2\times n_2\times n_3}$. 
\end{proposition}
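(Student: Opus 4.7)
The plan is to reduce \eqref{eq:NN_MSDP} to $n_3$ independent matrix nuclear-norm SDPs in the transform domain and invoke the classical result \eqref{eq:NN_SDP} slicewise. This is exactly the setup of \Cref{prop:MSDP_block_variable}, so I expect the proof to be essentially a careful bookkeeping exercise.

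First I would pass to the transform domain by applying $\times_3 M$ to the block tensor variable. By \Cref{prop:slices}, the PSD constraint
\begin{equation}
\begin{bmatrix} \cW_1 & \cA\\ \cA^\top & \cW_2\end{bmatrix} \succeq_M 0
\end{equation}
is equivalent to requiring that each frontal slice of its image in the transform domain is a PSD matrix. Since $\times_3 M$ acts tubewise and $\starM$-transposition corresponds to facewise transposition in the transform domain (\Cref{def:StarM_Transpose}), this condition becomes
\begin{equation}
\begin{bmatrix} \widehat{\cW}_1(:,:,k) & \widehat{\cA}(:,:,k)\\ \widehat{\cA}(:,:,k)^\top & \widehat{\cW}_2(:,:,k)\end{bmatrix} \succeq 0 \qquad \text{for each } k\in[n_3].
\end{equation}

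Next I would decouple the objective. The key observation is that $\widehat{\cI_M}$ has frontal slices equal to the identity matrix: indeed, $\cI_M$ has diagonal tubes $\bfe_M = \tube(M^{-1}\mathbbm{1}_{n_3})$, so $\bfe_M \times_3 M = \tube(\mathbbm{1}_{n_3})$ and each slice of $\widehat{\cI_M}$ is $I_n$. Combining this with \Cref{lem:Trace_transpose}(3) and the facewise formula \eqref{eq:inner_prod_facewise}, the objective splits as
\begin{equation}
\tfrac{1}{2}\langle \cI_M, \cW_1\rangle + \tfrac{1}{2}\langle \cI_M, \cW_2\rangle
= \tfrac{1}{2}\sum_{k=1}^{n_3}\Bigl(\tr\bigl(\widehat{\cW}_1(:,:,k)\bigr) + \tr\bigl(\widehat{\cW}_2(:,:,k)\bigr)\Bigr).
\end{equation}

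Since both the constraint and the objective fully decouple across $k$, applying \Cref{prop:MSDP_block_variable} reduces \eqref{eq:NN_MSDP} to $n_3$ independent matrix SDPs, the $k$-th of which is exactly the dual nuclear-norm SDP \eqref{eq:NN_SDP} applied to $\widehat{\cA}(:,:,k)$. Its optimal value is $\|\widehat{\cA}(:,:,k)\|_*$, so summing over $k$ yields
\begin{equation}
\sum_{k=1}^{n_3}\|\widehat{\cA}(:,:,k)\|_* = \|\cA\|_{M,*}
\end{equation}
by the definition \eqref{eq:M_Nuclear_Norm}. The main potential obstacle is the bookkeeping that identifies $\widehat{\cI_M}$ correctly and confirms the transpose of $\cA$ maps cleanly to the transposed slice under $\times_3 M$; both follow directly from definitions, so no genuinely hard step is anticipated.
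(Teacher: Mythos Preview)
Your proposal is correct and follows essentially the same route as the paper: both arguments pass to the transform domain, use \Cref{prop:slices} to decouple the PSD constraint slicewise, use orthogonality of $M$ (via \Cref{lem:Trace_transpose}(3)) together with $\widehat{\cI_M}$ being the facewise identity to decouple the objective, and then invoke the matrix nuclear-norm SDP \eqref{eq:NN_SDP} on each slice. The only cosmetic difference is packaging: you appeal to \Cref{prop:MSDP_block_variable} to justify the decoupling, whereas the paper writes out the two inequalities explicitly (constructing the slicewise optimizers and then showing any feasible pair $(\cW_1',\cW_2')$ gives objective at least $\|\cA\|_{M,*}$). One small caution when citing \Cref{prop:MSDP_block_variable}: its hypothesis requires the affine constraints to have a single nonzero frontal slice in the transform domain, so you should note that fixing the off-diagonal block to $\cA$ can equivalently be written as fixing $\widehat{\cA}$ slicewise, which puts the constraints in the required form.
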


\begin{proof}
Let $\widehat{\cA} = \cA\times_3 M$ and let $W_1^{(\ell)} \in \Rbb^{n_1\times n_1}$ and $W_2^{(\ell)} \in \Rbb^{n_2\times n_2}$ solve the independent matrix SDPs
    \begin{align}
        \min_{W_1,W_2}\, & \tfrac{1}{2}\left(\tr W_1 + \tr W_2\right) \quad \text{s.t.} \, \begin{bmatrix} W_1 & \widehat{\cA}_{:,:,\ell}\\ (\widehat{\cA}_{:,:,\ell})^\top & W_2 \end{bmatrix} \succeq 0
    \end{align}
for each $\ell\in [n_3]$. 
Define $\widehat{\cW}_1$ such that $(\widehat{\cW}_1)_{:,:,\ell} = W_1^{(\ell)}$ for $\ell\in [n_3]$ and let $\cW_1 = \widehat{\cW}_1 \times_3 M^\top$. 
Follow a similar procedure to construct $\cW_2$. 
It follows from~\Cref{prop:slices} that $\begin{bmatrix} \cW_1 & \cA\\ \cA^\top & \cW_2 \end{bmatrix}$ is $M$-PSD. 
Moreover, the corresponding objective value is 
\begin{subequations}\begin{align}
\frac{1}{2}\left\langle \begin{bmatrix} \cI_M & 0\\ 0 & \cI_M \end{bmatrix}, \begin{bmatrix} \cW_1 & \cA\\ \cA^\top & \cW_2\end{bmatrix}\right\rangle &= \frac{1}{2}\left\langle \begin{bmatrix} \cI_M \times_3 M & 0\\ 0 & \cI_M\times_3 M \end{bmatrix}, \begin{bmatrix} \cW_1 & \cA\\ \cA^\top & \cW_2\end{bmatrix} \times_3 M\right\rangle\\
&= \frac{1}{2}\sum_{\ell = 1}^{n_3}\left\langle \begin{bmatrix} I & 0\\ 0 & I\end{bmatrix}, \begin{bmatrix} W_1^{(\ell)} & \widehat{\cA}_{:,:,\ell}\\ (\widehat{\cA}_{:,:,\ell})^\top & W_2^{(\ell)} \end{bmatrix} \right\rangle\\
&= \frac{1}{2}\sum_{\ell = 1}^{n_3}(\tr W_1^{(\ell)} + \tr W_2^{(\ell)})\\
&= \sum_{\ell = 1}^{n_3} \|\widehat{\cA}_{:,:,\ell}\|_*\\
&= \|\cA\|_{M,*}.
\end{align} 
\end{subequations}

To show the $\starM$-nuclear norm is indeed the solution, consider  any tensors $\cW_1^{\prime} \in \Rbb^{n_1\times n_1\times n_3}$ and $\cW_2^{\prime} \in \Rbb^{n_2\times n_2 \times n_3}$ such that $\begin{bmatrix} \cW_1^{\prime} & \cA\\ \cA^\top & \cW_2^{\prime}\end{bmatrix}$ is M-PSD. 
By Proposition \ref{prop:slices}, any $M$-PSD tensor with block format yields feasible solutions to \eqref{eq:NN_SDP} by taking frontal slices in the transform domain. Moreover, the corresponding objective value satisfies
\begin{subequations}\begin{align}\frac{1}{2}\left\langle \begin{bmatrix} \cI_M & 0\\ 0 & \cI_M \end{bmatrix}, \begin{bmatrix} \cW_1^\prime  & \cA\\ \cA^\top & \cW_2^\prime \end{bmatrix}\right\rangle 
&= \frac{1}{2} \sum_{\ell = 1}^{n_3} (\tr (\cW_1^\prime \times_3 M)_{:,:,\ell} + \tr(\cW_2^\prime \times_3 M)_{:,:,\ell})\\
&\geq \sum_{\ell = 1}^{n_3} \|(\cA \times_3 M)_{:,:,\ell}\|_*\\
&= \|\cA\|_{M,*}.
\end{align}\end{subequations}

Thus, solving \eqref{eq:NN_MSDP} computes $\|\cA\|_{M,*}$. 
\end{proof}

\subsubsection{Tensor Completion} We now consider the tensor completion problem. A tensor $\cY \in \bbR^{n_1 \times n_2 \times n_3}$ is \emph{partially specified} if the entries $\cY_{i,j,k}$ are known for $(i,j,k) \in \Omega \subseteq \bbN^3$ and unknown $\cY_{i,j,k}$ for $(i,j,k)\not \in \Omega$. We seek a low $M$-rank solution to the tensor completion problem, using the $M$-nuclear norm \eqref{eq:M_Nuclear_Norm} as a proxy for rank in the objective function:

\begin{equation}\label{eq:MREC}
\min_{\cA} \|\cA\|_{M,*} \, \text{ s.t. } \, \cA_{i,j,k} = \cY_{i,j,k} \text{ for all } (i,j,k) \in \Omega.
\end{equation}

We rewrite \eqref{eq:MREC} as an $M$-SDP:

\begin{equation}\label{eq:Fixed_M_NNSDP}
\min_{\cA, \cW_1, \cW_2} \frac{1}{2}\left\langle \begin{bmatrix} \cI_M & 0 \\ 0 & \cI_M\end{bmatrix}, \begin{bmatrix} \cW_1 & \cA\\ \cA^\top & \cW_2\end{bmatrix}\right\rangle \quad \text{s.t.} \quad \cA_{i,j,k} = \cY_{i,j,k} \text{ for all } (i,j,k)\in \Omega, \, \begin{bmatrix} \cW_1 & \cA\\ \cA^\top & \cW_2\end{bmatrix} \succeq_M 0. 
\end{equation}
Note that if $\cY \in \bbR^{n_1\times n_2 \times n_3}$, then the tensors involved in \eqref{eq:Fixed_M_NNSDP} have sizes $\cW_1 \in \bbR^{n_1 \times n_1 \times n_3}, \cA \in \bbR^{n_1 \times n_2 \times n_3}, $ and $\cW_2 \in \bbR^{n_2 \times n_2 \times n_3}$.

\section{Numerical Results for Tensor Completion via $M$-SDP}\label{sec:Completion_Experiments}

We present a tensor completion under the $\starM$-product for two different datasets. 
We leverage the Python package \texttt{cvxpy} \cite{diamond_cvxpy_2016,agrawal_rewriting_2018,agrawal_disciplined_2019} to solve $M$-SDP. 
We use the default algorithm called the splitting conic solver (SCS)~\cite{odonoghue_conic_2016}, which uses first-order derivative information via operator splitting (\'{a} la alternating direction method of multipliers~\cite{boyd_distributed_2010}) to scalably solve SDPs. 
All code and reproducible experiments are available at \url{https://github.com/elizabethnewman/tsdp}.

\SetKwComment{Comment}{\% }{ }

\begin{algorithm}[t]
\caption{$M$-SDP for Tubally Sparse Tensor Completion}\label{alg:msdp_tensor_completion}
\KwData{partially-observed tensor $\cY \in \Rbb^{n_1\times n_2\times n_3}$,  
    observed tubal entries indices $\Omega \subset [n_1] \times [n_2]$, 
    orthogonal $M\in \Rbb^{n_3}$}
\KwResult{completed tensor $\cA\in \Rbb^{n_1\times n_2\times n_3}$ with $\cA_{i,j,:} = \cY_{i,j,:}$ for $(i,j)\in \Omega$}

$\widehat{\cY} = \cY \times_3 M$ \Comment*[r]{move to transform domain (only multiply along observed tubes)}
  
\For{$k = 1,2,\ldots n_3$}{

Solve the $(n_1 + n_2) \times (n_1 + n_2)$ SDP  
\begin{align*}
    \min_{W_1,W_2,X} \tr(W_1) + \tr(W_2)
    \quad \text{s.t.} 
    \quad \begin{bmatrix}
    W_1 & X\\
    X^\top & W_2
\end{bmatrix} \succeq 0 \text{ and } X_{i,j} = \widehat{\cY}_{i,j,k} \text{ for } (i,j)\in \Omega
\end{align*}

$\widehat{\cA}_{:,:,k} \gets X$\;
}
$\cA = \widehat{\cA} \times_3 M^\top$ \Comment*[r]{return to spatial domain}
\end{algorithm}

\subsubsection{\bf Parameters} We compare the performance of $M$-SDP for various choices of orthogonal transformation, including the identity matrix, $I$, the discrete cosine transform matrix, $C$, the Haar wavelet matrix, $H$, and the (transposed) left-singular matrix of the mode-$3$ unfolding of $\cY$, $U_3^\top$ (for details, see~\cite{kilmer_tensor-tensor_2021}), and a random orthogonal matrix $Q$. 
We will compare to an equivalent matrix SDP formulation where we first unfold the tensor into a matrix $A\in \Rbb^{n_1n_3\times n_2}$ by concatenating the frontal slices of the tensor vertically; that is, $Y = \begin{bmatrix} \cY_{:,:,1}^\top & \cdots & \cY_{:,:,n_3}^\top \end{bmatrix}^\top$.  

\subsubsection{\bf Metrics} 
We measure performance via relative error on the fully-observed dataset using the max-norm; specifically, 
    \begin{align}
        \frac{\|\cA - \cY\|_{\max}}{\|\cY\|_{\max}}
    \end{align}
where $\|\cB \|_{\max} = \max_{i,j,k} |\cB_{i,j,k}|$.

\subsubsection{\bf Computational Cost} We summarize the $M$-SDP pipeline in~\Cref{alg:msdp_tensor_completion}. 
Computationally, an $M$-SDP requires solving $n_3$ independent  $(n_1 + n_2) \times (n_1 + n_2)$ matrix SDPs; thus, these small SDPs can be solved in parallel. 
In comparison, the comparable matrix SDP formulation requires a single solve of an $(n_1n_3 + n_2) \times (n_1n_3 + n_2)$ matrix SDP. 
The computational cost of solving an SDP is dominated by the projection onto the convex cone, which requires solving a linear system.

\subsection{Video Data}\label{sec:video_data}

We solve the tensor completion problem~\eqref{eq:Fixed_M_NNSDP} with the classical escalator video\footnote{The link to download a {\sc Matlab} file containing the escalator data can be found at \url{https://cvxr.com/tfocs/demos/rpca/}.}. 
The data consists of a surveillance video of pedestrians taking an escalator recorded with a stationary camera. 
The original data is consists of video frames of size $130 \times 160$ with $200$ frames.  
For computational efficiency using the basic SDP algorithms built into \texttt{cvxpy}, we crop and subsample the frames. 
We create two datasets of different sizes. 
For the small dataset, we create a tensor $\cY$ of size $32 \times 32 \times 8$, corresponding to $\text{height} \times \text{width}\times \text{frames}$.   
We store every sixth frame in the first $48$ frames. 
For the large dataset, we create a tensor $\cY$ of size $64 \times 64 \times 128$, storing the first $128$ consecutive frames. 
In both cases, we crop the image in the center to capture foreground activity (i.e., frames include pedestrians, not just background). 
We normalize the data via $\cY / \|\cY\|_F$.  
To form a partially-observed dataset, we randomly generate a single set of indices $\Omega \in [\text{height}] \times [\text{width}]$ and retain only $25\%$ of the original data points; i.e., $|\Omega| / (\text{height}\times \text{width}) = 0.25$. 
We observe the same set of indices for each video frame, resulting in a tubally-sparse representation of the video data. 
We report our results on the small dataset in~\Cref{fig:escalator_approximation_small} and on the large dataset in~\Cref{fig:escalator_approximation_large}.

\begin{figure}
    \centering
    \subfloat[Approximations for tensor and matrix completion. 
    The top row show the frames of the cropped, subsampled data $\cY_{:,:,j}$. 
    The second row shows the masked frames, using the same mask for each frame, where dark colors indicate unobserved entries. 
    The next five rows show the completed data using $M$-SDP for various choices of fixed transformation $M$. 
    The last row shows the matrix completion comparison. 
    All images in a given column are displayed on the same colorscale. \label{fig:video_small_approx}]
    {\begin{tikzpicture}
    \draw[opacity=0] (-0.5\linewidth,0) -- (0.5\linewidth,0);
    \node {\includegraphics[width=0.8\linewidth]{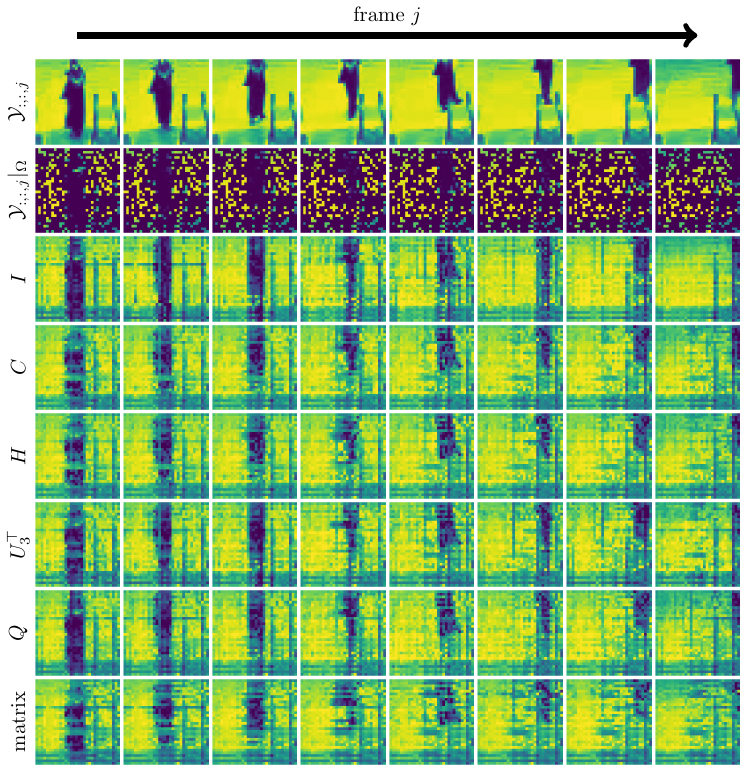}};
    \end{tikzpicture}}

    \subfloat[Relative error for completion of escalator video. 
    We report the worst-case relative error per frame using the max-norm, which returns the largest error over all possible pixels. 
     \label{fig:video_small_error}]
    {\begin{tikzpicture}
    \draw[opacity=0] (-0.5\linewidth,0) -- (0.5\linewidth,0);
    \node {\includegraphics[width=0.7\linewidth]{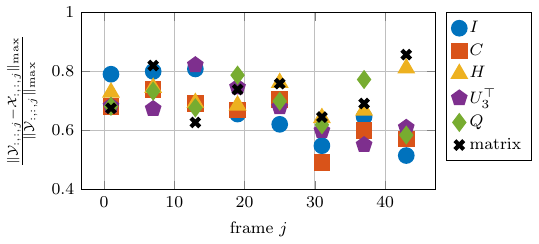}};
    \end{tikzpicture}}

    \caption{Tensor and matrix completion for small video data set $\cY$ of size $32\times 32\times 8$.}
    \label{fig:escalator_approximation_small}
\end{figure}

In the small-scale data experiment in~\Cref{fig:escalator_approximation_small}, we observe that the choice of $M$ does not significantly impact performance. 
We further note that the tensor completion strategies often performed slightly better than matrix completion when examining the error per frame in~\Cref{fig:video_small_error}. 
We compare the max-norm relative errors instead of the Frobenius norm because all methods performed similarly under the Frobenius norm. 
The similar performance for both the matrix and tensor completion indicates that exploiting multilinear structure does not sacrifice quality of approximation and can bring new computational advantages.

\begin{figure}
    \centering
    \subfloat[Approximations for tensor completion. 
    The top row shows every eighth frame in the first $64$ frames of the data. 
    Subsequent frames contain only background information. 
    The second row shows the masked frames, using the same mask for each frame where dark colors indicate unobserved entries. 
    The last five rows show the completed data for various choices of fixed transformation $M$. 
    Images in a particular column are displayed using the same colorscale. \label{fig:video_large_approx}]
    {\begin{tikzpicture}
    \draw[opacity=0] (-0.5\linewidth,0) -- (0.5\linewidth,0);
    \node {\includegraphics[width=0.8\linewidth]{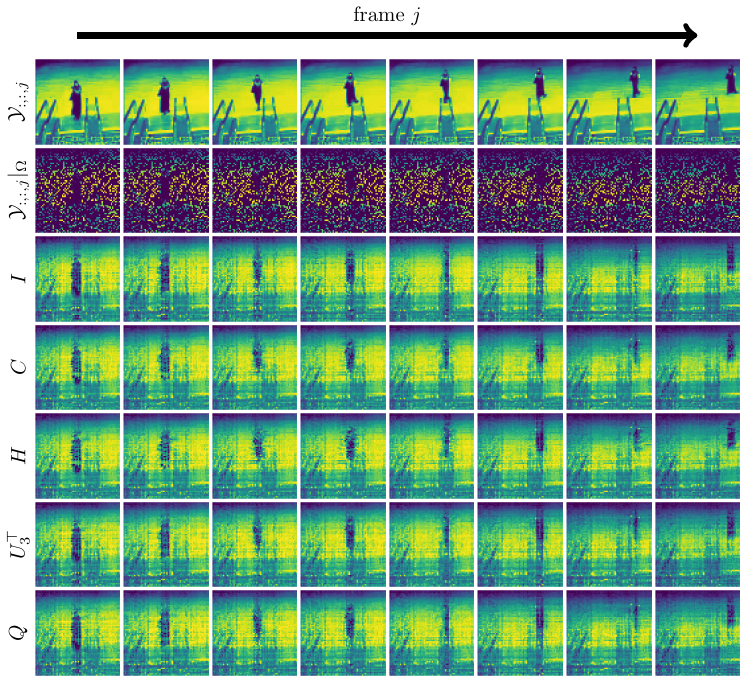}};
    \end{tikzpicture}}

    \subfloat[Relative error for completion of escalator video. 
    We display the worst-case relative error per frame using the max-norm, which returns the largest error over all possible pixels. \label{fig:video_large_error}]
    {\begin{tikzpicture}
    \draw[opacity=0] (-0.5\linewidth,0) -- (0.5\linewidth,0);
    \node {\includegraphics[width=1\linewidth]{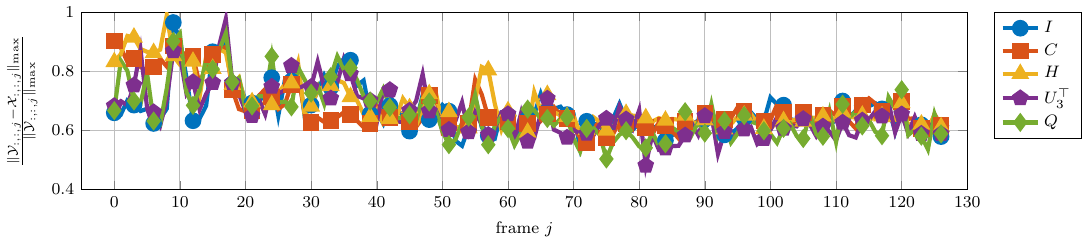}};
    \end{tikzpicture}}
    
    \caption{Tensor completion for large video data set $\cY$ of size $64\times 64\times 128$.}
    \label{fig:escalator_approximation_large}
\end{figure}

For the large-scale data experiment in~\Cref{fig:escalator_approximation_large}, we again observe that the choice of $M$ does not significantly impact performance for video completion. 
We do not compare to the matrix case because the basic \texttt{cvxpy} implementations cannot support such large data structures. 
This is a key computational advantage of the $M$-SDP framework. 
After applying the transformation, we effectively solve a small-scale matrix completion problems per frontal slice. 
This both significantly reduces the size of the problem and enables the problems to be solved in parallel. 

\subsection{Hyperspectral Data}\label{sec:hyperspectral}

We solve the tensor completion problem~\eqref{eq:Fixed_M_NNSDP} for the Indian Pines hyperspectral dataset~\cite{baumgardner_220_2015}, originally downloaded from {\sc Matlab}'s Hyperspectral Imaging Toolbox\footnote{The dataset be can loaded in {\sc Matlab}  using \texttt{hcube = hypercube('indian\_pines.dat'); A = hcube.DataCube;}}. 
Hyperspectral imaging measures the energy of a region for various light spectrum wavelengths. 
The absorption and reflection of each wavelength gives insight into the material. 
The Indian Pines dataset is a tensor $\cY$ of size $145 \times 145\times 220$, corresponding to $\text{height} \times \text{width} \times \text{wavelength}$. 
Each $\text{height} \times \text{width}$ frame is a birds eye view of farmland. 
We normalize the data via $\cY / \|\cY\|_F$.  
We again randomly generate a single $\text{height}\times \text{width}$ mask, retaining only $25\%$ of the original data points. 
We report our results in~\Cref{fig:hyperspectral_data}.

\begin{figure}
    \centering
    \subfloat[Approximations for tensor completion. 
    The top row shows the original frames, the second row shows the masked frames, using the same mask for each frame where dark colors indicate unobserved entries, and the last four rows show the completed data for various choices of fixed transformation $M$. 
    The first column to shows an RGB rendering of the image using bands (R,G,B) = (26, 16, 8). 
    The remaining columns show a false color scale rendering of a particular frame of the hyperspectral data, chosen to highlight similarities and differences in relative error. 
    Each false color scale is the same for a given column. \label{fig:hyperspectral_approx}]
    {\begin{tikzpicture}
    \draw[opacity=0] (-0.5\linewidth,0) -- (0.5\linewidth,0);
    \node {\includegraphics[width=1\linewidth]{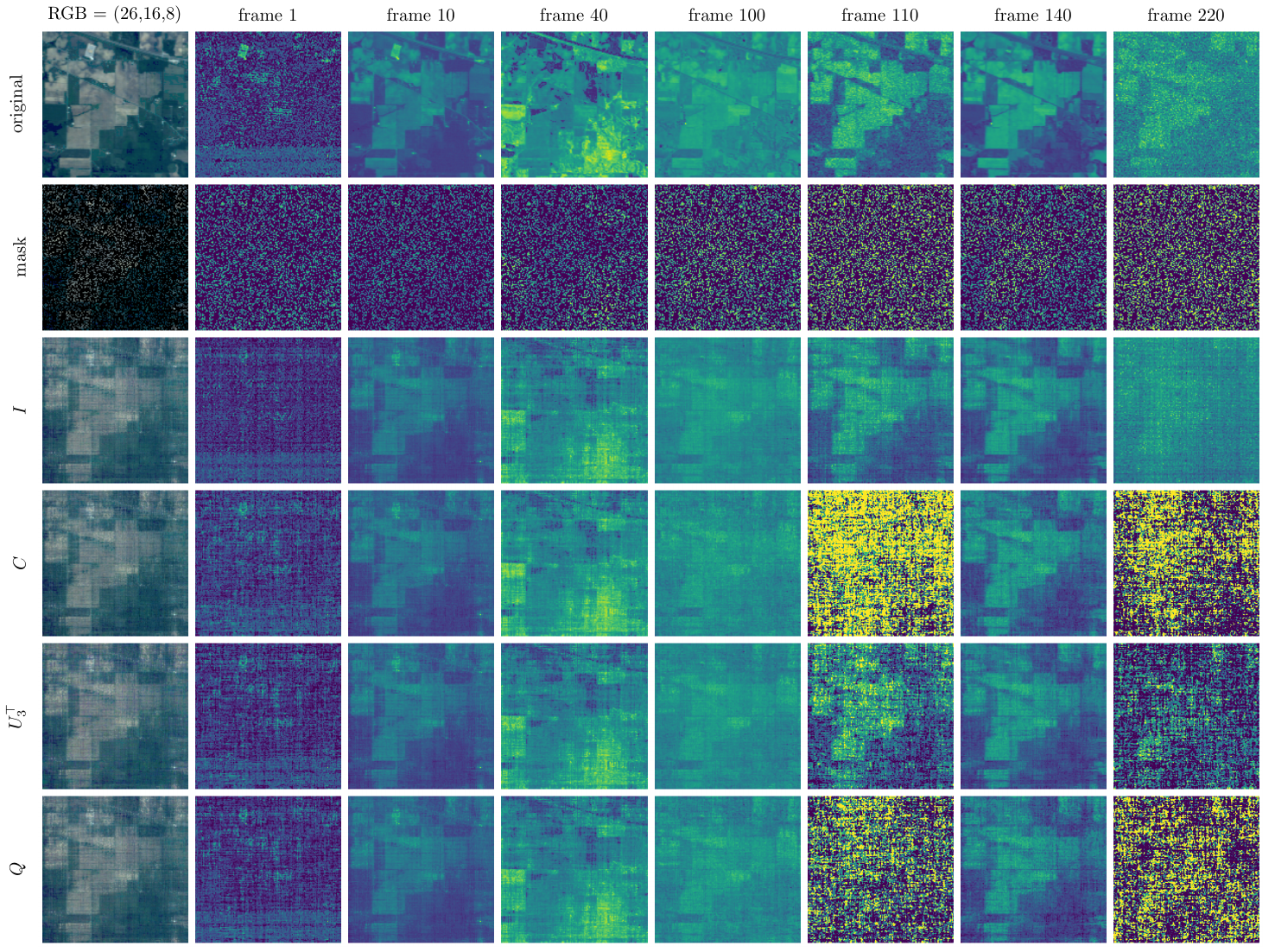}};
    \end{tikzpicture}}

    \subfloat[Relative error for completion of hyperspectral imaging data. 
    We display the worst-case relative error per frame using the max-norm, which returns the largest error over all possible pixels. 
    We see similar patterns for the Frobenius norm error. \label{fig:hyperspectral_error}]
    {\begin{tikzpicture}
    \draw[opacity=0] (-0.5\linewidth,0) -- (0.5\linewidth,0);
    \node {\includegraphics[width=1\linewidth]{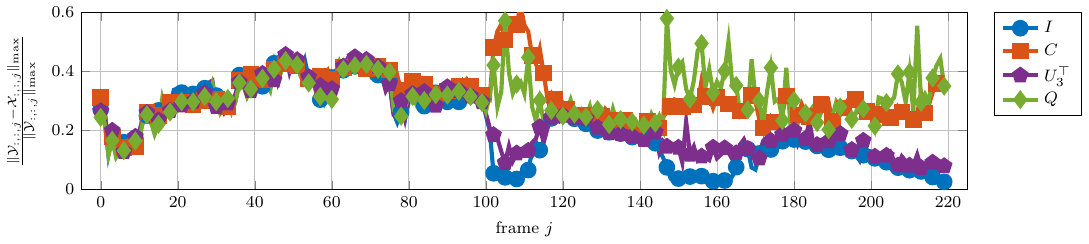}};
    \end{tikzpicture}}
    
    \caption{Tensor completion for hyperspectral data set $\cY$ of size $145\times 145\times 220$.}
    \label{fig:hyperspectral_data}
\end{figure}

In~\Cref{fig:hyperspectral_approx}, we examine the tensor completion of the hyperspectral data. 
While the RGB images formed from the various transformations appear similar, the per-frame approximations demonstrate that the identity matrix $I$ qualitatively produces the closest approximation to the original frame, followed by the data-dependent transformation $U_3^\top$. 
In~\Cref{fig:hyperspectral_error}, we see that for the first approximately $100$ frames, all methods produce similar errors. 
Then, we start to see a gap where the DCT matrix $C$ and random matrix $Q$ perform significantly worse than the identity $I$ and data-dependent $U_3^\top$ cases. 

\section{Conclusions and Future Work}

In this paper, we developed the theory of $\starM$-PSD tensors and the corresponding semidefinite programming problems. Additionally, we connected the $\starM$-product of third order tensors to the representation theory of finite groups, explicitly relating the choice of transformation matrix $M$ and underlying group action. This connection allowed us to interpret $M$-semidefinite programming problems in terms of group invariant SDP. The framework of $\starM$-semidefinite programming has applications which are analogous to those of standard semidefinite programming, and we use the framework to describe sums of squares quadratic forms and tensor completion problems. Proof-of-concept computations involving tensor completion problems suggest that the $M$-SDP framework is able to scale to larger problems than a na\"ive matrix SDP approach and that some problems have a non-trivial dependence on the choice of transformation matrix $M$. 

In our computational experiments, we used an out of the box solver in \texttt{cvxpy} \cite{diamond_cvxpy_2016}. Future work could develop specialized numerical methods for $M$-SDP which take advantage of tensor structure. Additionally, an interesting avenue to pursue is a bilevel optimization framework for problems such as tensor completion, where the inner level is an $M$-SDP and the outer level is optimization over the transformation matrix $M$. 

\section{Acknowledgements}

The work by E. Newman was partially supported by the National Science Foundation
(NSF) under grants [DMS-2309751] and [DE-NA0003525].

Sandia National Laboratories is a multimission laboratory managed and operated by National Technology \& Engineering Solutions of Sandia, LLC, a wholly owned subsidiary of Honeywell International Inc., for the U.S. Department of Energy’s National Nuclear Security Administration under contract DE-NA0003525.
This paper describes objective technical results and analysis. Any subjective views or opinions that might be expressed in the paper do not necessarily represent the views of the U.S. Department of Energy or the United States Government.

\appendix

\section{Constructing Transformation Matrices}

The choices of transformation matrix $M$ used in our numerical experiments in Section \ref{sec:Completion_Experiments} are summarized in Table \ref{tab:transformations}.

\begin{table}
\centering
\caption{Transformations}
\label{tab:transformations}

\scriptsize
\begin{tabular}{|lcl|}
\hline
Name & Notation & Computation\\
\hline\hline
Identity & $I$ & \texttt{I = numpy.eye(n3)} \\
\hline
Discrete Cosine Transform & $C$ & \texttt{C = scipy.fftpack.dct(n3)} \\
\hline
Left-Singular Matrix & $U_3^\top$ & \texttt{U3, \_, \_ = np.linalg.svd(mode3unfold(Y), full\_matrices=False)}\\
\hline
Haar Wavelet & $H$ & adapted from \href{https://stackoverflow.com/questions/23869694/create-nxn-haar-matrix}{StackOverflow} \\
\hline
Random Orthogonal & $Q$ & \texttt{Q, \_ = numpy.linalg.qr(numpy.random.randn(n))} \\
\hline

\end{tabular}

\end{table}

\section{Proof of Lemma \ref{lem:Trace_transpose}}\label{sec:Appendix_Proof}

\begin{proof}
The proofs are calculations. Set $\cX_{i,1,:} = \bx_i$. 

For the first claim, set $\cA = \begin{bmatrix} \ba_1 & \ba_2 & \ldots & \ba_n\end{bmatrix}$ and $\cB = \begin{bmatrix} \bb_1 & \bb_2 & \cdots & \bb_n\end{bmatrix}$. Then,

\begin{subequations}\begin{align}
\langle \cA \starM \cX, \cB \starM \cX \rangle &= \left\langle \sum_{i = 1}^{n} \ba_{i} \starM \bx_i, \sum_{j = 1}^{n} \bb_j \starM \bx_i \right \rangle\\
&= \left\langle \sum_{i = 1}^{n} M^\top \diag(M\vec(\ba_{i}))M\vec(\bx_i), \sum_{j = 1}^{n} M^\top \diag(M\vec(\bb_{j}))M\vec(\bx_j) \right \rangle\\
&= \left(M^\top \sum_{i = 1}^{n}\diag(M\vec(\ba_{i}))M\vec(\bx_i)\right)^\top \left(M^\top \sum_{j = 1}^{n}\diag(M\vec(\bb_{j}))M\vec(\bx_j)\right)\\
&= \sum_{i =1}^n \vec(\bx_i)^\top \left(\sum_{j = 1}^{n}M^\top\diag(M\vec(\ba_{i}))\diag(M\vec(\bb_{j}))M\vec(\bx_j) \right)\\
&= \sum_{i =1}^n \vec(\bx_i)^\top \left(\sum_{j = 1}^{n}M^\top\diag\left[\diag(M\vec(\ba_{i}))M\vec(\bb_{j})\right]M\vec(\bx_j)\right)\\
&= \langle \cX, (\cA^\top \starM \cB)\starM \cX \rangle
\end{align}\end{subequations}

For the second claim, let $\cA_{i,j,:} = \ba_{i,j}$. Then,

\begin{subequations}\begin{align}
\langle \cX, \cA \starM \cX \rangle &= \sum_{i = 1}^n \vec(\bx_i)^\top \left(\sum_{j = 1}^n M^\top\diag(M\ba_{i,j})M\bx_j\right)\\
&= \sum_{i = 1}^n \sum_{j = 1}^n \vec(\bx_i)^\top M^\top \diag(M\vec(\bx_j))M\vec(\ba_{i,j})\\
&= \sum_{i = 1}^n\sum_{j = 1}^n \left(M^\top \diag(M \vec(\bx_i)M\vec(\bx_j))\right)^\top \vec(\ba_{i,j})\\
&= \langle \cA, \cX \starM \cX^\top \rangle.
\end{align}\end{subequations}

For the third claim, we compute

\[\langle \cA \times_3 M, \cB \times_3 M \rangle = \sum_{i = 1}^{n_1} \sum_{j = 1}^{n_2} (M\vec(\ba_{i,j})^\top (M\vec(\bb_{i,j})) = \sum_{i = 1}^{n_1}\sum_{j = 1}^{n_2} \vec(\ba_{i,j})^\top \vec(\bb_{i,j}) =\langle \cA, \cB \rangle.\]
\end{proof} 

\printbibliography
\end{document}